\newtheorem{theorem}{Theorem}
\newtheorem{proposition}[theorem]{Proposition}
\newtheorem{lemma}[theorem]{Lemma}
\newtheorem{corollary}[theorem]{Corollary}
\numberwithin{figure}{section}
\theoremstyle{definition}
\newtheorem{definition}[theorem]{Definition}
\newtheorem{remark}[theorem]{Remark}
\renewcommand{\H}{\mathcal{H}}
\newcommand{\X}{\mathcal{X}}
\newcommand{\Y}{\mathcal{Y}}
\newcommand{\Z}{\mathcal{Z}}
\newcommand{\PP}{\mathbb{P}}
\newcommand{\CC}{\mathbb{C}}
\newcommand{\RR}{\mathbb{R}}
\newcommand{\ZZ}{\mathbb{Z}}
\newcommand{\QQ}{\mathbb{Q}}
\renewcommand{\i}{i}
\newcommand{\oo}{\mathcal{O}}
\newcommand{\JJ}{\mathcal{J}}
\newcommand{\KK}{\mathcal{K}}
\newcommand*{\sheafhom}{\mathscr{H}\kern -.5pt om}
\newcommand*{\sheafext}{\mathscr{E}\kern -.5pt xt}
\numberwithin{equation}{section}
\numberwithin{theorem}{section}
\numberwithin{table}{section}
\numberwithin{table}{section}
\begin{document}
\bibliographystyle{amsalpha}
\title[Existence and compactness theory]{Existence and compactness theory for \\ ALE scalar-flat K\"ahler surfaces}
\author{Jiyuan Han}
\address{Department of Mathematics, Purdue University, West Lafayette,
IN, 47907}
\email{han556@purdue.edu}
\author{Jeff A. Viaclovsky}
\address{Department of Mathematics, University of California, Irvine, 
CA, 92697}
\email{jviaclov@uci.edu}
\thanks{The second author was partially supported by NSF Grant DMS-1811096.}
\date{April 18, 2019}
\begin{abstract}
Our main result in this article is a compactness result which states that a noncollapsed sequence of  asymptotically locally Euclidean (ALE) scalar-flat K\"ahler metrics on a minimal K\"ahler surface whose K\"ahler classes stay in a compact subset of the interior of the K\"ahler cone must have a convergent subsequence. 
As an application, we prove the existence of global moduli spaces of scalar-flat K\"ahler ALE metrics
for several infinite families of K\"ahler ALE spaces. 
\end{abstract}
\maketitle
\setcounter{tocdepth}{1}
\tableofcontents
\section{Introduction}
\label{intro}
\begin{definition}
An ALE K\"ahler surface $(X, g,J)$ is a K\"ahler manifold of complex dimension $2$ with the following property. There exists a compact subset $K\subset X$ and a diffeomorphism 
$\Psi: X \setminus K\rightarrow (\RR^4\setminus \overline{B})/\Gamma$, such that for each multi-index $\mathcal{I}$ of order~$|\mathcal{I}|$
\begin{align}
\partial^{\mathcal{I}}(\Psi_*(g)-g_{Euc}) = O(r^{-\mu-|\mathcal{I}|}),
\end{align}
as $r \rightarrow \infty$, where $\Gamma$ is a finite subgroup of ${\rm{U}}(2)$ containing no complex reflections, $B$ denotes a ball 
centered at the origin, and $g_{Euc}$ denotes the Euclidean metric. The real number $\mu$ is called the order of $g$.
\end{definition}
\begin{remark}
In this paper, henceforth $\Gamma$ will always be a finite subgroup of ${\rm{U}}(2)$ containing no complex reflections.
\end{remark}
Any ALE K\"ahler surface can be blown-down to a smooth minimal complex surface in its birational class, minimal in the sense that there is no
rational curve of self-intersection $-1$. Our interest lies in building canonical metrics on minimal ALE K\"ahler surfaces. Specifically,
we are interested in constructing a smooth family of ALE SFK (scalar-flat K\"ahler) metrics that corresponding to the versal deformation family of $\CC^2/\Gamma$. 
Before we discuss existence results, we will present our main theorem in this paper, which is a compactness result. 

In the following, if $(X,g)$ is an ALE metric and $\varphi$ is a smooth tensor of any type, we say that $\varphi \in C^{\infty}_{\delta}(X,g)$ if $\phi$ is smooth and 
$\nabla_g^{\mathcal{I}} \varphi = O(r^{\delta - |\mathcal{I}|})$ as $r \rightarrow \infty$, 
where $\mathcal{I}$ is any multi-index of length $|\mathcal{I}|$. 

\begin{definition}
\label{kcdef}
 Let $(X,J)$ be a K\"ahler surface with a smooth ALE K\"ahler metric $g_0$, with K\"ahler form $\omega_0$. For $-2 < \delta_0 < -1$, define
\begin{align}
\mathcal{P}(X,J, \omega_0, \delta_0) = \{ \omega | \ \omega {\text{ is K\"ahler form satisfying \ }} \omega - \omega_0 \in C^{\infty}_{\delta_0}(X,g_0)  \}.
\end{align}
The  {\textit{K\"ahler cone}} of $(X,J)$ with respect to $\omega_0$ and $\delta_0$ is 
\begin{align}
\label{cone}
\KK(X,J,\omega_0, \delta_0) := \{[\omega] \ | \ \omega \in \mathcal{P}(X,J, \omega_0, \delta_0) \},
\end{align}
where $[\omega]$ denotes the class of $\omega$ in $H^2(X,\RR)$. 
\end{definition}

Clearly, $\KK(X,J,\omega_0, \delta_0)$ is a convex subspace in the de Rham cohomology group $H^2(X,\RR)$.
We remark that if $J$ is Stein, then $\KK(X,J, \omega_0, \delta_0)$ is the entire space $H^2(X,\RR)$, but if there exist any holomorphic curves, then it is a proper subset. 
This is because the integral of the K\"ahler form over a holomorphic curve must be strictly positive since it is the area, but if there are no holomorphic 
curves, then there are no constraints. See the discussion in Remark~\ref{KahlerStein} for details.

\begin{definition} The {\textit{lower volume growth ratio}}  of $(X,g)$ is 
\begin{align}
\mathcal{V}(g) \equiv \inf_{x \in X} \inf_{0 < r < 1} \frac{Vol ( B_r(x,g))}{r^4}.
\end{align}
\end{definition}

The following is our main compactness theorem dealing with sequences of ALE SFK metrics with respect to a {\em{fixed}} complex structure. 
\begin{theorem}
\label{compactness}
Let $(X,J,g_0)$ be an ALE minimal K\"ahler surface, associated with an ALE coordinate of 
asymptotic rate $O(r^{\delta_0})$, $(-2<\delta_0<-1)$.
Let $\kappa_i \in \mathcal{K}(X,J, \omega_0, \delta_0)$ be a sequence with 
$\kappa_i \to \kappa_{\infty} \in  \mathcal{K}(X,J, \omega_0, \delta_0)$ as $i \to \infty$. 
If $g_i$ is a sequence of 
ALE SFK metrics with $\omega_i  \in  \mathcal{P}(X,J, \omega_0, \delta_0)$ 
satisfying 
\begin{enumerate}
\item $[\omega_i] = \kappa_i$,
\item there exists a constant $v>0$, independent of $i$,
such that $\mathcal{V}(g_i) > v$,
\end{enumerate}
then there exists a subsequence $\{j\} \subset \{ i \}$ and $\omega_{\infty} \in  \mathcal{P}(X,J, \omega_0, \delta_0)$ such that 
$\omega_j \to \omega_{\infty}$ in $C^{k,\alpha}_{\delta_0}(X,g_0)$ norm 
for any $k \geq 0$, $0 < \alpha < 1$, as $j \to \infty$, 
where $g_{\infty}$ is an ALE SFK metric satisfying $[\omega_{\infty}] = \kappa_{\infty}$.
\end{theorem}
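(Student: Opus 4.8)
The plan is to run a four-dimensional orbifold compactness argument, in the spirit of the convergence theory for noncollapsed four-manifolds with bounded $L^2$ curvature, and then to exclude every possible degeneration using the two hypotheses: the noncollapsing bound $\mathcal{V}(g_i) > v$, and the convergence $\kappa_i \to \kappa_\infty$ with $\kappa_\infty$ in the open K\"ahler cone $\mathcal{K}(X,J,\omega_0,\delta_0)$, so that $\kappa_\infty$ pairs strictly positively with the class of every compact holomorphic curve in $(X,J)$. I would begin by establishing a uniform $L^2$ curvature bound. Since $g_i$ is scalar-flat K\"ahler on a complex surface, its self-dual Weyl tensor vanishes, $W^+_{g_i} \equiv 0$; the Gauss--Bonnet and signature formulas for ALE four-manifolds --- whose boundary correction terms depend only on the fixed group $\Gamma$, and whose integrands converge because of the ALE decay --- then express $\int_X |\mathrm{Rm}_{g_i}|^2 \, dV_{g_i}$ as a fixed combination of $\chi(X)$, $\tau(X)$, and terms depending only on $\Gamma$, in particular independent of $i$. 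Furthermore $\mathrm{Vol}(X,g_i) = \tfrac12 \kappa_i^2 \to \tfrac12 \kappa_\infty^2 > 0$, so the total volume is bounded above and below uniformly in $i$. Together with $\mathcal{V}(g_i) > v$ this places the sequence in the setting of noncollapsed four-manifolds with bounded $L^2$ curvature and bounded volume; a preliminary argument using the ALE structure and the finiteness of the total energy confines all curvature concentration to a fixed compact subset of $X$.

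Next, using $\varepsilon$-regularity for the elliptic system satisfied by a scalar-flat K\"ahler metric (via Uhlenbeck gauge fixing, as for critical metrics in dimension four), I would conclude that, after passing to a subsequence, $(X, g_i, J)$ converges in the pointed $C^\infty$ Cheeger--Gromov topology, away from a finite set of points, to a complete scalar-flat K\"ahler ALE orbifold $(X_\infty, g_\infty, J_\infty)$ with finitely many isolated quotient singularities, with scalar-flat K\"ahler ALE orbifolds (bubbles) splitting off at the points of curvature concentration; the number of singular points and bubbles is bounded in terms of the energy bound and $v$, and removable-singularity theorems identify all the limiting objects as honest ALE orbifolds.

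The heart of the proof is to show that, under the hypotheses, no bubble forms and $X_\infty$ has no orbifold singularity. Suppose otherwise. By the structure theory of scalar-flat K\"ahler ALE surfaces, for $i$ large a neighborhood in $X$ of such a point is biholomorphic to a (partial) resolution of the relevant quotient singularity $\mathbb{C}^2/\Gamma'$, and hence contains a nonempty compact holomorphic curve $C \subset X$, namely a component of the exceptional set. Because $C$ is supported at the shrinking bubble scale $\varepsilon_i \to 0$, a rescaling argument gives $\mathrm{Area}_{g_i}(C) = O(\varepsilon_i^2) \to 0$; but $\mathrm{Area}_{g_i}(C) = \int_C \omega_i = \kappa_i([C]) \to \kappa_\infty([C])$, while $\kappa_\infty([C]) > 0$ since $\kappa_\infty$ lies in the open K\"ahler cone --- a contradiction. (The noncollapsing bound is used essentially here: it ensures that bubbles and the Cheeger--Gromov limit are genuinely four-dimensional and that a bubble carries a definite amount of volume, so that it really does contain such a curve.) Therefore there is no concentration at all; $\varepsilon$-regularity then yields uniform $C^{k,\alpha}$ bounds for $g_i$, hence for $\omega_i$, on every compact subset of $X$, and $X_\infty$ is diffeomorphic to $X$. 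I expect this step --- translating analytic degeneration into the algebro-geometric statement that some effective curve class acquires a vanishing period --- to be the main obstacle, since it requires both the classification of scalar-flat K\"ahler ALE surfaces and a careful volume/area accounting genuinely tied to the noncollapsing hypothesis.

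Finally, I would upgrade compact-subset convergence to convergence of the $\omega_i$ in the weighted norm $C^{k,\alpha}_{\delta_0}(X,g_0)$. Choosing closed two-forms $\eta_i \in C^\infty_{\delta_0}(X,g_0)$ representing $\kappa_i - [\omega_0]$ with $\eta_i \to \eta_\infty$, write $\omega_i = \omega_0 + \eta_i + i \partial \bar\partial \varphi_i$ with $\varphi_i$ normalized to decay at infinity. The scalar-flat equation $s(\omega_i) = 0$ then becomes a fourth-order elliptic equation for $\varphi_i$ whose linearization is the Lichnerowicz operator, a lower-order perturbation of $\Delta^2$ on the ALE end; since the weight $\delta_0 \in (-2,-1)$ is non-exceptional for that operator, weighted Schauder estimates together with a bootstrap give a bound on $\|\varphi_i\|_{C^{k,\alpha}_{\delta_0}(X,g_0)}$ uniform in $i$. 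Combining this with the interior convergence and Arzel\`a--Ascoli (interpolating the weights) produces a further subsequence with $\omega_i \to \omega_\infty$ in $C^{k,\alpha}_{\delta_0}(X,g_0)$ for every $k \ge 0$ and $\alpha \in (0,1)$, where $\omega_\infty \in \mathcal{P}(X,J,\omega_0,\delta_0)$; then $g_\infty$ is an ALE scalar-flat K\"ahler metric and, by continuity of periods, $[\omega_\infty] = \lim_i \kappa_i = \kappa_\infty$, which completes the proof. These weighted estimates near infinity are technically involved but follow a by-now standard weighted-Fredholm template once the interior convergence is in hand.
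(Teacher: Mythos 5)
Your proposal captures the correct overall shape of the argument --- uniform $L^2$ curvature bound from Gauss--Bonnet and the signature formula, noncollapsed Cheeger--Gromov convergence to an ALE SFK orbifold with bubbles, an area/period contradiction to rule out degeneration, and weighted estimates at the end --- and you are right to flag the bubble-exclusion step as the main obstacle. However, exactly at that step there is a genuine gap: you assert that ``by the structure theory of scalar-flat K\"ahler ALE surfaces, for $i$ large a neighborhood in $X$ of [a concentration] point is biholomorphic to a (partial) resolution of the relevant quotient singularity $\mathbb{C}^2/\Gamma'$, and hence contains a nonempty compact holomorphic curve $C\subset X$.'' This is precisely what has to be proved, and it does not follow from any off-the-shelf structure theory. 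Two separate things can go wrong. First, an ALE SFK bubble with $b_2>0$ could a priori be a Stein smoothing of $\mathbb{C}^2/\Gamma'$ (a Milnor-fiber type space) carrying no compact holomorphic curve at all; the paper excludes this only by invoking Lempert's results on embeddable CR structures on small perturbations of $S^3/\Gamma'$ to show each bubble in the bubble tree is a \emph{resolution} of the previous one. Second, even granting that the bubble is a resolution, the diffeomorphisms from Cheeger--Gromov convergence need not be anywhere near holomorphic, so a curve that looks holomorphic in the limit orbifold is not obviously $J$-holomorphic in the fixed $(X,J)$; the paper handles this by showing that the limit is birationally dominated by $X$ (Theorem \ref{CGCB}), after first establishing in Proposition \ref{uniform_rate} that the limit is even birationally equivalent to $X$ --- neither of which is automatic, since the convergence could ``jump'' the birational type. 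The argument for domination in turn hinges on showing there is no $(-1)$-curve in the minimal resolution of $X_\infty$ (Lemma \ref{no}), which is exactly where the minimality hypothesis on $X$ enters; you never invoke minimality, and without it the conclusion is actually false.

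Two further, smaller points. You also need an argument that the energy does not escape to infinity (assumption~(3) in Lemma \ref{uniform_diffeom} must be verified, not assumed) --- the paper establishes this via a rescaling-to-the-tangent-cone argument showing such an escape would force the rescaled limit to be flat $\mathbb{C}^2/\Gamma$, contradicting the energy quantum. And your remark $\mathrm{Vol}(X,g_i)=\tfrac12\kappa_i^2$ is not correct here: the total volume of an ALE surface is infinite, so the upper-volume control must be phrased differently (and is anyway not what is needed). The final weighted-norm bootstrap paragraph of your plan is sound and matches the paper in spirit.
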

For the definition of the weighted norm, see Section \ref{notations} below. A brief outline of the proof of Theorem \ref{compactness} is follows. First, we apply the compactness result of Tian-Viaclovsky \cite{TV2} to obtain an ALE SFK {\textit{orbifold}} limit $X_{\infty}$,  
in the pointed Cheeger-Gromov sense. In Section \ref{uniform} we will also show that the limit $X_{\infty}$ is birationally equivalent to $(X,J)$. Then, in Section \ref{remove} we will show that the limit space $X_{\infty}$ 
is moreover birationally dominated by $X$, that is, $X_{\infty}$ is a blow-down of $X$. The key point in this step is to show that there are no $(-1)$ curves in the minimal resolution of $X_{\infty}$, the proof of which uses crucially the minimality assumption on $X$.  Then in Section~\ref{bubbles}, using some key results of Lempert, we will show that in the ``bubble tree'' of each orbifold singularity in the limit space,
each bubble is biholomorphic to a {\textit{resolution}} of an orbifold singularity in the previous bubble.
This, together with a result of Laufer, implies the area contraction of a holomorphic curve, 
which contradicts with the non-degeneracy of the limiting K\"ahler class, and 
therefore the limit space must be a smooth ALE SFK metric. 
We remark that Theorem~\ref{compactness} in some sense can be viewed as a 
non-compact analogue 
of the main result in \cite{CLW}.

\begin{definition} For $(X,J,g_0)$ an ALE SFK K\"ahler surface, let
\begin{align}
\mathcal{V}(\mathcal{P}(J)) = \inf_{\substack{g \in \mathcal{P}(J) \\ R_g  =0}} \mathcal{V}(g),
\end{align}
where $\mathcal{P}(J) = \mathcal{P}(X,J, \omega_0, \delta_0)$.
\end{definition}
Our main existence result is the following. 
\begin{corollary} 
\label{corexist}
Let $(X,J,g_0)$ be as in Theorem \ref{compactness},
and assume that $g_0$ is SFK. If 
\begin{align}
\label{Vassump}
\mathcal{V}(\mathcal{P}(J)) > 0,
\end{align} 
then 
for any $\kappa \in \KK(X,J,\omega_0, \delta_0)$, there exists an ALE SFK metric $\omega \in \mathcal{P}(J)$
with $[\omega] = \kappa$. 
\end{corollary}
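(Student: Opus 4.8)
The plan is to deduce Corollary~\ref{corexist} from Theorem~\ref{compactness} by a continuity-method / openness-closedness argument along paths in the Kähler cone $\KK = \KK(X,J,\omega_0,\delta_0)$, starting from the given SFK base point $[\omega_0]$. Define
\[
S = \{\kappa \in \KK \ | \ \text{there exists an ALE SFK metric } \omega \in \mathcal{P}(J) \text{ with } [\omega] = \kappa\}.
\]
Since $\KK$ is convex (hence connected) and $[\omega_0] \in S$ by hypothesis, it suffices to show $S$ is both open and closed in $\KK$; then $S = \KK$, which is exactly the assertion. For closedness, let $\kappa_i \in S$ with $\kappa_i \to \kappa_\infty \in \KK$, and let $\omega_i \in \mathcal{P}(J)$ be the corresponding ALE SFK metrics. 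The one point that needs care is verifying hypothesis (2) of Theorem~\ref{compactness}: a uniform lower bound $\mathcal{V}(g_i) > v > 0$. This is where the standing assumption \eqref{Vassump}, $\mathcal{V}(\mathcal{P}(J)) > 0$, is used — by definition of $\mathcal{V}(\mathcal{P}(J))$ as the infimum of $\mathcal{V}(g)$ over scalar-flat Kähler metrics in $\mathcal{P}(J)$, every $g_i$ satisfies $\mathcal{V}(g_i) \geq \mathcal{V}(\mathcal{P}(J)) =: v > 0$. Theorem~\ref{compactness} then produces a subsequence converging in $C^{k,\alpha}_{\delta_0}$ to an ALE SFK metric $\omega_\infty \in \mathcal{P}(J)$ with $[\omega_\infty] = \kappa_\infty$, so $\kappa_\infty \in S$ and $S$ is closed.

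For openness, fix $\kappa \in S$ with SFK representative $\omega$, and consider nearby classes $\kappa' \in \KK$. The scalar-flat equation, after fixing the class and writing $\omega' = \omega + i\partial\bar\partial f$ (modulo the appropriate decay and the linearization of the scalar curvature operator), is an elliptic problem whose linearization at a scalar-flat Kähler metric is, up to lower-order terms, governed by the Lichnerowicz-type operator. On an ALE scalar-flat Kähler surface this operator is Fredholm between suitable weighted Hölder spaces, and its cokernel is controlled by holomorphic vector fields / the automorphism structure; in the setting at hand (with the decay rate $-2<\delta_0<-1$ and no complex reflections in $\Gamma$) one expects the relevant operator to be surjective onto the target, so the implicit function theorem gives, for every $\kappa'$ near $\kappa$, an ALE SFK metric in $\mathcal{P}(J)$ representing $\kappa'$. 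Thus $S$ is open. I would expect this openness step — i.e., the weighted-space Fredholm theory and the verification that there is no obstruction to deforming the scalar-flat condition while moving the Kähler class — to be the main technical obstacle, since it requires the mapping properties of the linearized operator on ALE manifolds (as developed, e.g., in the work this paper builds on) rather than anything in the compactness theorem itself.

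An alternative, and perhaps cleaner, route avoids proving openness from scratch: if the existence of a local deformation family of ALE SFK metrics near any given one — parametrized by an open subset of the Kähler cone — has already been established in the literature this paper relies on, then openness of $S$ is immediate from that local existence statement, and only the closedness argument above (which is genuinely the new input, via Theorem~\ref{compactness} and assumption \eqref{Vassump}) needs to be run. In either formulation, the structure is the same: connectedness of $\KK$, a non-empty $S$ containing $[\omega_0]$, openness from local deformation theory, and closedness from the compactness theorem together with the uniform volume lower bound supplied by \eqref{Vassump}. The cleanest exposition is therefore: state the set $S$, note $[\omega_0] \in S$ and $\KK$ connected, dispatch openness by citing the local existence/deformation result, and devote the real work to closedness, where the only subtlety is the one-line deduction of hypothesis (2) of Theorem~\ref{compactness} from the definition of $\mathcal{V}(\mathcal{P}(J))$.
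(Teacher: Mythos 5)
Your proposal is correct and takes essentially the same approach as the paper: a continuity method with openness supplied by the local deformation theory of ALE SFK metrics (the paper cites~\cite[Section~8]{HanViaclovsky}) and closedness by Theorem~\ref{compactness}, with assumption~\eqref{Vassump} providing precisely the uniform lower volume growth bound needed for hypothesis~(2). The only cosmetic difference is that the paper runs the argument along a line segment $g_{b,t}=(1-t)g_0+tg_{b,1}$, $t\in[0,1]$, whereas you use the whole cone $\KK$ together with its convexity to get connectedness; the two are equivalent.
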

This theorem is proved by using the continuity method.
Openness in the continuity method follows from the same method in \cite[Section~8]{HanViaclovsky}. Closedness follows from Theorem \ref{compactness}. 

\begin{remark}
The family of ALE SFK metrics constructed by the continuity method 
depends upon the initial metric we choose, but otherwise
does not depend upon the specific value of $\delta_0$ for $-2 < \delta_0 < -1$.
\end{remark}

\begin{remark} In certain examples, we can prove the non-collapsing condition required in Corollary \ref{corexist} by using a topological argument; we will discuss these examples in Subsection~\ref{introex} below.
\end{remark}
\subsection{General  existence results}In order to state our next result, we need to recall some theory regarding the deformations of $\CC^2/\Gamma$. 
By a classical theorem of Grauert \cite{Grauert1972}, (and see \cite{Elkik} for the algebraic version) there exists a (mini)versal deformation
$\Y\rightarrow Der(\Y_0)$ of $\CC^2/\Gamma$, 
such that any deformation of $\CC^2/\Gamma$ 
over a complex space germ can be obtained by a pull-back morphism
from the versal deformation, on the level of germs (see \cite{Greuel} for the complete definition of versality). 
Furthermore, there is a natural $\CC^*$-action on $Der(\Y_0)$,  which lifts to a $\CC^*$-action on $\Y$ (which is of negative weight, see \cite[Section 2]{Pinkham78}). 
The complex space germ $Der(\Y_0)$ can be reducible in general.
Let $r+1$ denote the number of irreducible components, 
and denote each irreducible component by $Der_k(\Y_0)$, $k\in \{0,\ldots,r\}$.
By \cite{KollarShepherd} and \cite{BehnkeChristophersen}, for each irreducible component,
there exists a unique $P$-resolution $Z^P_k\rightarrow \Y_0$, a unique $M$-resolution $Z^M_k\rightarrow Z^P_k$, and finite base changes
$Der'(Z^M_k)\rightarrow Der'(Z^P_k)\rightarrow Der_k(\Y_0)$.
Using the $\CC^*$-action, we can extend $Der'(Z^M_k), Der'(Z^P_k), Der_k(\Y_0)$ 
to global analytic spaces $\JJ^M_k,\JJ^P_k, \JJ_k$, which are
bases spaces of deformations $\X_k, \Z_k, \Y_k$, respectively, 
and the total spaces admit $\CC^*$-actions such that the 
following diagram is $\CC^*$-equivariant
\begin{equation}
\begin{tikzcd}
\label{JJ_k}
\X_k \arrow{r}{} \arrow[swap]{d}{} & \Z_k \arrow{r}{} \arrow[swap]{d}{} & {\Y}_k \arrow{d}{} \\
\JJ^M_k \arrow{r}{} & \JJ^P_k \arrow{r}{} & \JJ_k.
\end{tikzcd}
\end{equation}
Define global base spaces 
\begin{align}
\label{bases}
\JJ^M = \cup_{0\leq k\leq r} \JJ^M_k, \ \JJ^P = \cup_{0\leq k\leq r}\JJ^P_k, \ \JJ = \cup_{0\leq k\leq r} \JJ_k.
\end{align}
Note that while $\JJ$ is connected, the spaces $\JJ^P,\JJ^M$ have $r+1$ connected components. 
We also note that $\JJ^M_0$ is the simultaneous resolution of the Artin component, up to a base change. Further details of this construction can be found in Section~\ref{versal}.

In a recent work of \cite{HRS16}, it is shown that any ALE K\"ahler surface is birationally equivalent to a deformation of $\CC^2/\Gamma$.
Their work indicates that the space of minimal ALE
K\"ahler surfaces is essentially parameterized by $Der(\Y_0)$.
In Lemma \ref{minimal_ALE_moduli} below, we show that
any minimal ALE K\"ahler surface $(X,J)$ is biholomorphic to an element in $\JJ^M$.
For this reason, it is reasonable to first restrict our attention to complex structures parametrized by the base space $\mathcal{J}^M$ (or $\mathcal{J}^P$). 

\begin{theorem}
\label{background_metric}
There exists a smooth family of background ALE K\"ahler metrics
$\omega_{b,J}$, for all smooth fibers over $J \in \JJ^M$ (similarly for $J \in \JJ^P$ away from the discriminant locus).
\end{theorem}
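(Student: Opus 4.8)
The plan is to construct the family $\omega_{b,J}$ by patching together a canonical local model near infinity with an arbitrary interior Kähler form, using the fact that the relevant obstruction spaces vanish in the decay range $(-2,-1)$. First I would observe that for each smooth fiber $X_J$ over $J \in \JJ^M$, the end of $X_J$ is biholomorphic to $(\RR^4 \setminus \overline{B})/\Gamma$, and on this end the standard Euclidean Kähler form $\omega_{Euc}$ (with respect to the ALE coordinate) gives a canonical Kähler form. The issue is to extend this over the compact part of $X_J$ while retaining the Kähler condition, and to do so in a way that varies smoothly with $J$. Since $\JJ^M$ is (by the versal deformation construction recalled above) a smooth family away from the discriminant, and smoothness is an open condition, one works over a neighborhood of each point and then patches with a partition of unity on the base.

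The key steps, in order: (1) Fix a smooth trivialization of the family $\X^M \to \JJ^M$ over a coordinate chart $U$ of the base, restricted to fibers, identifying the ends compatibly with the fixed ALE coordinate on $(\RR^4\setminus\overline B)/\Gamma$; this uses Ehresmann-type arguments for the proper part together with the explicit $\CC^*$-equivariant structure near infinity. (2) On each fiber $X_J$, take a cutoff function $\chi$ equal to $1$ near infinity and $0$ on $K$, and set $\eta_J = \chi \cdot \omega_{Euc}$; this is a closed (indeed exact outside a compact set) real $(1,1)$-form that is positive near infinity and lies in $C^\infty_{\delta_0}$ of the Euclidean reference for any $\delta_0 \in (-2,-1)$ after subtracting $\omega_{Euc}$, but $\eta_J$ is not globally positive. (3) Correct $\eta_J$ by adding a large multiple of a fixed interior Kähler form: choose any Kähler form $\omega_{int,J}$ on $X_J$ (which exists since $X_J$ is Kähler by hypothesis, being a deformation of $\CC^2/\Gamma$ that admits ALE Kähler metrics), supported-ly modified to agree with $0$ near infinity via another cutoff; then for a constant $C = C(J)$ large enough, $\omega_{b,J} := C\,\widetilde{\omega_{int,J}} + \eta_J$ is positive everywhere on $X_J$. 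One checks this is closed and that $\omega_{b,J} - C\,\omega_{Euc}'$ (for the appropriate normalization near infinity) decays at rate $\delta_0$, i.e. $\omega_{b,J}$ is a genuine ALE Kähler form. (4) Verify smooth dependence on $J$: all the ingredients—the trivialization, the cutoffs, the interior form, and the constant $C$ (which can be taken locally uniform on $U$ by compactness of the region where positivity must be forced)—vary smoothly, so $\omega_{b,J}$ does too. (5) Globalize over $\JJ^M$ using a partition of unity subordinate to the cover by such charts $U$; a convex combination of Kähler forms is Kähler, and the decay and closedness are preserved, giving the smooth global family. The analogous statement for $\JJ^P$ away from the discriminant is identical since the fibers there are again smooth ALE Kähler surfaces.

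The main obstacle I expect is step (1) together with the smoothness bookkeeping in step (4): producing a trivialization of the family of \emph{noncompact} complex surfaces that is simultaneously well-behaved on the compact core (where one only has a smooth, not holomorphic, trivialization from Ehresmann applied to a suitable compact-with-boundary exhaustion) and \emph{canonical} near infinity (where the $\CC^*$-action and the fixed identification with $(\RR^4\setminus\overline B)/\Gamma$ must be respected), and then checking that the patching constants and cutoffs can be chosen locally uniformly so that the resulting forms are jointly smooth in $(x,J)$. A secondary point is ensuring the interior Kähler form $\omega_{int,J}$ can itself be chosen to depend smoothly on $J$; this follows because Kählerity is open and the fibers are projective-over-affine (or at least admit Kähler forms by the ALE hypothesis combined with the results of \cite{HRS16}), so one can locally take a fixed form and restrict. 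Once the local families are in hand, the partition-of-unity globalization in step (5) is routine. I would also remark that no curvature condition is imposed here—$\omega_{b,J}$ is merely a \emph{background} ALE Kähler metric, to be deformed later (via the continuity method and Theorem~\ref{compactness}) toward an SFK representative—so we need not worry about scalar curvature at this stage.
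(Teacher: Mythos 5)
Your architecture (cutoff flat model at infinity glued to an interior K\"ahler form, depending smoothly on $J$) has the right shape, but the proposal misses the essential technical content and contains two genuine gaps. The first is at the end: $\omega_{Euc}$ is a K\"ahler form for the flat $J_{Euc}$ but is not a $(1,1)$-form with respect to the deformed $J$, so ``positive near infinity'' is not well-posed as stated; moreover $\chi\cdot\omega_{Euc}$ is not closed ($d\chi\wedge\omega_{Euc}\neq 0$), so one must cut off a \emph{potential}, not a form. The paper resolves this by invoking Pinkham's theorem that all fibers over $\JJ^M$ share the same first-order (indeed third-order) infinitesimal neighborhood of the compactifying divisor $D$, then choosing sections $\sigma_0,\zeta_1,\zeta_2$ with $\bar\partial\zeta_j = O(|\sigma_0|)$ so that $J - J_{std} = O(|\sigma_0|) = O(r^{-1})$, and showing that $\sqrt{-1}\,\partial_J\bar\partial_J\bigl(|\zeta_1/\sigma_0|^2 + |\zeta_2/\sigma_0|^2\bigr) = \omega_{Euc} + O(r^{-1})$ is $J$-positive at large $r$. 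Nothing in your outline produces a $J$-psh potential whose complex Hessian is asymptotically Euclidean.

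The second gap concerns the interior term and the decay rate. You assume an interior K\"ahler form on $X_J$ exists (``$X_J$ is K\"ahler by hypothesis''); this is circular, since constructing such a form is precisely the theorem's content. The paper's interior correction is canonical: the strictly plurisubharmonic exhaustion $\varphi = (1+\sum|u^j|^2)^\alpha$ from the Stein embedding of the Remmert reduction $X'$, with $0<\alpha<1$ small, plus a local potential-level gluing near the exceptional set as in Lemma~\ref{tblemma} (plugging in Lock--Viaclovsky models). This yields an ALE K\"ahler form with rate only $O(r^{-\nu})$, $\nu<1$, whereas the applications require $-2<\delta_0<-1$; the paper upgrades the rate by putting $J-J_{Euc}$ in divergence-free gauge and bootstrapping $\phi = O(r^{-2+\epsilon})$ from the integrability equation $\bar\partial\phi + [\phi,\phi]=0$ via elliptic estimates, a step entirely absent from your outline. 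Your steps (1) and (5), the Ehresmann trivializations and the partition of unity on the base, are superfluous: once $\sigma_0,\zeta_j,u^j$ are chosen, the construction is canonical in $J$ and smooth dependence is automatic.
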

This will be proved in Section~\ref{non-Artin} below. 
Our main interest is therefore in constructing ALE SFK metrics in these ALE K\"ahler classes. 
We emphasize that in all the following results, the K\"ahler cone is defined with respect to the background ALE K\"ahler metric $\omega_{b,J}$.
Thus in the following when there is no ambiguity, we will abbreviate $\mathcal{P}(X,J,\omega_{b,J}, \delta_0)$ and $\KK(X,J,\omega_{b,J}, \delta_0)$ as $\mathcal{P}(J)$ and $\KK(J)$, respectively. 

Recall from above that for each irreducible component $\JJ_k$ in the moduli space $\JJ$ associated to the versal deformation of 
$\CC^2/\Gamma$, there corresponds a $P$-resolution $Z^P_k$ and a $M$-resolution $Z^M_k$.
The space $Z^P_k$ is an orbifold with singularities of type $T$, and 
the space $Z^M_k$  is an orbifold with only type $T_0$ singularities. 
\begin{theorem}
\label{classification}
Let $\JJ$ be the moduli space associated to the versal deformation of $\CC^2/\Gamma$ as defined in the previous paragraphs. Let $\JJ_k$ be an irreducible component.
\begin{enumerate}
\item[(a)] If $\JJ_k = \JJ_0$ is the Artin component, 
then for any complex structure  $J\in \JJ^M_0$
there exists an ALE SFK metric in some K\"ahler class in $\KK({J})$.

\item[(b)]
For $k>0$, 
if there exists an ALE SFK orbifold metric on the orbifold 
$Z^M_k$,  then for any complex structure  
$J\in \JJ^M_k$ away from the central fiber,
there exists an ALE SFK metric in 
some K\"ahler class in $\KK(J)$.

\item[(c)]
For $k>0$, 
if there exists an ALE SFK orbifold metric on the orbifold 
$Z^P_k$, then for any complex structure $J\in \JJ^P_k$ away from the discriminant locus,
there exists an ALE SFK metric for 
some K\"ahler class in $\KK(J)$.
\end{enumerate}
\end{theorem}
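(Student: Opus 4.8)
The plan is to reduce all three parts to Corollary \ref{corexist}, applied on the appropriate total spaces, by verifying its hypotheses — existence of a background SFK metric and the non-collapsing bound $\mathcal{V}(\mathcal{P}(J)) > 0$ — and then transporting metrics along the $\CC^*$-action and across the base-change maps in diagram \eqref{JJ_k}. I would organize the argument around the observation that, within each irreducible component $\JJ_k$, the $\CC^*$-action of negative weight retracts the smooth fibers onto a ``model'' fiber: the central fiber, which is precisely the $M$-resolution $Z^M_k$ (for the $\JJ^M_k$ picture) or the $P$-resolution $Z^P_k$ (for the $\JJ^P_k$ picture). An SFK orbifold metric on that central fiber should then furnish, via the $\CC^*$-flow and a gluing/deformation argument, a background ALE K\"ahler metric $\omega_{b,J}$ on each nearby smooth fiber — this is exactly the content of Theorem \ref{background_metric}, which I would invoke directly — and, crucially, a \emph{uniform} lower volume bound along the family, giving $\mathcal{V}(\mathcal{P}(J)) > 0$.

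For part (a), the Artin component $\JJ_0$: here the central object is $\CC^2/\Gamma$ itself and $\JJ^M_0$ is the simultaneous resolution of the Artin component up to base change. Its smooth fibers are the minimal resolutions of the deformations $\CC^2/\Gamma$ along the Artin component, and these carry the Kronheimer ALE hyperk\"ahler metrics (in the rate range $-2 < \delta_0 < -1$) as explicit SFK — indeed Ricci-flat — backgrounds; these have maximal volume growth, so $\mathcal{V}(\mathcal{P}(J)) > 0$ is immediate and uniform. Then Corollary \ref{corexist} applies verbatim to conclude that every K\"ahler class in $\KK(J)$ is represented by an ALE SFK metric. For parts (b) and (c) one does not have an a priori SFK metric, which is why the hypothesis is added: one \emph{assumes} an ALE SFK orbifold metric on $Z^M_k$ (resp.\ $Z^P_k$). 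I would use Theorem \ref{background_metric} to spread this to a smooth family of background metrics $\omega_{b,J}$ on fibers away from the central fiber (resp.\ away from the discriminant locus), and then I need the non-collapsing bound. The key point is that as $J$ degenerates to the central fiber the background metrics converge (in Cheeger--Gromov sense) to the fixed orbifold SFK metric on $Z^M_k$ (resp.\ $Z^P_k$), whose lower volume growth ratio is a fixed positive number; a compactness/continuity argument on the parameter space then yields a uniform $v > 0$ with $\mathcal{V}(\mathcal{P}(J)) > v$ on a neighborhood, and one covers $\JJ^M_k$ (resp.\ the smooth part of $\JJ^P_k$) by such neighborhoods together with the $\CC^*$-scaling which preserves $\mathcal{V}$. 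With $\mathcal{V}(\mathcal{P}(J)) > 0$ in hand, Corollary \ref{corexist} again closes the argument for each such $J$.

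The main obstacle I anticipate is the non-collapsing verification in parts (b) and (c): proving that $\mathcal{V}(\mathcal{P}(J))$ stays positive requires controlling the geometry of the background metrics uniformly as the complex structure varies over a possibly singular, reducible base, and in particular as one approaches the central fiber or the discriminant locus, where the fibers acquire orbifold singularities. One has to rule out the formation of long thin necks in the family — the $\CC^*$-equivariance of diagram \eqref{JJ_k} is the structural tool that makes this tractable, since it lets one replace ``$J$ near the central fiber'' by ``$J$ on a fixed compact slice, rescaled,'' but turning this into a clean uniform bound is where the real work lies. A secondary subtlety is compatibility of the background K\"ahler classes: one needs the family $\omega_{b,J}$ from Theorem \ref{background_metric} to have K\"ahler cone $\KK(J)$ varying continuously so that ``some K\"ahler class in $\KK(J)$'' is a meaningful, non-vacuous conclusion — for $J$ on the central fiber / discriminant this can fail, which is exactly why those loci are excluded in (b) and (c).
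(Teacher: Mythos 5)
Your proposal misreads the conclusion of the theorem and, as a result, reaches for the wrong tool. Theorem~\ref{classification} only claims existence of an ALE SFK metric in \emph{some} K\"ahler class in $\KK(J)$ --- not in every class. That weaker conclusion is deliberately calibrated so that no non-collapsing hypothesis is needed: the paper proves it via an openness argument plus $\CC^*$-scaling, without ever invoking Corollary~\ref{corexist}. Specifically, for (a), one constructs a single ALE SFK metric on the minimal resolution (Calderbank--Singer in the cyclic case, Lock--Viaclovsky in general), deforms it to an open neighborhood of the central point in $\JJ^M_0$ using the deformation result \cite[Theorem~1.4]{HanViaclovsky}, and then propagates to all of $\JJ^M_0$ by pulling back under the $\CC^*$-action and rescaling so that the ALE coordinate is preserved. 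For (b) and (c), the assumed orbifold SFK metric on $Z^M_k$ (resp.~$Z^P_k$) is smoothed to nearby fibers by the ALE adaptation of Biquard--Rollin, Theorem~\ref{SFK_non_Artin}, and again spread over the whole component by the $\CC^*$-action. You do not need closedness/compactness in this proof at all; that machinery is reserved for the stronger statements in Theorems~\ref{theorem1} and~\ref{exthm2}, which conclude existence in \emph{every} K\"ahler class but are restricted to those groups where the topological quantity $\mathcal{C}(X)$ is positive.

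Two further problems would bite if you tried to carry out your plan. First, your appeal to Kronheimer's hyperk\"ahler metrics for the Artin component of (a) is only valid when $\Gamma\subset{\rm SU}(2)$; for general $\Gamma\subset{\rm U}(2)$ the fibers of $\JJ^M_0$ are not hyperk\"ahler, so one must use the (merely) SFK Calderbank--Singer or Lock--Viaclovsky metrics, and "Ricci-flat therefore non-collapsed" is unavailable. Second, and more fundamentally, you cannot verify $\mathcal{V}(\mathcal{P}(J))>0$ by producing a family of non-collapsed backgrounds and letting them converge to the orbifold metric. The quantity $\mathcal{V}(\mathcal{P}(J))$ is an infimum over \emph{all} SFK metrics with K\"ahler form in $\mathcal{P}(J)$, not just the ones you built, and the paper's only mechanism for bounding this infimum is the topological argument of Subsection~\ref{volume}, which gives a uniform Sobolev constant via $\mathcal{C}(X)>0$. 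That condition fails for many $\Gamma$, which is precisely why the paper conjectures, but does not prove, that the volume hypothesis in Corollary~\ref{corexist} is always satisfied.
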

Case (a) follows easily from \cite[Theorem 1.4]{HanViaclovsky}.
Cases (b) and (c) are obtained by applying a generalization of a result of Biquard-Rollin to the 
ALE case \cite{BiquardRollin}. For the precise statement, see Theorem~\ref{SFK_non_Artin} below.

Recall that for integers $p,q$ satisfying $(p,q) = 1$, the cyclic action 
$\frac{1}{p}(1,q)$ is that generated by $(z_1, z_2) \mapsto ( \zeta_p z_1, \zeta_p^q z_2)$ 
where $\zeta_p$ is a primitive $p$th root of unity.

\begin{corollary} 
\label{cyclicthm1}
Let $\Gamma = \frac{1}{p}(1,q)$ be any cyclic group with $(p,q) = 1$,
and let $\mathcal{J}^M_k$ be any component of $\mathcal{J}^M$.
Then for any $J \in \mathcal{J}^M_k$ ($J$ is away from the central fiber if $k>0$), 
there exists a scalar-flat K\"ahler metric $\omega_J$ in some 
K\"ahler class. 
\end{corollary}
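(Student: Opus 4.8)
The strategy is to reduce everything to Theorem~\ref{classification}. Since $\mathcal{J}^M_k$ is an arbitrary component of $\mathcal{J}^M$, we distinguish the Artin case $k=0$ from the non-Artin case $k>0$. When $k=0$, the component $\JJ_0$ is the Artin component, so Theorem~\ref{classification}(a) applies verbatim: for every $J \in \JJ^M_0$ there is an ALE SFK metric in some K\"ahler class of $\KK(J)$, with no hypothesis to verify and no restriction on $J$. Hence the entire content of the corollary beyond Theorem~\ref{classification} lies in the case $k>0$, where Theorem~\ref{classification}(b) applies provided one can exhibit an ALE SFK orbifold metric on the $M$-resolution $Z^M_k$.

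To build that orbifold metric, I would use the fact that $\Gamma = \frac{1}{p}(1,q)$ is cyclic, so that $\CC^2/\Gamma$ is a toric surface singularity. For a toric singularity the versal deformation, its irreducible components, their $P$-resolutions and their $M$-resolutions are all governed by combinatorial data (subdivisions of the associated two-dimensional cone, equivalently choices of lattice points on the boundary of the Newton polygon); in particular each $Z^M_k$ is again a toric surface, and its only singular points are the residual $T_0$ singularities, which are of the form $\frac{1}{m^2}(1, ma-1)$ and are themselves cyclic, hence toric. Thus $Z^M_k$ is a toric partial resolution of $\CC^2/\Gamma$. On such a space the explicit toric construction of scalar-flat K\"ahler ALE metrics (LeBrun in the case $\frac{1}{m}(1,1)$, and Calderbank--Singer for general cyclic quotients) produces a K\"ahler metric of vanishing scalar curvature with the standard ALE asymptotics at infinity and with cone (orbifold) behaviour exactly at the remaining $T_0$ points; this is the desired ALE SFK orbifold metric on $Z^M_k$. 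Feeding it into Theorem~\ref{classification}(b) yields, for every $J \in \JJ^M_k$ away from the central fiber, an ALE SFK metric $\omega_J$ in some class of $\KK(J)$, which together with the $k=0$ case proves the corollary.

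The step I expect to be the main obstacle is the construction of the orbifold metric on $Z^M_k$: one must check that the toric scalar-flat K\"ahler ansatz genuinely extends across the $T_0$ orbifold points with the correct orbifold group at each point, while retaining true ALE decay for the given $\Gamma$ at infinity --- in other words, that the combinatorial data of the chosen partial resolution matches the data the metric ansatz requires. A secondary, more routine point is the bookkeeping needed to confirm that for cyclic $\Gamma$ every component $\JJ^M_k$ of $\JJ^M$ really does arise from a toric partial resolution of $\CC^2/\Gamma$, so that Theorem~\ref{classification} is applicable to all of them; this should follow from the toric description of the versal deformation recalled above. Everything else is a direct appeal to Theorem~\ref{classification}.
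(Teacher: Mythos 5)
Your proposal follows essentially the same route as the paper: dispose of the Artin component $k=0$ via Theorem~\ref{classification}(a), and for $k>0$ reduce the problem to exhibiting an ALE SFK orbifold metric on the toric $M$-resolution $Z^M_k$, then invoke Theorem~\ref{classification}(b). The one point you flag as the main obstacle --- that the toric scalar-flat K\"ahler ansatz really extends across the residual $T_0$ orbifold points with the correct cone angles while keeping ALE decay for $\Gamma$ at infinity --- is exactly the step the paper addresses explicitly: it parameterizes the Calderbank--Singer metrics on the minimal resolution $\widetilde{Z^M_k}$ by the lengths of the boundary segments of its moment polygon, and obtains the desired orbifold metric by setting to zero (equivalently, taking a Gromov--Hausdorff limit as they shrink to zero) the lengths of those segments corresponding to the exceptional curves contracted by $\widetilde{Z^M_k}\to Z^M_k$.
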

This is obtained by using the Calderbank-Singer construction from \cite{CalderbankSinger}, together with Theorem~\ref{classification}. 

\subsection{Global existence results} 
\label{introex}
 We now turn our attention to existence 
of global moduli spaces of ALE SFK metrics for certain groups $\Gamma$. 
The following theorem is an application of Case~(a) in Theorem~\ref{classification} together with 
Corollary \ref{corexist}. 
\begin{theorem}
\label{theorem1}
Let $\Gamma \subset {\rm{U}}(2)$ be any of the following groups:
\begin{align}
\label{validgroups}
\frac{1}{3}(1,1), \ \frac{1}{5}(1,2), \ \frac{1}{7}(1,3).
\end{align}
Note that for these groups, the versal deformation space of $\CC^2 / \Gamma$ 
has only the Artin component $\mathcal{J}$, which has $b_2(X) =1,2,3$, respectively,
where $b_2$ denotes the second Betti number.   
Then for any complex structure $J \in \mathcal{J^M}$, and any 
K\"ahler class 
$[\omega] \in \mathcal{K}(J)$, there exists a scalar-flat K\"ahler ALE metric $g$ satisfying $[\omega_g] = [\omega]$.
\end{theorem}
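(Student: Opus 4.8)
The plan is to combine the two main results already established in this paper: Theorem~\ref{classification}(a), which produces, for each $J \in \mathcal{J}^M$, an ALE SFK metric in \emph{some} K\"ahler class in $\KK(J)$; and Corollary~\ref{corexist}, which upgrades such a metric to an ALE SFK metric in \emph{every} K\"ahler class, provided the non-collapsing hypothesis $\mathcal{V}(\mathcal{P}(J)) > 0$ holds. Thus the entire content of the proof reduces to verifying, for the three groups in~\eqref{validgroups}, that $\mathcal{V}(\mathcal{P}(J)) > 0$ for every $J \in \mathcal{J}^M$; all other ingredients are cited.

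First I would set up the topological non-collapsing argument alluded to in the Remark following Corollary~\ref{corexist}. The key observation is that for the three groups $\frac{1}{3}(1,1)$, $\frac{1}{5}(1,2)$, $\frac{1}{7}(1,3)$, the versal deformation space consists only of the Artin component, and the corresponding minimal resolutions have second Betti number $1,2,3$ respectively. One would want to argue that collapsing of a sequence of ALE SFK metrics $g_i \in \mathcal{P}(J)$ (with $R_{g_i} = 0$) with a \emph{fixed} K\"ahler class is topologically obstructed. Here I would invoke the Tian--Viaclovsky compactness/orbifold-limit machinery (the same input used in Theorem~\ref{compactness}): a collapsing sequence would produce, via rescaling at the points of maximal curvature concentration, a nontrivial ALE SFK \emph{bubble}, which is a scalar-flat K\"ahler ALE space with nontrivial topology; but the sum of the second Betti numbers of the bubbles plus the limit orbifold is bounded by $b_2$ of $X$ together with the combinatorics of the group $\Gamma$, and for these particular small groups there is simply no room for a nontrivial bubble to split off while keeping the K\"ahler class (equivalently, the relevant intersection-form data) fixed. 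Concretely, each such bubble must itself be one of the Calderbank--Singer / Artin-component ALE spaces, and a Betti-number count shows that the only possibility compatible with the given $\Gamma$ and a non-degenerate limiting K\"ahler class is that no curve areas go to zero, i.e., $\mathcal{V}$ stays bounded below along the whole space $\mathcal{P}(J)$.

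Once $\mathcal{V}(\mathcal{P}(J)) > 0$ is established, I would feed this into Corollary~\ref{corexist}: taking as base point the ALE SFK metric in $\KK(J)$ furnished by Theorem~\ref{classification}(a) (whose hypotheses are met since, for the Artin component, no orbifold SFK metric on an $M$-resolution is needed -- the relevant case reduces to \cite[Theorem~1.4]{HanViaclovsky}), the continuity method then runs: openness by the argument of \cite[Section~8]{HanViaclovsky}, and closedness by Theorem~\ref{compactness} applied to the segment of K\"ahler classes joining $[\omega_{g_0}]$ to the target class $[\omega]$, which is a compact subset of the interior of the K\"ahler cone $\KK(J)$. This yields, for each $J \in \mathcal{J}^M$ and each $[\omega] \in \KK(J)$, a scalar-flat K\"ahler ALE metric $g$ with $[\omega_g] = [\omega]$, which is the assertion of the theorem.

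The main obstacle is the non-collapsing verification, i.e., the step establishing $\mathcal{V}(\mathcal{P}(J)) > 0$. This is where one genuinely uses the special nature of the three groups in~\eqref{validgroups}: the argument is essentially a finite case analysis showing that the possible bubble trees arising in a Cheeger--Gromov--Hausdorff degeneration are incompatible with the fixed (non-degenerate) K\"ahler class, using the classification of ALE scalar-flat K\"ahler bubbles together with a careful bookkeeping of $b_2$ and of which holomorphic curves could have vanishing area. For groups with larger $b_2$ or with non-Artin components, such an argument would fail (there would be genuine room for bubbling), which is precisely why Theorem~\ref{theorem1} is restricted to this short list. I would also need to double-check that the background metric $\omega_{b,J}$ supplied by Theorem~\ref{background_metric} is itself SFK (or can be taken so) for these Artin-component complex structures, so that the hypothesis "$g_0$ is SFK" in Corollary~\ref{corexist} is legitimately satisfied; for the Artin component this follows from the Calderbank--Singer and \cite{HanViaclovsky} constructions.
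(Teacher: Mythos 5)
Your reduction of the theorem to (i) Theorem~\ref{classification}(a) providing an initial ALE SFK metric and (ii) Corollary~\ref{corexist} together with a verification of $\mathcal{V}(\mathcal{P}(J))>0$ is correct, and that is indeed the skeleton of the paper's argument. However, your proposed verification of the non-collapsing condition is circular and would not work.

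You suggest proving $\mathcal{V}(\mathcal{P}(J))>0$ by contradiction, invoking the Tian--Viaclovsky orbifold-limit machinery to extract a nontrivial bubble from a collapsing sequence and then ruling it out by a Betti-number count. But the Tian--Viaclovsky compactness results (and hence the bubbling analysis of Theorem~\ref{compactness}) \emph{assume} the lower volume growth bound as a hypothesis; that is precisely condition~(2) of Theorem~\ref{compactness}. Without it, a collapsing sequence need not produce a 4-dimensional ALE bubble at all --- collapse with bounded curvature can give lower-dimensional Gromov--Hausdorff limits (Cheeger--Fukaya--Gromov), and there is no orbifold/bubble dichotomy to appeal to. So the step ``a collapsing sequence would produce a nontrivial ALE SFK bubble'' begs exactly the question at hand.

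The paper's actual verification is quite different and uses no compactness theory. It relies on the topological invariant
\[
\mathcal{C}(X) = 2\chi(M) - 3\tau(M) - 2\Bigl(1 - \tfrac{1}{|\Gamma|}\Bigr) - 3\eta(S^3/\Gamma) = 2 - b_2(X) + \tfrac{2}{|\Gamma|} - 3\eta(S^3/\Gamma),
\]
where $M$ is the orbifold conformal compactification of $X$. Using the Chern--Gauss--Bonnet and Hirzebruch signature theorems for orbifolds, one shows that if $\mathcal{C}(X)>0$ then the Yamabe invariant $Y(M,[\hat{g}])$ is bounded below by $4\sqrt{6}\pi\sqrt{\mathcal{C}(X)}$ (or is maximal if no minimizer exists). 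Transporting this back through the conformal change and using $R_g \equiv 0$ gives a uniform $L^2$-Sobolev inequality on $(X,g)$ with constant depending only on $X$, not on the particular SFK metric $g$. The volume lower bound then follows by a standard argument from the Sobolev inequality. Thus the whole non-collapsing step reduces to verifying $\mathcal{C}(X)>0$ for the three groups, which is a direct computation using the $\eta$-invariant formula \eqref{etacyclic}: one gets $\mathcal{C}(X) = 1, \tfrac{2}{5}, \tfrac{1}{7}$ for $\frac{1}{3}(1,1)$, $\frac{1}{5}(1,2)$, $\frac{1}{7}(1,3)$ respectively. (This also clarifies why the list is so short: e.g.\ for $\frac{1}{9}(1,4)$ one finds $\mathcal{C}(X)=0$.) Finally, your worry about whether $\omega_{b,J}$ itself needs to be SFK is moot: Corollary~\ref{corexist} only requires \emph{some} SFK $g_0$ to serve as the starting point of the continuity method, and by Lemma~\ref{d-dbar} the K\"ahler cone it defines agrees with the one defined by the background metric.
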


\begin{remark} 
Our method also proves an analogous global existence result for the 
case $\Gamma \subset {\rm{SU}}(2)$. However, this case was explicitly constructed by Kronheimer using the hyperk\"ahler quotient construction \cite{Kronheimer}, so 
we do not devote any extra attention to this case. Note also that the $\QQ$-Gorenstein smoothings of the type $T$ cyclic singularities  
admit Ricci-flat K\"ahler metrics which are just quotients of the $A_k$-type hyperk\"ahler metrics by finite groups of isometries \cite{Suvaina_ALE, Wright_ALE}. These metrics play a crucial role in our analysis of non-Artin components. 
\end{remark}

\begin{remark}
 A drastic difference between the ADE cases and the non-ADE cases, is that the global moduli spaces in the latter cases can 
have ``holes'' which can only be filled in by certain smoothings of orbifolds which have non-minimal resolutions. This phenomenon arises already in the case of $\mathcal{O}(-n)$ for $n \geq 3$. 
See Section~\ref{conclusion} below for details of these examples. 
\end{remark}

The groups in Theorem \ref{theorem1} have only Artin components. 
The next result deals with five infinite families of non-Artin 
components, and is an application of Case (b) in Theorem~\ref{classification}, 
together with Corollary \ref{corexist}.  
\begin{theorem}
\label{exthm2}
Let $\Gamma \subset {\rm{U}}(2)$ be any of the following groups for $r\geq 2$
\begin{align}
\tag{1} \Gamma &= \frac{1}{r^2+r+1}(1,r),\\
\tag{2} \Gamma &= \frac{1}{r^2+2r+2}(1,r+1) \mbox{\ or \ } \Gamma = \frac{1}{2r^2+2r+1}(1,2r+1),\\
\tag{3} \Gamma &= \frac{1}{r^2 + 3r + 3} (1, r+2) \mbox{\ or  \ } \Gamma = \frac{1}{3r^2+3r+1}(1,3r+2).
\end{align}
There is a non-Artin component $\mathcal{J}(i)$ of the versal deformation space of $\CC^2/\Gamma$ with $b_2(X) = i$ in Case $(i)$, $i = 1 ,2, 3$. 
For any complex structure $J \in\mathcal{J}^M(i)$ away from the central fiber, and any K\"ahler class $[\omega] \in \mathcal{K}(J)$, 
there exists a scalar-flat K\"ahler ALE metric $g$ satisfying $[\omega_g] = [\omega]$.
\end{theorem}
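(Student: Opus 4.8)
The argument is an application of Case~(b) of Theorem~\ref{classification} together with Corollary~\ref{corexist}. Fix $\Gamma$ to be one of the groups in (1)--(3), let $\JJ(i)\subset\JJ$ be the non-Artin component asserted in the statement, and write $k$ for its index, so that $\JJ_k=\JJ(i)$, with associated $M$-resolution $Z^M_k$ and $\JJ^M_k=\JJ^M(i)$. The proof has three ingredients: (A) verify the structure of $\JJ(i)$ and $Z^M_k$ claimed in the statement; (B) construct an ALE SFK orbifold metric on $Z^M_k$; (C) prove the non-collapsing bound $\mathcal{V}(\mathcal{P}(J))>0$ for every $J\in\JJ^M(i)$ away from the central fiber. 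Granting (A) and (B), Theorem~\ref{classification}(b) gives for each such $J$ an ALE SFK metric $\omega^0_J\in\mathcal{P}(J)$ in \emph{some} class of $\KK(J)$; granting (C) as well, and using that $J$ is a minimal ALE K\"ahler surface (cf.\ Lemma~\ref{minimal_ALE_moduli}) and that $\KK(J)$ equals the K\"ahler cone defined by $\omega^0_J$ (since $\omega^0_J$ and $\omega_{b,J}$ both lie in $\mathcal{P}(J)$, and $C^\infty_{\delta_0}$ is a vector space), Corollary~\ref{corexist} applied with base metric $g_0$ having K\"ahler form $\omega^0_J$ produces an ALE SFK metric in \emph{every} class of $\KK(J)$. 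This is the assertion of the theorem.

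For (A) I would run the description of $P$- and $M$-resolutions of \cite{KollarShepherd} and \cite{BehnkeChristophersen} recalled in Section~\ref{versal} on the continued fraction of $\CC^2/\Gamma$; the point is that the families (1)--(3) are designed so that this computation is uniform in $r$ and produces, besides the Artin component, precisely one further component $\JJ(i)$ whose $M$-resolution $Z^M_k$ carries only type $T_0$ singularities and whose generic smooth fiber has second Betti number $i$. For (B) the key observation is that $\Gamma$ is \emph{cyclic}, so $Z^M_k$ is a toric surface orbifold whose singularities are all of type $T_0$ (Wahl type); the orbifold version of the Calderbank-Singer toric construction \cite{CalderbankSinger} then yields an ALE SFK orbifold metric on $Z^M_k$ --- this is exactly the ingredient used for the cyclic case in Corollary~\ref{cyclicthm1} --- and near each $T_0$ point one may take the model metric to be one of the ALE Ricci-flat quotients of an $A_{n-1}$ hyperk\"ahler metric.

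Ingredient (C) is the main obstacle. Suppose it fails, so there are ALE SFK metrics $g_i\in\mathcal{P}(J)$ with $\mathcal{V}(g_i)\to 0$. Because each $g_i$ is ALE, scalar-flat and K\"ahler --- hence anti-self-dual with $W^+\equiv 0$ --- with the \emph{fixed} group $\Gamma$ and a fixed asymptotic rate, the Chern-Gauss-Bonnet and signature formulas with their boundary corrections at $S^3/\Gamma$ bound $\int_X|\mathrm{Rm}(g_i)|^2\,dV_{g_i}$ purely in terms of $\chi(X)$, $\tau(X)$ and $\Gamma$. By the Tian-Viaclovsky compactness theory \cite{TV2}, after passing to a subsequence and rescaling, either curvature concentrates --- in which case a nonempty finite bubble tree forms, each bubble a non-flat ALE SFK orbifold (in particular with $b_2\geq 1$) carrying a definite amount of $\int|\mathrm{Rm}|^2$ --- or $g_i$ collapses with bounded curvature, which is incompatible with the fixed, non-collapsed ALE end together with the K\"ahler condition. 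Thus bubbling occurs, and the number of bubbles is bounded in terms of $b_2(X)=i\leq 3$.

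I would then replay the birational analysis of Sections~\ref{remove} and \ref{bubbles}: the blow-down statement built on the minimality of $X$, the identification via Lempert's theorem of each bubble with a resolution of an orbifold point of the previous stratum, and Laufer's area-contraction estimate. For $i\leq 3$ these constraints leave only finitely many explicit collapsed configurations, and each forces a compact holomorphic curve in $X$ whose area tends to $0$. If the limiting K\"ahler class is non-degenerate one reaches a contradiction exactly as in the proof of Theorem~\ref{compactness}, since that curve would then have positive limiting area. The delicate point, and the one where the smallness of $i$ is genuinely used, is to exclude the remaining configurations in which the limiting class itself degenerates, by showing that such a degeneration combined with the given bubble tree is incompatible with the combinatorics of the versal deformation of $\CC^2/\Gamma$ established in (A). This yields the desired contradiction, proving $\mathcal{V}(\mathcal{P}(J))>0$, and the theorem then follows from Corollary~\ref{corexist} as explained above.
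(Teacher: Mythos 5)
Your high-level structure --- combining Theorem~\ref{classification}(b) (via Corollary~\ref{cyclicthm1}) with Corollary~\ref{corexist}, organized around ingredients (A), (B), (C) --- is consistent with the paper's scheme, and (B) matches what the paper does (Calderbank--Singer metrics on the toric $M$-resolutions, with orbifold singularities allowed). The problem is ingredient (C): you handle it by a completely different route, and as written it has a genuine gap; it is also where essentially all of the paper's actual work for this theorem resides.

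The gap in (C): you suppose $\mathcal{V}(g_i)\to 0$ and then invoke Tian--Viaclovsky compactness to force bubbling. But those results (\cite{TV2}, and Lemma~\ref{uniform_diffeom} here) take a uniform local volume ratio lower bound as a \emph{hypothesis}; with $\mathcal{V}(g_i)\to 0$ they simply do not apply. They also do not supply the dichotomy ``curvature concentrates or collapses with bounded curvature,'' and your claim that bounded-curvature collapse is incompatible with the fixed ALE end is asserted rather than proved. So the reductio never gets started, and the subsequent bubble-tree discussion hangs in the air.

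The paper establishes (C) by a direct topological mechanism that is independent of bubbling. Subsection~\ref{volume} introduces $\mathcal{C}(X)=2\chi(M)-3\tau(M)-2\big(1-\tfrac{1}{|\Gamma|}\big)-3\eta(S^3/\Gamma)$ for the orbifold conformal compactification $(M,[\hat g])$, and shows --- using the orbifold Chern--Gauss--Bonnet and signature formulas, the positivity of the Yamabe type of $[\hat g]$, and the Akutagawa--Botvinnik alternative --- that $\mathcal{C}(X)>0$ gives a uniform $L^2$-Sobolev inequality and hence a uniform lower volume growth ratio for \emph{every} ALE SFK metric on $X$. The substantive content of the proof is then the explicit computation in Section~\ref{existsec}: writing the relevant non-Artin components as ``type $T(r,s,d)$ with $j$ extra $(-2)$-curves attached,'' $j=1,2,3$, and using the $\eta$-invariant formula~\eqref{etacyclic}, one finds
\begin{align*}
\mathcal{C}(X)=\frac{4-jd^2 s}{j+jdrs+r^2 s},\qquad j=1,2,3,
\end{align*}
whose positivity forces $d=1$ and $s\leq 3$, $s\leq 1$, $s\leq 1$ respectively. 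This is exactly where the specific families (1)--(3) and the constraint $i\leq 3$ come from; your ``delicate point'' about excluding degenerating configurations via bubble-tree combinatorics is not the mechanism at all. You should replace (C) with the $\mathcal{C}(X)>0$ argument of Subsection~\ref{volume} and carry out the $\eta$-invariant/continued-fraction computations.
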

Finally, we conjecture that the assumption on the lower volume growth ratio 
is redundant, and that for any group $\Gamma$, there exists ALE SFK metrics in 
all K\"ahler classes for all complex structures in the versal family. 

\subsection{Acknowledgements} 
The authors would like to thank Simon Donaldson and Gang Tian 
for providing motivating comments during the early stages of this project. 
The authors had helpful discussions on the deformation theory of ALE K\"ahler surfaces with Mao Li, Rares Rasdeaconu, and Song Sun.
Hans-Joachim Hein provided assistance on numerous occasions throughout the preparation of this article. Finally, the authors owe a huge debt of gratitude to Claude LeBrun for invaluable remarks on an early draft of this article, and for many other insightful comments.  

\section{Preliminaries}

\subsection{Notation}
\label{notations}
In this section, we record some symbols and notations that will be used in this article.
Weighted H\"older spaces are defined as follows. 
\begin{definition}
\label{weighted_norm}
Let $E$ be a tensor bundle on $X$, with Hermitian metric $\Vert \cdot\Vert_h$. 
Let $\varphi$ be a smooth section of $E$. We fix a point $p_0\in X$, and define $r(p)$ to be the distance between $p_0$ and $p$. Then define
\begin{align}
\Vert \varphi\Vert_{C^{0}_\delta} &:= \sup_{p\in X}\Big\{\Vert\varphi(p)\Vert_h\cdot (1+r(p))^{-\delta}\Big\}\\
\Vert \varphi\Vert_{C^{k}_\delta} &:= \sum_{|\mathcal{I}|\leq k}\sup_{p\in X}
\Big\{\Vert\nabla^{\mathcal{I}} \varphi(p)\Vert_h\cdot (1+r(p))^{-\delta+|\mathcal{I}|}\Big\},
\end{align}
where $\mathcal{I} = (i_1,\ldots,i_n),|\mathcal{I}|=\sum_{j=1}^n i_j$.
When there is no ambiguity, if $|\mathcal{I}|=d$, we will abbreviate 
$\nabla^{\mathcal{I}} \varphi$ by $\nabla^{(d)}\varphi$.
Next, define
\begin{align}
[\varphi]_{C^{\alpha}_{\delta-\alpha}} &:= \sup_{0<d(x,y)<\rho_{inj}}\Big\{\min\{r(x),r(y)\}^{-\delta+\alpha}
\frac{\Vert\varphi(x)-\varphi(y)\Vert_h}{d(x,y)^\alpha}\Big\},
\end{align}
where $0<\alpha<1$, $\rho_{inj}$ is the injectivity radius, and $d(x,y)$ is the distance between $x$ and $y$. 
The meaning of the tensor norm is via parallel transport along the unique minimal geodesic from  $y$ to $x$, and then take the norm of the difference
at $x$.
The weighted H\"older norm is defined by
\begin{align}
\Vert\varphi\Vert_{C^{k,\alpha}_\delta} &:= \Vert \varphi\Vert_{C^{k}_\delta}+\sum_{|\mathcal{I}|=k}
[\nabla^{\mathcal{I}} \varphi]_{C^{\alpha}_{\delta-k-\alpha}},
\end{align}
and the space $C^{k,\alpha}_{\delta}(X, E)$ is the closure of $\{\varphi \in C^{\infty}(X,E): \Vert \varphi\Vert_{C^{k,\alpha}_\delta}<\infty\}$.
\end{definition}

\begin{itemize}
\item
$\epsilon(i\;|\;\delta)$: The symbol $\epsilon(i\;|\;\delta)$ represents a small positive number, and for any fixed 
$\delta>0$, $\epsilon(i\;|\;\delta)\to 0 $ as $i\to\infty$.
\item
$\Lambda^\cdot, \Lambda^{\cdot,\cdot},\Omega^\cdot$: $\Lambda^p$ stands for the space of real $p$-forms, $\Lambda^{p,q}$ stands for the space of complex $(p,q)$-forms,
$\Omega^p$ stands for the space of complex $(p,0)$-forms.
\item
$\widetilde{X}$: For a complex variety $X$ of complex dimension $2$, $\widetilde{X}$ stands for the minimal resolution of $X$.
\item
$\overline{V}$:
For a topological space $V$, $\overline{V}$ stands for its universal cover.
\item $g, \omega$: We will denote the Riemannian metric by $g$ and $\omega = g(J\cdot, \cdot)$ as the corresponding K\"ahler form. 
But on occasion when there is no ambiguity, we will use these two symbols alternatively for convenience.
\end{itemize}

\subsection{Facts about ALE K\"ahler surfaces}
We list some facts about ALE K\"ahler surfaces which we will use later. We will always assume the asymptotic rate $-\mu<-1$.

By applying Hodge index theorem as shown in \cite[Proposition 4.2]{HL15},  an ALE K\"ahler surface has only one ALE end. 
As pointed out by Hein-LeBrun, for an ALE K\"ahler metric $(X,g,J)$ of order $\mu$, the complex structure has an asymptotic rate of
\begin{align}
\label{HLJ}
\partial^{\mathcal{I}} (J - J_{Euc})= O(r^{-\mu-|\mathcal{I}|}),
\end{align}
for any multi-index $\mathcal{I}$ as $r \rightarrow \infty$, where $J_{Euc}$ is the standard complex structure on Euclidean space. 
This is because, $\nabla_{g_{Euc}}J = (\nabla_{g_{Euc}}-\nabla_{g})J = O(r^{-\mu-1})$. The integral along each $g_{Euc}$-geodesic ray implies the ALE asymptotic
rate of $J$ as above.

\begin{remark}Although our proof will not require the following, we make a remark on the optimal decay rates of the metric and complex structure. For any ALE SFK metric, there exists an ALE coordinate with optimal metric asymptotic rate of $O(r^{-2})$, see \cite{AV12, LebrunMaskit,Streets}. Furthermore, by \cite[Proposition 4.5]{HL15}, 
for $(X,g,J)$ of order $\mu$, there exists an ALE coordinate which is still at least of order $\mu$, and for which $J$ converges to the Euclidean complex structure $J_{Euc}$ at the rate of $O(r^{-3})$.
Therefore, if $g$ is ALE SFK, there always exists an ALE coordinate so that the metric $g$ converges to $g_{Euc}$ at the rate of $O(r^{-2})$
and $J\sim J_{Euc} + O(r^{-3})$ as $r \to \infty$. 
\end{remark}

For an ALE K\"ahler surface $X$, $\H_{-3}(X,\Lambda^{1,1}_\RR)$ stands for the space of decaying real harmonic $(1,1)$-forms. Note that any decaying real harmonic $(1,1)$-form has a decay rate at least $O(r^{-3})$, 
and $H^2(X,\RR) \cong \H_{-3}(X,\Lambda^{1,1}_\RR)$ (for details see \cite[Section 7]{HanViaclovsky} and \cite[Sections~8.4 and 8.9]{Joyce2000}).
We have the following which is a consequence of a $\partial\bar\partial$-lemma for K\"ahler forms as shown  in \cite[Lemma 8.3]{HanViaclovsky}.
\begin{lemma}
\label{d-dbar}
For any two smooth K\"ahler metrics $\omega_1,\omega_2$ over an ALE K\"ahler surface $(X,J)$, if $\omega_1 - \omega_2 = O(r^{\nu-2})$, $(0<\nu<1)$, and 
$\int_X (\omega_1-\omega_2)\wedge h = 0$ for any $h\in \H_{-3}(X,\Lambda^{1,1}_\RR)$,
then there exists  $\phi\in C^\infty_{\nu}(X, \RR)$, such that $\omega_2 = \omega_1 + \sqrt{-1}\partial\bar\partial\phi$.
\end{lemma}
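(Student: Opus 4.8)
The plan is to deduce the statement from the ALE $\partial\bar\partial$-lemma for K\"ahler forms, \cite[Lemma 8.3]{HanViaclovsky}, applied to $\eta := \omega_2 - \omega_1$, so the work is to check that $\eta$ meets the hypotheses of that lemma and to track the weighted decay of the resulting potential. Since $\omega_1$ and $\omega_2$ are $J$-K\"ahler, $\eta$ is a $d$-closed real $(1,1)$-form, and by assumption $\eta \in C^{\infty}_{\nu-2}(X,\Lambda^{1,1}_\RR)$ with $\nu-2 \in (-2,-1)$; in particular $\nu-2$ lies in the non-exceptional range $(-2,0)$ for weighted Hodge theory on a four-dimensional ALE manifold, and $\eta\wedge h = O(r^{\nu-5})$ is integrable for every $h\in\H_{-3}(X,\Lambda^{1,1}_\RR)$, so the pairing $\int_X\eta\wedge h$ is well defined.

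First I would show that the vanishing of $\int_X\eta\wedge h$ over all $h\in\H_{-3}(X,\Lambda^{1,1}_\RR)$ forces $\eta$ to be $d$-exact with a decaying primitive. By the isomorphism $H^2(X,\RR)\cong\H_{-3}(X,\Lambda^{1,1}_\RR)$ (\cite[Section 7]{HanViaclovsky}, \cite[Sections 8.4 and 8.9]{Joyce2000}), the class $[\eta]\in H^2(X,\RR)$ has a harmonic representative $h_\eta\in\H_{-3}$, and $\eta-h_\eta$, being closed, decaying, and cohomologically trivial, admits a primitive $\xi\in C^{\infty}_{\nu-1}(X,\Lambda^1)$ (e.g. $\xi = d^*G(\eta-h_\eta)$, using that the Green's operator gains two powers and $d^*$ loses one, all weights being non-exceptional). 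Stokes' theorem, using $h_\eta = O(r^{-3})$ and $\xi = O(r^{\nu-1})$ to kill the boundary term over $\{r = R\}$ as $R\to\infty$, gives $\int_X h_\eta\wedge h_\eta = \int_X\eta\wedge h_\eta = 0$. Since a decaying harmonic $(1,1)$-form on an ALE K\"ahler surface is primitive — its Lefschetz trace is a decaying harmonic function, hence zero — it is anti-self-dual, so $\int_X h_\eta\wedge h_\eta = -\|h_\eta\|_{L^2}^2$ (equivalently, the intersection form on $H^2$ is negative definite, cf. \cite[Proposition 4.2]{HL15}); therefore $h_\eta = 0$, $[\eta] = 0$, and $\eta = d\beta$ with $\beta\in C^{\infty}_{\nu-1}(X,\Lambda^1)$.

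Next I would upgrade $d$-exactness to $\sqrt{-1}\,\partial\bar\partial$-exactness. Decompose $\beta = \beta^{1,0} + \overline{\beta^{1,0}}$ into its $(1,0)$- and $(0,1)$-parts; since $\eta$ is of type $(1,1)$, the $(2,0)$- and $(0,2)$-components of $d\beta$ vanish, so $\partial\beta^{1,0} = 0$ and $\eta = \bar\partial\beta^{1,0} + \partial\,\overline{\beta^{1,0}}$. A decaying holomorphic $1$-form on $X$ vanishes, by Hartogs extension and a Liouville-type argument on the ALE end followed by unique continuation, so $\beta^{1,0}$ is $\partial$-exact: $\beta^{1,0} = \partial f$ for some $f\in C^{\infty}_{\nu}(X,\CC)$, where the gain of one power of decay and the sharp weight $\nu$ use that $\nu$ and $\nu-1$ avoid the indicial roots of the Laplacian on $\RR^4/\Gamma$. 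Then
\begin{align}
\eta \;=\; \bar\partial\partial f + \partial\bar\partial\bar f \;=\; \partial\bar\partial(\bar f - f) \;=\; \sqrt{-1}\,\partial\bar\partial\phi, \qquad \phi := -2\,\mathrm{Im}(f)\in C^{\infty}_{\nu}(X,\RR),
\end{align}
which, with $\eta = \omega_2 - \omega_1$, is exactly the asserted conclusion $\omega_2 = \omega_1 + \sqrt{-1}\,\partial\bar\partial\phi$.

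The main obstacle is this last step: producing the potential in the sharp weighted space $C^{\infty}_{\nu}(X,\RR)$ rather than in a slightly worse one. This requires carefully tracking the mapping properties of $\partial$, $\bar\partial$, and $\partial\bar\partial$ between weighted H\"older spaces on the ALE manifold, verifying that no exceptional weight lies in $(0,1)$ to obstruct the one-derivative gain, and confirming the vanishing of decaying holomorphic $1$-forms on $X$. All of this is precisely the content of \cite[Section 7 and Lemma 8.3]{HanViaclovsky}, which we invoke; the only additional input needed here is the elementary observation that $\omega_2-\omega_1$ is a closed real $(1,1)$-form in the stated weighted class.
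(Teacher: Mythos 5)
Your proposal is correct and takes essentially the same route as the paper, whose proof here consists entirely of the one-line citation to \cite[Lemma 8.3]{HanViaclovsky}. You have simply unpacked that citation: first translating the orthogonality hypothesis into $[\omega_2-\omega_1]=0$ in $H^2(X,\RR)$ via primitivity/anti-self-duality of decaying harmonic $(1,1)$-forms (equivalently, negative definiteness of the intersection form, \cite[Proposition 4.2]{HL15}), and then running the standard weighted $\partial\bar\partial$-argument with the vanishing of decaying holomorphic $1$-forms, which is precisely the content of the cited lemma.
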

In particular, this shows that our definition of the K\"ahler cone in Definition \ref{kcdef} is the ``correct'' one: any two K\"ahler forms whose difference decays and is zero in the de Rham cohomology group $H^2(X,\RR)$, must differ by $\sqrt{-1} \partial\bar{\partial} \phi$, where $\phi$ is of sub-linear growth rate. 

Another important fact about ALE K\"ahler surfaces is that they are one-convex, which we define next.

\begin{definition}[One-convex surface] A one-convex surface $X$ is 
a noncompact complex surface carrying a $C^{\infty}$-exhaustion function 
$f: X \rightarrow [0, \infty)$ which is strictly plurisubharmonic outside a compact set.
\end{definition}
To see that an ALE K\"ahler surface is one-convex: using an ALE coordinate system,
 extend the pullback of the function $r_{Euc}^2$ to a smooth 
non-negative function on all of $X$,
and this will be the required function $f$. 
Any one-convex surface $X$ is a point modification of a Stein space $Y$, that is, 
$X$ is obtained from $Y$ by substituting some points with compact analytic sets,
for more details, see  \cite[Theorem~2.1]{Peternell}.
On a one-convex surface $X$, any holomorphic function defined outside of a compact set can be extended to a holomorphic function on $X$.
This is because a holomorphic function defined outside of a compact set on the Remmert reduction $Y$ can be extended to a holomorphic function on $Y$ by \cite[Theorem 6.1]{Rossi1963}, and then can be lifted up to a holomorphic function on $X$.

\subsection{Versal deformation of $\CC^2/\Gamma$}
\label{versal}
In this subsection, we will provide more details of the versal family, and the deformation to the normal cone construction.

By Artin \cite{Artin1974} and Wahl \cite{Wahl1979}, there exists an irreducible component $Der_0(\Y_0)\subset Der(\Y_0)$,
with a finite base change (which is a Galois cover) $Res\rightarrow Der_0(\Y_0)$, such that there exists a simultaneous resolution
$\X$ that satisfies the commutative diagram:
\begin{equation}
\label{JJ_0}
\begin{tikzcd}
\X \arrow{r}{} \arrow[swap]{d}{} & {\Y} \arrow{d}{} \\
{Res} \arrow{r}{} & {Der_0(\Y_0)}
\end{tikzcd}
\end{equation}
The base $Der_0(\Y_0)$ is called the Artin component of the versal deformation.
The Artin component is the only irreducible component which admits a simultaneous resolution.
According to Wahl, $Der_0(\Y_0) = Res/W$, where $W$ is the Weyl group action.
Since the $\CC^*$-action is preserved under the finite base change, 
we can apply the $\CC^*$-action on $Res$.
Then we obtain a global analytic space $\JJ_0$ and a family $\X\rightarrow \JJ_0$. Each fiber 
$\X_t$ is smooth.

 We recall some facts from \cite{KollarShepherd}.
There exists a one-parameter $\QQ$-Gorenstein smoothing of $\CC^2/\Gamma$ 
if and only if 
$\Gamma\subset SU(2)$, or $\CC^2/\Gamma$ is a type $T$ singularity, that is, 
$\Gamma$ is cyclic of type 
$ \frac{1}{r^2 s} (1, r s d -1)$ where $r \geq 2, s \geq 1, (r,d) = 1$.
See Section~\ref{existsec} below for more details about type $T$ singularities.
For each non-Artin component $Der_k(\Y_0)$ $(k>0)$,  
there exists a $P$-resolution $Z^P_k$ with only type $T$ singularities, which has a local moduli space of $Der'(Z^P_k)$ which is the component 
corresponding to $\QQ$-Gorenstein smoothings. 
Furthermore, there exists a finite base change $Der'(Z^P_k)\rightarrow Der_k(\Y_0)$.

Next, we recall some facts from \cite{BehnkeChristophersen}. 
There exists an $M$-resolution $Z^M_k\rightarrow Z^P_k$ with only
type $T_0$ singularities (type $T$ singularities with $s=1$), 
which has a local moduli space $Der'(Z^M_k)$, where all
nearby fibers are smooth; here $Der'(Z^M_k)$ denotes the component 
corresponding to $\QQ$-Gorenstein smoothings. 
There exists a finite base change $Der'(Z^M_k)\rightarrow Der'(Z^P_k)$.
All together, we have the commutative diagram
\begin{equation}
\label{JJ_k2}
\begin{tikzcd}
\X \arrow{r}{} \arrow[swap]{d}{} & {\Z} \arrow{r}{} \arrow[swap]{d}{} & {\Y} \arrow{d}{} \\
{Der'_k(Z^M_k)} \arrow{r}{} & Der'(Z^P_k) \arrow{r}{} & {Der_k(\Y_0)}.
\end{tikzcd}
\end{equation}

Each fiber $\Z_t$ is smooth away from the discriminant locus.
Each fiber $\X_t$ is smooth save the central fiber.
For $t''\in Der'(Z^M_k),t'\in Der'(Z^P_k),t\in Der_k(\Y_0)$ with $t''$ mapped to $t'$, $t'$ mapped to $t$, there exists resolutions
$\X_{t''}\rightarrow \Z_{t'}\rightarrow \Y_t$, and $\X_{t''}$ is minimal when $t''\neq 0$.
$\JJ^M_k, \JJ^P_k$ are generated by applying the $\CC^*$-action on $Der'(Z^M_k)$, $Der'(Z^P_k)$ respectively. 

By \cite{HRS16}, any ALE K\"ahler surface is birationally equivalent to an element in the versal deformation of $\CC^2/\Gamma$.
We will review some details of the construction in \cite{HRS16} which will be needed in our proof.  For an ALE K\"ahler surface $X$ under our consideration, the asymptotic rate of the complex structure is faster than
$O(r^{-1-\epsilon})$. By \cite{HL15}, the $O(r^{-1-\epsilon})$ asymptotic rate of the complex structure implies that
$X$ can be compactified analytically to a compact orbifold $\hat{X} = X\cup D$, where $D$ is isomorphic to $\PP^1$
quotient by a finite group (see \cite{Li14} for the more general asymptotically conical case).
There exists a positive integer $m\in \ZZ_+$ such that $m\cdot D$ is a Cartier divisor,
which induces a line bundle $L$ in $\hat{X}$.
By a Nakai-Moishezon type argument, it is shown in \cite{HRS16} that for some $k\in \ZZ_+$ large enough,  $H^0(\hat{X},L^k)\rightarrow H^0(D,L^k)$
is surjective and $L^k\rightarrow \hat{X}$ is globally generated.
As a result, there exist holomorphic sections $s_0,\ldots,s_N$ in $H^0(\hat{X},L^k)$, where $s_0$ is the defining section of $km\cdot D$,
i.e., $s_0$ vanishes exactly on $D$, such that images of $s_1,\ldots,s_N$ in $H^0(D,L^k)$ are generators.
Then the linear system $|H^0(\hat{X},L^k)|$ maps $\hat{X}$ to $\PP^N$ by $[s_0,\ldots,s_N]$, where the image ${\hat{X}}'$ is 
birationally equivalent to $\hat{X}$.
Furthermore, $u^1=\frac{s_1}{s_0},\ldots,u^N=\frac{s_N}{s_0}$ can extend to holomorphic functions on $X$, and 
$u = (u^1,\ldots,u^N)$ maps $X$ to $X'$ in $\CC^N$. 
Define the graded ring
\begin{align}
R = \bigoplus_{\substack{n\geq 0}}H^0(\hat{X},\oo(n\cdot D))
\end{align}
which is finitely generated.
Let $R[z]$ be a graded ring where $z$ is a free variable of the degree $1$ and is defined as
\begin{align}
R[z] = \bigoplus_{\substack{n\geq 0}}(\bigoplus_{\substack{0\leq j\leq n}}H^0(\hat{X},\oo(j\cdot D))\cdot z^{n-j})
\end{align}
The deformation to the normal cone is defined by 
\begin{align}
\label{defnc}
\hat{\X}' = \{s-t z=0\} \subset Proj(R[z])\times \CC,
\end{align}
where $s$ is the defining section of $D$, 
$t\in \CC$,
$\hat{\X}'_1$ is identified with ${\hat{X}}'$, and $C(D):= \hat{\X}'_0\setminus D$ is the normal cone of $D$.
This implies that $X'$ is a deformation of $\CC^2/\Gamma$.
By versality, the deformation to the normal cone can be considered as a pull-back of the versal deformation of $\CC^2/\Gamma$.

We next have the following proposition which parameterizes all {\textit{minimal}} ALE K\"ahler surfaces.
\begin{proposition}
\label{minimal_ALE_moduli}
Each minimal ALE K\"ahler surface is biholomorphic to an element in $\JJ^M$.
\end{proposition}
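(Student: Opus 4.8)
The plan is to combine the birational picture of ALE K\"ahler surfaces from \cite{HRS16} — as recalled in Section~\ref{versal} — with the uniqueness of the minimal resolution of a normal surface singularity, using crucially the minimality hypothesis on $X$.

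First, I would run the \cite{HRS16} construction reviewed above for a minimal ALE K\"ahler surface $(X,J)$: compactify $X$ to an orbifold $\hat X = X\cup D$, produce the holomorphic functions $u^1,\dots,u^N \in \oo(X)$, and pass to the image $X' = u(X)\subset\CC^N$. This yields a proper birational morphism $\pi\colon X\to Y$ onto the Remmert reduction $Y$ of $X$ (a normal Stein surface, with $\pi$ a biholomorphism outside a compact set), and $Y$ is biholomorphic to a fiber $\Y_t$ of the versal deformation of $\CC^2/\Gamma$; after using the $\CC^\ast$-action to pass to the global base spaces, the parameter $t$ lies in some component $\JJ_k$. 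I would then observe that, because $X$ is minimal, $\pi$ is in fact an isomorphism over the regular locus of $Y$ — a connected configuration of compact curves contracted by $\pi$ to a smooth point of $Y$ would contain a $(-1)$-curve, contradicting minimality — so $\pi$ is a genuine resolution of $Y \cong \Y_t$.

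Next, I would identify $X$ with the minimal resolution of $\Y_t$. Since the minimal resolution of a normal surface singularity is dominated, via a birational morphism of smooth surfaces, by every resolution, and since such a morphism out of a surface containing no $(-1)$-curve must be an isomorphism, the minimality of $X$ forces $X$ to be the minimal resolution of $Y\cong\Y_t$. To finish, I would invoke the construction of the $M$-resolution family in Section~\ref{versal}: the base change $\JJ^M_k\to\JJ_k$ is surjective, so some $t''\in\JJ^M_k$ lies over $t$, and the corresponding member $\X_{t''}$ comes with a proper birational morphism $\X_{t''}\to\Y_t$. For $t''\neq 0$ the fiber $\X_{t''}$ is smooth and minimal, hence — by the same reasoning — is also the minimal resolution of $\Y_t$; by uniqueness of the minimal resolution, $X\cong\X_{t''}\in\JJ^M_k\subset\JJ^M$. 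The remaining case $t=0$ (so $Y=\CC^2/\Gamma$ and $X$ is its minimal resolution) is handled via the Artin component, where $X$ is biholomorphic to the central fiber $\X_0\in\JJ^M_0$ of the simultaneous resolution.

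The main obstacle, I expect, is the first step: namely, extracting cleanly from \cite{HRS16} that the Remmert reduction of $X$ is \emph{exactly} a fiber $\Y_t$ of the versal family (this requires, in particular, that the relevant fibers are normal and that $u\colon X\to X'$ really realizes the Remmert reduction), and, in the last step, checking that the family member $\X_{t''}\to\Y_t$ is a resolution in the strict sense — an isomorphism over the regular locus — rather than merely a proper birational morphism. Granting these points, the identification of $X$ with an element of $\JJ^M$ is a formal consequence of the structure of minimal resolutions of surface singularities and of the versal family recalled above.
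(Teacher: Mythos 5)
Your proposal is essentially correct and rests on the same two pillars as the paper's argument: (i) the \cite{HRS16} birational picture together with the versal family, and (ii) uniqueness of the minimal resolution of a normal surface singularity, applied using the hypothesis that $X$ contains no $(-1)$-curve. The difference is one of route, and it is worth spelling out because it affects precisely the point you flag as the main obstacle.

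You try to show directly that the Remmert reduction of $X$ \emph{is} a fiber $\Y_t$ of the versal family, and then compare $X$ with the $M$-resolution fiber $\X_{t''}$ over $t$; you correctly note that extracting from \cite{HRS16} that $u\colon X\to u(X)$ actually realizes the Remmert reduction (rather than merely being a birational morphism to some deformation) is a nontrivial step. The paper sidesteps this identification. It first produces, from the diagram \eqref{JJ_k2}, a concrete element $X'\in\JJ^M$ that is the minimal resolution of $Y\cong\Y_t$, then observes that $X$ and $X'$ — both one-convex and birationally equivalent — are biholomorphic outside compacta via a map $\Phi$. The crucial analytic input is then \cite[Theorem~10.4]{HarveyLawson}: the strictly pseudoconcave image $V=u(X\setminus K)=u'(X'\setminus K')$ in $\CC^N$ has a \emph{unique} Stein filling $W$. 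Since $u$ (resp.\ $u'$) extends holomorphically across $K$ (resp.\ $K'$) by one-convexity and the extended images must coincide with $W$, the space $W$ is the common Remmert reduction of $X$ and $X'$. At that point uniqueness of the minimal resolution of $W$ gives $X\cong X'\in\JJ^M$, without ever having to show that $W$ coincides with $\Y_t$ on the nose. So the paper replaces the step you identified as the gap — proving that the Remmert reduction of $X$ is exactly a versal fiber and that the family member $\X_{t''}\to\Y_t$ is a genuine resolution — by the Harvey--Lawson uniqueness-of-Stein-filling argument, which only requires comparing $X$ with an element of $\JJ^M$ that is \emph{already known} to be a minimal resolution. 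Your final step and the Artin-component handling of the central-fiber case are consistent with the paper's framework; if you want to close the gap along your own route, the missing lemma is precisely that $u$ is a proper modification onto a normal Stein space, and the Harvey--Lawson argument is one clean way to get it.
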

\begin{proof}
Let $X$ be a minimal ALE K\"ahler surface with an end asymptotic to $\CC^2/\Gamma$. Then there exists no $(-1)$-curve in $X$.
By the result of \cite{HRS16}, $X$ is birationally equivalent to $Y$, which is a deformation of $\CC^2/\Gamma$.
By the commutative diagram \eqref{JJ_k2}, there exists an element $X'$ in $\JJ^M$, which is the minimal resolution of $Y$.
Since $X, X'$ are one-convex spaces and they are birationally equivalent with each other, there exist compact subsets $K\subset X,K'\subset X'$,
and a biholomorphic map $\Phi: X\setminus K\rightarrow X'\setminus K'$.
Furthermore, by choosing $K$ large enough,
there exist holomorphic functions $u^1,\ldots,u^N$ on $X\setminus K$, which embed $X\setminus K$ into $\CC^N$ by $u = (u^1,\ldots,u^N)$.
Since $X$ is one-convex, $u$ can be extended to a holomorphic map on $X$. Meanwhile $u' = u\circ\Phi^{-1}$ embeds $X'\setminus K'$ into $\CC^N$ and can be
extended to a holomorphic map on $X'$. The image  $u(X\setminus K)$ in $\CC^N$ conincides with the image $u'(X'\setminus K')$,
which is denoted by $V$. The boundary of $V$ is a strictly pseudoconvex manifold ($V$ itself is called strictly pseudoconcave). By \cite[Theorem~10.4]{HarveyLawson},
there exists a unique Stein space $W$ in $\CC^N$, which extends from $V$ through its boundary smoothly.
By uniqueness of analytic extension, $u(X),u'(X')$ concide with $W$, and thus $W$ is the Remmert reduction of $X, X'$. Since each isolated $2$-dimensional quotient
singularity, there exists a unique minimal resolution, then $W$ has a unique minimal resolution.
Then by the minimality of $X,X'$, they are both biholomorphic to the minimal resolution of $W$.
\end{proof}

\subsection{Volume local non-collapsing}
\label{volume}

Let $(X,g)$ be an ALE SFK metric, with the complex orientation so that $W^+_g \equiv 0$,
and group $\Gamma$ at infinity.  
Let $(M, [\hat{g}])$ be the orbifold conformal compactification, with the reversed orientation so that the group at the orbifold point is also $\Gamma$ \cite{ViaclovskyFourier}. 
Since the orientation is reversed, we have that $W^-_{\hat{g}} \equiv 0$. 
Note that $[\hat{g}]$ is a priori a self-dual \textit{conformal} structure, but 
by \cite[Proposition~12]{CLW}, we can assume that there is a metric representative 
$\hat{g} \in [\hat{g}]$ which is moreover a {\textit{smooth}} Riemannian orbifold.  

The Hirzebruch signature theorem for orbifolds \cite{Kawasaki} states that,  
\begin{align}
\label{taum}
\tau(M)= \frac{1}{12\pi^2}\int_{M} ||W^+||^2dV_{g}  - \eta(S^3/\Gamma),
\end{align}
and $\eta(S^3/\Gamma)$ is the $\eta$-invariant of the signature complex, which for a finite subgroup $\Gamma \subset {\rm SO}(4)$ acting freely on $S^3$, is given by
\begin{align}
\label{eta-invariant}
\eta( S^3/\Gamma) = \frac{1}{|\Gamma|} \sum_{\gamma \neq Id\in \Gamma}  \cot \Big(\frac{ r(\gamma)}{2}\Big) 
\cot\Big(\frac{s(\gamma)}{2}\Big),
\end{align}
where $r(\gamma)$ and $s(\gamma)$ denote the rotation numbers of $\gamma\in\Gamma$.

The Chern-Gauss-Bonnet theorem for orbifolds \cite{Kawasaki} states that
\begin{align}
\label{chim}
\chi(M)= \frac{1}{8\pi^2}\int_{M}\Big( ||W||^2-\frac{1}{2} |E|^2 + \frac{1}{24}R^2\Big)dV_{g}  + \Big( 1 - \frac{1}{|\Gamma|} \Big),
\end{align}
where $E$ denotes the traceless Ricci tensor, and $R$ denotes the scalar curvature. 

Using \eqref{taum} and \eqref{chim}, we obtain 
\begin{align}
2 \chi(M) - 3 \tau(M) =  \frac{1}{4\pi^2}\int_{M} \Big( -\frac{1}{2} |E|^2 + \frac{1}{24}R^2\Big)dV_{\hat{g}} + 2\Big (1 - \frac{1}{|\Gamma|} \Big) + 3  \eta(S^3/\Gamma).
\end{align}
Define the quantity
\begin{align}
\mathcal{C}(X) = 2 \chi(M) - 3 \tau(M) - 2\Big (1 - \frac{1}{|\Gamma|} \Big) - 3  \eta(S^3/\Gamma).
\end{align}
Then we obtain 
\begin{align}
\mathcal{C}(X) \leq  \frac{1}{96 \pi^2}\int_{M} R^2 dV_{\hat{g}}.
\end{align}
We note that the conformal class is of positive type, that is, 
$Y(M,  [\hat{g}]) >0$ \cite{AB2,CLW}.
If there exists a minimizing solution of the Yamabe problem on the 
orbifold $(M, [\hat{g}])$ then since the scalar curvature is constant we obtain
the lower estimate on the Yamabe invariant.
\begin{align}
\label{leyi}
Y(M,  [\hat{g}]) \geq 4 \sqrt{6} \pi \sqrt{\mathcal{C}(X)}.
\end{align}
If there does not exist a Yamabe minimizer, then the estimate of 
Akutagawa-Botvinnik \cite{Akutagawacoyi, AB2} says that the Yamabe invariant must be maximal
\begin{align}
Y(M,  [\hat{g}]) = \frac{8 \sqrt{6} \cdot \pi}{\sqrt{|\Gamma|}}.
\end{align}
In either event, if $\mathcal{C}(X) > 0$ we have that the Yamabe invariant is 
strictly bounded below by a positive constant. From \eqref{leyi}, we have
\begin{align}
\int_M u \square_{\hat{g}} u dV_{\hat{g}} \geq  4 \sqrt{6} \pi \sqrt{ \mathcal{C}(X)} 
\left\{ \int_M u^4 dV_{\hat{g}} \right\}^{1/2}
\end{align}
for any $u \in C^{\infty}(M)$, where 
\begin{align}
\square_{\hat{g}} = - 6 \Delta_{\hat{g}} + R_{\hat{g}} 
\end{align}
is the conformal Laplacian. 

Writing $\tilde{g} = v^{2} g$, 
we have the transformation formula
\begin{align}
\label{cltf}
\square_{\tilde{g}} ( u) = v^{-3} \square_g ( uv). 
\end{align}
This yields
\begin{align}
\int_M f \square_{g} f dV_{g} \geq  4 \sqrt{6} \pi \sqrt{\mathcal{C}(X)} 
\left\{ \int_M f^4 dV_{g} \right\}^{1/2}.
\end{align}
Since $g$ is scalar flat, $\square_g = -6 \Delta_g$, so 
we obtain the $L^2$-Sobolev inequality
\begin{align}
\left\{ \int_X f^4 dV_g \right\}^{1/2} \leq \frac{\sqrt{6}}{ 4 \pi \sqrt{\mathcal{C}(X)}} \int_X |\nabla f|^2 dV_g,
\end{align}
for all $f \in C^{\infty}_c(X)$. 

Note that since $M = X \cup \{pt\}$, we have $\chi(M) = \chi(X) + 1$. 
Also, since the orientation is reversed, we have $\tau(M)= - \tau(X)$. 
Since $(X,g)$ is K\"ahler ALE, we have $b_1(X) = 0$. Therefore 
\begin{align}
\mathcal{C}(X) = 2 - b_2(X) + \frac{2}{|\Gamma|} - 3  \eta(S^3/\Gamma).
\end{align}
Therefore, we have the following:
\begin{proposition}
If $(X,J,g)$ is an ALE SFK metric with $\mathcal{C}(X) > 0$, then 
there exists a constant $v>0$, depending only upon $X$, such that $\mathcal{V}(g) > v$. 
\end{proposition}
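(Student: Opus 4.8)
The plan is to combine the $L^2$-Sobolev inequality derived above with a standard Moser-iteration / Bishop-type volume lower bound argument. By the computation preceding the statement, the hypothesis $\mathcal{C}(X) > 0$ yields a uniform $L^2$-Sobolev constant: there is $C_S = C_S(X) > 0$ with
\begin{align}
\left\{ \int_X f^4 \, dV_g \right\}^{1/2} \leq C_S \int_X |\nabla f|^2 \, dV_g
\end{align}
for all $f \in C^\infty_c(X)$, where $C_S = \sqrt{6}/(4\pi\sqrt{\mathcal{C}(X)})$. Since $(X,J,g)$ is scalar-flat K\"ahler it is in particular Ricci-bounded (indeed $R_g = 0$, and the full curvature tensor of an ALE SFK metric is $L^2$ and decays, but for the volume lower bound only the Sobolev inequality is needed, not a Ricci bound). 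First I would recall the classical fact (see e.g. Li, or Carron, or Akutagawa) that on a complete Riemannian manifold an $L^2$-Sobolev inequality of the above form with exponent $4 = 2n/(n-2)$ for $n=4$ implies a \emph{uniform} Euclidean-type lower volume bound: there is a constant $c = c(C_S) > 0$ such that $\mathrm{Vol}(B_r(x,g)) \geq c\, r^4$ for all $x \in X$ and all $0 < r < 1$ (in fact for all $r>0$, but we only need small radii). Taking $v = c$, this is exactly the assertion $\mathcal{V}(g) > v$, and $v$ depends only on $C_S$, hence only on $X$ through $\mathcal{C}(X)$.

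The key step — deriving the volume lower bound from the Sobolev inequality — goes as follows. Fix $x \in X$ and $0 < r < 1$; write $V(s) = \mathrm{Vol}(B_s(x,g))$. Apply the Sobolev inequality to the family of Lipschitz cutoffs $f_s$ which equal $1$ on $B_{s/2}(x)$, vanish outside $B_s(x)$, and interpolate linearly with $|\nabla f_s| \leq 2/s$. Then the left side is bounded below by $V(s/2)^{1/2}$ and the right side by $C_S \cdot (4/s^2) \cdot V(s)$, giving $V(s/2)^{1/2} \leq (4 C_S/s^2) V(s)$, i.e. a reverse-doubling inequality. A cleaner route avoiding iteration: apply the Sobolev inequality to $f(y) = (r - d(x,y))_+$, a Lipschitz function supported in $B_r(x)$ with $|\nabla f| \leq 1$. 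The left side is $\geq \big(\int_{B_{r/2}(x)} (r/2)^4\big)^{1/2} = (r/2)^2 V(r/2)^{1/2}$, while the right side is $\leq C_S \cdot V(r)$. Hence $V(r) \geq (r^4/16 C_S) \cdot V(r/2)^{1/2} \cdot V(r)^{-1/2} \cdot V(r)^{1/2}$... — more directly, combining $V(r/2) \leq V(r)$ one gets $(r/2)^2 V(r/2)^{1/2} \leq C_S V(r)$, which on its own is not yet a lower bound. The standard fix is to feed this into the differential inequality $V'(s) \geq$ (boundary area) together with the co-area formula, or simply to invoke the known equivalence: a $2n/(n-2)$-Sobolev inequality on a complete $n$-manifold is \emph{equivalent} to the uniform Euclidean isoperimetric-type lower volume bound $V(s) \geq c\, s^n$ (this equivalence is due to Carron and to Akutagawa; see also the treatment in Hebey's book). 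I would cite this equivalence directly.

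The main obstacle — and the only genuine content beyond bookkeeping — is the passage from the Sobolev inequality to the lower volume growth ratio being bounded \emph{below by a positive constant uniformly in $x$ and in $r \in (0,1)$}. This is exactly where completeness of $(X,g)$ is used (so that the balls are genuine and the cutoff functions are compactly supported), and where the Euclidean exponent $4$ matters: the scaling in the test function argument produces precisely $r^4$. Once one has the equivalence "$L^2$-Sobolev with constant $C_S$ $\iff$ $V(s) \geq c(C_S) s^4$ for all $s$," the proposition is immediate, with $v = c(C_S)$ and $C_S$ depending only on $X$ via $\mathcal{C}(X)$. I would structure the written proof as: (i) quote the Sobolev inequality established above; (ii) quote (with reference) the Carron–Akutagawa equivalence between the $L^2$-Sobolev inequality and the uniform Euclidean lower volume bound on a complete manifold; (iii) conclude by setting $v = c(C_S)$, noting the dependence only on $X$. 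No curvature assumption beyond scalar-flatness (used already to get $\square_g = -6\Delta_g$) is required.
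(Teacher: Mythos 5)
Your proposal matches the paper's own proof: the paper derives the uniform $L^2$-Sobolev inequality from $\mathcal{C}(X) > 0$ exactly as you do, and then cites Hebey (Lemma~3.2) for the standard passage on a complete $n$-manifold from an $L^2$-Sobolev inequality with Euclidean exponent to the uniform lower volume bound $\mathrm{Vol}(B_r) \geq v\, r^n$. Your honest acknowledgment that a single cutoff application does not close the argument, and your decision to instead cite the Carron--Akutagawa--Hebey equivalence, is precisely the route the paper takes.
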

\begin{proof} The above argument shows that there is a uniform $L^2$-Sobolev inequality. The lower volume growth estimate follows from this by a standard argument, see \cite[Lemma~3.2]{Hebey}. 
\end{proof}
For any component $ \JJ^M_k$, we define $\mathcal{C}(\JJ^M_k)$ to be 
$\mathcal{C}(X)$, where $X$ is diffeomorphic to a smooth fiber of the 
component $\JJ^M_k$ (noting that any two such fibers are diffeomorphic).

\subsection{Cheeger-Gromov convergence}
\label{CGsubsec}
We begin this subsection with the following notion of convergence. 
\begin{definition}[Pointed Cheeger-Gromov convergence]
A sequence of K\"ahler manifolds $(X_i,g_i,J_i,x_i)$ converges to a K\"ahler orbifold space 
$(Z, g, J, z)$ in the pointed Cheeger-Gromov sense if 
$(X_i,g_i,x_i)$ converges to $(Z,g,z)$ in the pointed Gromov-Hausdorff sense,
and there exists a subset $S = \{p_1,\ldots,p_m\}\subset Z$ which contains the singular set of $Z$, 
for any compact subset $K\subset Z\setminus S$ containing $z$, there exists diffeomorphisms $\psi_i: K\rightarrow X_i$, such that $\psi_i^*g_i, \psi_i^* J_i$ converges to $g,J$ in $C^{k, \alpha}(K)$-sense, for some $k, \alpha$. 
\end{definition}
We refer to \cite{Anderson, Bando, BKN, Nakajimaconv, TV2, Tian} for more details on this type of convergence.

First recall the $\epsilon$-regularity theorem proved in \cite{TV,TV3}.
Let $(X,g)$ be a complete scalar-flat K\"ahler $4$-dimensional manifold, with a local volume ratio lower bound
$v>0$, i.e., $vol(B_r(x))>v\cdot r^4$ for any $|r|<1$.
In \cite[Theorem 1.1]{TV}, by studying the PDE system with a Moser-iteration type argument,
\begin{align}
\Delta_g Ric &= Rm \,*\, Ric \\ 
\Delta_g Rm &= L(\nabla^2_g Ric) + Rm \,*\, Rm
\end{align}
the authors proved that there exists an $\epsilon_0 = \epsilon_0(v)>0$, such that if
$\int_{X\setminus B_R(x_0)}\|Rm(g)\|^2 dV_g <\epsilon_0$,
then there exists $C = C(v)>0$, such that $\|Rm(g)\|< C\cdot r^{-2}$ on $X\setminus B_R(x_0)$,
where $x_0$ is a point in $X$, $B_R(x_0)$ is the geodesic ball centered at $x_0$ with a radius of $R$.
Note that the argument in \cite{TV} required a Sobolev constant bound, but this was weakened to only a lower volume growth assumption in \cite{TV3}. 
Furthermore, by Kato's inequality and a further analysis of the connection form, for any $-2<-\mu<-1$,
for any positive integer $k$, there exists $C' = C'(v,k)>0$, such that, on $X\setminus B_R(x_0)$,
$\|\nabla^{(k)} Rm(g)\| < C'\cdot r^{-2-\mu-k}$. 
We call the $\epsilon_0$ above the ``energy threshold''.

By the proof of \cite[Theorem 1.1]{BKN}, there exists a harmonic coordinate on the universal cover of
$X\setminus B_R(x_0)$, which provides an ALE coordinate
\begin{align}
\label{BKN}
H: {X\setminus B_R(x_0)}\rightarrow \RR^4/\Gamma,
\end{align}
and constants $C'' = C''(v,k)>0$, such that
\begin{align}
|\partial^{(k)}(H_*g - g_{Euc})| < C''\cdot r^{-\mu-k}.
\end{align}
Note that the harmonic coordinates are technically defined on the universal cover $\overline{X\setminus B_R(x_0)}$, which is a mapping 
$\overline{H}: \overline{X\setminus B_R(x_0)}\rightarrow \RR^4$ defined by harmonic functions of ``linear growth''. 
However, by the rigidity of harmonic coordinates proved in  \cite[Corollary 3.2]{Bartnik}, for any $\gamma\in \Gamma$, 
$\gamma^* \overline{H} = \gamma \cdot \overline{H}$, where in the latter formula $\gamma$ is considered as a linear map in
${\rm{SO}}(4)$. This implies that $\overline{H}$ is $\Gamma$-equivariant and can descend to a map 
$H: X\setminus B_R(x_0)\rightarrow \RR^4/\Gamma$. 

\begin{definition}  An energy concentration point $x_{\infty} \in X_{\infty}$ is a point such that 
for any $\delta > 0$, there exists $x_i \in X_{i}$ with $x_i \to x_{\infty}$ (in the Gromov-Hausdorff distance), 
and such that 
\begin{align}
 \int_{B_{\delta}(x_i)} \|Rm(g_i)\|^2 dV_{g_i} \geq \epsilon_0,
\end{align}
where $\epsilon_0$ is the energy threshold. 
\end{definition}

We next define a stronger notion of pointed Cheeger-Gromov convergence in the ALE setting which includes the convergence near $\infty$. 
\begin{definition}
\label{uniform_asymptotic_rate}
Let $(X_i,J_i,g_i,x_i)$ be a sequence of ALE K\"ahler surfaces, where each $g_i$ is asymptotic to $g_{Euc}$ of order
$O(r^{-\mu})$ $(-2<-\mu<-1)$ with respect to a fixed ALE coordinate. 
We say the sequence $\{(X_i,J_i,g_i,x_i)\}$ converges in the sense of 
``pointed Cheeger-Gromov with a uniform ALE asymptotic rate of order $O(r^{-\mu})$''
if there exists an ALE K\"ahler orbifold $(X_\infty,J_\infty,g_\infty,x_\infty)$, where $p_1,\ldots,p_m$
are ``energy concentration '' points in $X_\infty$, such that
\begin{align*}
(X_i,J_i,g_i,x_i)\xrightarrow{\mathrm{pointed \; Cheeger-Gromov}} (X_\infty,J_\infty,g_\infty,x_\infty)
\end{align*}
for any $k\in \ZZ_{\geq 0}, 0<\alpha<1$,
and for any $\delta>0$, when $i$ is sufficiently large, there exists a diffeomorphism
\begin{align*}
\psi_i: X_\infty\setminus \sqcup_{1\leq j\leq m}B_\delta(p_j)\rightarrow X_i
\end{align*}
such that
$\|\psi_i^*g_i -g_\infty\|_{C^{k,\alpha}_{-\mu}(g_\infty)}< \epsilon(i \;|\; k,\delta)$, 
$\|\psi_i^*J_i -J_\infty\|_{C^{k,\alpha}_{-\mu}(g_\infty)}< \epsilon(i \;|\; k,\delta)$.
\end{definition}
Note that if a sequence converges in the above sense, then $X_\infty$ has end diffeomorphic to $\RR^4/\Gamma$ with the same group $\Gamma$
as for $X_i$. Also, for each ``energy concentration'' point $p$ above, there exists a sequence of points $p_i\in X_i$, where
$\lim\limits_{i\to\infty} \|Rm(p_i)\|_{C^0(g_i)}\to \infty$.
We also remark that $p$ may not strictly be an orbifold point, since the ``bubble'' appearing at $p$ could be $\oo_{\CC P^1}(-1)$ with the Burns metric \cite{Burns,Calabi1979}.

\begin{lemma}
\label{uniform_diffeom}
Consider a sequence of ALE SFK metrics $(X_i,J_i,g_i,x_i)$ which are ALE of asymptotic rate 
$O(r^{-\mu})$ with respect to a fixed ALE coordinate, where $-2<-\mu<-1$.
Assume that
\begin{enumerate}
\item[(1)] the spaces $X_i$ are diffeomorphic to a fixed space $X$, 
\item[(2)] there exists a constant $v>0$, independent of $i$, such that $\text{Vol}(B_r(x,g_i))>v\cdot r^4$ for each $x\in X$ and $0<r \leq 1$,
\item[(3)] there exists $R>0$, 
such that $\int_{X_i\setminus B_R(x_i,g_i)}\|Rm(g_i)\|_{C^0}^2 dV_{g_i}<\epsilon_0/2$, where $B_R(x_i,g_i)$ is a geodesic ball with respect to the metric $g_i$.
\end{enumerate}
Then 
up to a subsequence, $(X_i,J_i,g_i,x_i)$ converges 
to an ALE SFK  orbifold $(X_\infty,J_\infty,g_\infty,x_\infty)$
in the sense of pointed Cheeger-Gromov convergence with a uniform ALE asymptotic rate of order $O(r^{-\mu})$.
\end{lemma}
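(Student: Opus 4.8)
The strategy is to combine the interior Cheeger--Gromov compactness theory for scalar-flat K\"ahler $4$-manifolds with the $\epsilon$-regularity and ALE coordinate results recalled above to control the behavior near the end, and then to upgrade the convergence to the weighted $C^{k,\alpha}_{-\mu}$ topology. First I would invoke the compactness theorem of Tian--Viaclovsky \cite{TV2} (using hypothesis (2), the uniform local volume non-collapsing): after passing to a subsequence, $(X_i, g_i, x_i)$ converges in the pointed Cheeger--Gromov sense to a complete scalar-flat K\"ahler orbifold $(X_\infty, g_\infty, J_\infty, x_\infty)$ with finitely many singular points $p_1, \dots, p_m$, which are exactly the energy concentration points; the convergence of $\psi_i^* g_i \to g_\infty$ and $\psi_i^* J_i \to J_\infty$ is in $C^{k,\alpha}_{loc}$ on compact subsets of $X_\infty \setminus \{p_1, \dots, p_m\}$ for every $k$ and $\alpha$, using the scalar-flat K\"ahler elliptic system $\Delta Ric = Rm * Ric$, etc., to bootstrap to all orders. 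Since the $X_i$ are all diffeomorphic to the fixed $X$ with a fixed group $\Gamma$ at infinity, the total mass of curvature is uniformly bounded by the Gauss--Bonnet/signature formulas, so only finitely many bubbles form and energy cannot escape to infinity.

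Next I would use hypothesis (3) to get uniform control on the end. The hypothesis says that outside a \emph{fixed-radius} geodesic ball $B_R(x_i, g_i)$ the curvature energy is below the threshold $\epsilon_0/2$; by the $\epsilon$-regularity theorem of \cite{TV, TV3} this yields $\|Rm(g_i)\| \le C r^{-2}$ on $X_i \setminus B_R(x_i, g_i)$ with $C = C(v)$ independent of $i$, and then (Kato's inequality plus connection-form analysis, as recalled above) $\|\nabla^{(k)} Rm(g_i)\| \le C'(v,k)\, r^{-2-\mu-k}$ for any chosen $-2 < -\mu < -1$. Applying the BKN construction \eqref{BKN}, on each annular region $B_{2S}(x_i) \setminus B_S(x_i)$ for $S \ge R$ one obtains harmonic ALE coordinates $H_i$ with $|\partial^{(k)}(H_{i*} g_i - g_{Euc})| \le C''(v,k) r^{-\mu-k}$, uniformly in $i$; combined with the faster decay of $J_i$ toward $J_{Euc}$ (rate $O(r^{-\mu})$ as in \eqref{HLJ}) this shows the ends are uniformly ALE. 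Since we are assuming the $g_i$ are all ALE with respect to a \emph{fixed} ALE coordinate of order $O(r^{-\mu})$, these harmonic coordinates can be matched (up to an element of $\Gamma \subset \U(2)$, eliminated by the rigidity of harmonic coordinates \cite{Bartnik}) to that fixed structure, so the limit orbifold $X_\infty$ inherits an ALE end diffeomorphic to $\RR^4/\Gamma$ with the same $\Gamma$.

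It remains to promote the convergence from $C^{k,\alpha}_{loc}$ on the compact pieces to the global weighted estimate $\|\psi_i^* g_i - g_\infty\|_{C^{k,\alpha}_{-\mu}(g_\infty)} < \epsilon(i \mid k, \delta)$ and likewise for $J_i$, on $X_\infty \setminus \sqcup_j B_\delta(p_j)$. Here I would split $X_\infty \setminus \sqcup_j B_\delta(p_j)$ into a large compact core $B_S(x_\infty)$ and the ALE end $\{r > S\}$. On the end, the uniform weighted bounds $|\partial^{(k)}(g_i - g_{Euc})| \le C'' r^{-\mu - k}$ and $|\partial^{(k)}(g_\infty - g_{Euc})| \le C'' r^{-\mu-k}$ (the latter inherited in the limit) control the difference by $C r^{-\mu}$, which can be made small in the weighted norm by choosing $S$ large; this is the mechanism producing the $\epsilon(i \mid k, \delta)$ quantity, which absorbs the fixed compact-region error for $i$ large and the far-end tail for $S$ large. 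On the compact core, where the weighted norm is equivalent to the ordinary $C^{k,\alpha}$ norm, the $C^{k,\alpha}_{loc}$ convergence from the first step finishes the job. Finally, the limit $g_\infty$ is scalar-flat K\"ahler since scalar-flatness and the K\"ahler condition pass to $C^2_{loc}$ limits, and it is an \emph{orbifold} metric by the standard removable-singularity analysis at the $p_j$ (the tangent cones at the $p_j$ are flat, the curvature is bounded in $L^2$, and one applies orbifold regularity as in \cite{BKN, TV2}). The main obstacle is the last step: carefully justifying that the harmonic ALE coordinates on the limit and on the $X_i$, a priori produced independently on overlapping annuli, can be glued and matched to the \emph{single fixed} ALE coordinate uniformly in $i$, so that the weighted differences are genuinely small --- this requires tracking the $\Gamma$-equivariance and the rigidity statement of \cite{Bartnik} quantitatively, and is where the bulk of the technical work lies.
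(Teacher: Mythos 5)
Your overall strategy matches the paper's: Tian--Viaclovsky compactness for the core, $\epsilon$-regularity plus the BKN harmonic coordinate construction for the end, and a gluing of diffeomorphisms between the two regimes. However, there is a genuine gap in the far-end estimate that would make the argument as written fail.

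You claim that the uniform bounds $|\partial^{(k)}(H_{i*}g_i - g_{Euc})| \le C'' r^{-\mu-k}$ (and likewise for $g_\infty$) control the difference $\psi_i^* g_i - g_\infty$ by $C r^{-\mu}$ on the far end $\{r>S\}$, and that this can be made small in the weighted $C^{k,\alpha}_{-\mu}$ norm by taking $S$ large. This does not work: by Definition~\ref{weighted_norm}, the $C^0_{-\mu}$ norm of a tensor decaying at rate exactly $O(r^{-\mu})$ is $\sup_{r>S} |\varphi|\, r^{\mu}$, which is merely \emph{bounded}, not small, uniformly in $S$. The tail does not shrink. What the paper actually exploits is that the BKN harmonic coordinates on the end of an SFK manifold with small curvature energy achieve a \emph{strictly faster} decay rate $O(r^{-\nu})$ with $-2 < -\nu < -\mu$ (the $\epsilon$-regularity of \cite{TV, TV3} gives $\|\nabla^{(k)}Rm\| \le C' r^{-2-\nu-k}$ for any such $\nu$, irrespective of the weaker rate of the fixed ALE coordinate). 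Because $\nu > \mu$, the difference on the end decays like $r^{-\nu}$, and its $C^0_{-\mu}$ norm over $\{r>S\}$ is bounded by $C\, S^{\mu - \nu} \to 0$. This extra room $\nu - \mu > 0$ is the engine that makes the cutoff-gluing of $\psi_i'$ with $H_i^{-1}\circ A_i \circ \pi$ produce a $C^{k,\alpha}_{-\mu}$-small error; without it the argument breaks precisely at the step you flagged as the main obstacle.

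A second, smaller inaccuracy: you describe the ambiguity in matching the harmonic coordinates as ``up to an element of $\Gamma$, eliminated by the rigidity of \cite{Bartnik}.'' In the paper's argument the relevant ambiguity is an element $A_i$ of ${\rm{SO}}(4)$ acting on the universal cover (i.e.\ an isometry of $\RR^4/\Gamma$), which cannot be eliminated but is absorbed by replacing $H_i$ with $A_i^{-1}\circ H_i$; the Bartnik rigidity is instead used to show that the universal-cover harmonic coordinate is $\Gamma$-equivariant and descends to the quotient at all. There is also no need to ``match to the single fixed ALE coordinate $\Psi$'' as you suggest: the conclusion only requires \emph{some} diffeomorphism $\psi_i$, and the paper builds one from scratch using $\psi_i'$ near the core and the BKN coordinates on the end.
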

\begin{proof}
For convenience, in the following of the proof, $C$ is denoted as a positive constant
with value that may vary line by line.
If $C$ depends on the subscript $i$ (index of the sequence) (or the superscript $k$ (degree of regularity)),
we will specify it as $C = C(i)$  (or $C(k)$).

The Hirzebruch signature theorem for an ALE SFK metric states that,  
\begin{align}
\label{taum2}
\tau(X)= - \frac{1}{12\pi^2}\int_{X} \Vert W^- \Vert^2dV_{g}  + \eta(S^3/\Gamma),
\end{align}
and the Chern-Gauss-Bonnet theorem in this setting  \cite{Hitchin, Nakajima} states that
\begin{align}
\label{chim2}
\chi(X)= \frac{1}{8\pi^2}\int_{X}\Big( \Vert W^- \Vert^2-\frac{1}{2} |E|^2\Big)dV_{g}  + \frac{1}{|\Gamma|} \Big).
\end{align}
Consequently, if the group $\Gamma$ is fixed, and all of the spaces are diffeomorphic, then there exists a constant $C$ so that 
\begin{align}
\int_{X_i} \Vert Rm \Vert_{g_i}^2 dV_i \leq C. 
\end{align}
By \eqref{BKN}, there exists an ALE coordinate
$H_i: {X_i\setminus B_R(x_i)}\rightarrow \RR^4/\Gamma$, such that 
\begin{align}
\label{harmonic_coordinate_decay_rate}
|\partial^{(k)}({H_i}_*g_i-g_{Euc})| < C(k)\cdot r^{-\nu-k},
\end{align}
where we can choose $-\nu$ between $-2<-\nu<-\mu$.
By our assumption of lower volume growth, by \cite[Theorem 1.1]{TV2} and 
\cite[Theorem~1.3]{TV3}, 
up to a subsequence, $(X_i,g_i,x_i)$ converges to $(X_\infty,g_\infty,x_\infty)$
in the pointed Gromov-Hausdorff sense. Since $J_i$ is parallel with respect to $g_i$, 
it is easy to see that there is a limiting complex structure $J_{\infty}$. 
Moreover, using \cite[Theorem~6.1]{TV2}, the limit $(X_\infty,J_{\infty}, g_\infty,x_\infty)$
is an ALE SFK orbifold. Without loss of generality, assume $x_\infty$ is the only energy concentration point in $X_\infty$.
Then for any $\delta>0$, $R>\delta>0$, there exists a diffeomorphism
\begin{align}
\psi'_i: A_{\delta,2R}(x_\infty)\rightarrow X_i
\end{align}
such that ${\psi'_i}^*g_i\xrightarrow{C^\infty} g_\infty$.
For a $R$ large enough (with its specific value to be determined later), there exists an ALE coordinate
\begin{align}
\pi: {X_\infty\setminus B_R(x_\infty)}\rightarrow \RR^4/\Gamma
\end{align}
such that $|\partial^{(k)}(\pi_* g_\infty-g_{Euc})| < C(k)\cdot r^{-\nu-k}$, where $r$ is the Euclidean distance to the origin.
Since on $A_{\delta,2R}(x_\infty)$, ${\psi'_i}^*g_i$ converges to $g_\infty$ smoothly, for each $\epsilon'>0$,
by choosing $R$ large enough, and when $i$ is sufficiently large, 
\begin{align}
H_i\circ{\psi'_i}\circ\pi^{-1} = A_i + Q_i
\end{align}
where $A_i$ is induced from a subgroup of $SO(4)$ acting on the universal cover of $\RR^4/\Gamma$, $|Q_i|<\epsilon'$. 
Since $A_i$ is induced from a subgroup of $SO(4)$,
$A_i^{-1}\circ H_i: {X_i\setminus B_R(x_i)}\rightarrow \RR^4/\Gamma$ is still an ALE coordinate with the same 
asymptotic rate.
Then we can extend ${\psi'_i}$ to a diffeomorphism ${\psi_i}$ from 
${X_\infty\setminus B_\delta(x_\infty)}$ to ${X_i\setminus B_\delta(x_i)}$ by defining
\begin{align}
\label{CG_diffeomorphism}
{\psi_i} = 
\begin{cases}
{\psi'_i}  &\text{ on $A_{\delta,R}(x_\infty)$} \\
H_i^{-1}\circ A_i\circ \pi &\text{ on $X_\infty\setminus B_{2R}(x_\infty)$}\\
(1-\chi(\frac{r(x)}{R})){\psi'_i} + \chi(\frac{r(x)}{R}) H_i^{-1}\circ A_i\circ \pi &\text{ on $A_{R,2R}(x_\infty)$},
\end{cases}
\end{align}
where $\chi:\RR_{\geq 0}\rightarrow \RR_{\geq 0}$ is a non-decreasing smooth function,
$\chi(t) = 0$ if $t\leq 1$, $\chi(t) = 1$ if $t\geq 2$, $r(\cdot)$ is the distance to $x_\infty$ with respect to 
the metric $g_\infty$.
Since $-\nu<-\mu$, for any $\epsilon'>0$, we can fix a constant $R>0$ large enough, such that, when
$i$ is sufficiently large, 
$\|\psi_i^*g_i - g_\infty\|_{C^{k,\alpha}_{-\mu}(X_\infty(B_\delta(x_\infty)))} < \epsilon'$.
The convergence of the complex structure follows from the convergence of the Riemannian metric, using the same argument as in \eqref{HLJ}. 
\end{proof}

\subsection{Bubble trees} The degeneration of convergence at ``energy concentration points'' can be understood through a process called ``bubbling''. 
The sequence $(X_i,g_i,x_i)$ in Lemma \ref{uniform_diffeom} converges to an orbifold limit $(X_\infty,g_\infty,x_\infty)$.
By studying different scales of convergence toward the energy concentration point $x_\infty$, there is a ``bubble tree''
structure which captures the topological information that ``disappears'' in the orbifold limit.

At any energy concentration point,  we choose the smallest fixed $\delta>0$, and $r_i\to 0$, such that in $B_\delta(x_i)$
\begin{align}
\int_{B_\delta(x_i)\setminus B_{r_i}(x_i)}\|Rm(g_i)\|^2 dV_g= \frac{\epsilon_0}{2}.
\end{align}
The rescaled sequence 
\begin{align}
(Y_i, g'_i, y_i) = \Big(B_{\delta}(x_i),\frac{1}{r_i^2}g_i,x_i \Big)
\end{align} 
then converges to an ALE orbifold limit $(Y_\infty,g'_\infty,y_\infty)$
in the pointed Cheeger-Gromov sense, where the limit is called the ``first bubble''.
For any energy concentration point $p\in Y_\infty$ in the rescaled limit, 
there exists a sequence of points $p_i\in Y_i$ that converges to $p$,
and high curvature regions $B_{\delta'}(p_i)\subset Y_i$ for some $\delta'>0$, and $r'_i\to 0$, such that
\begin{align}
\int_{B_{\delta'}(p_i)\setminus B_{r'_i}(p_i)} \|Rm(g'_i)\|^2 dV_{g'_i} = \frac{\epsilon_0}{2},
\end{align}
and the rescaled sequence
\begin{align}
(Z_i, g''_i, z_i) = \Big(B_{\delta'}(p_i),\frac{1}{{r'_i}^2}g'_i, p_i \Big)
\end{align}
converges to an ALE orbifold $(Z_\infty,g''_\infty,z_\infty)$. The limit
$(Z_\infty,g''_\infty,z_\infty)$ is called a ``deeper bubble'' to the previous bubble $(Y_\infty,g'_\infty,y_\infty)$.
Iteratively, for each energy concentration point in a bubble, we can consider the rescaled limit (by energy scale)
and obtain an ALE orbifold limit as a deeper bubble.
Since the total energy is finite and each deeper bubble loses a definite amount of energy, there are at most finite iteration steps.
The smooth bubbles with no energy concentration points are called the ``deepest bubbles''.
By gluing each deeper bubble to the corresponding singularity in the previous bubble, we obtain a topological space
which is called the ``bubble tree''. The bubble tree is homeomorphic to $B_\delta(x_i)$ for $i$ sufficiently large. We refer the reader to \cite{Bando} for a more detailed description of the bubbling process in the Einstein case, and \cite{TV2} for the SFK case. 

If the bubble tree
has only 1 branch, then the original manifold $X_i$ for $i$ sufficiently large 
is diffeomorphic to $X_{\infty} \# Y_1 \# Y_2 \#\ldots \# Y_r$, where
$Y_1$ is the first bubble, and $Y_r$ is the deepest bubble.
The notation $\#$ stands for a generalized connected sum, which is obtained by attaching the boundary of a truncated ALE space onto the boundary of a punctured neighborhood of an orbifold point.  By the Mayer-Vietoris sequence, it follows that 
\begin{align}
\label{b2sum}
b_2(X_i) = b_2(X_{\infty}) + \sum_{i = 1}^r b_2(Y_{i}).
\end{align}
A similar formula holds in the case of several branches.

 In general, there can be energy concentration points which are smooth points of the limit space. In this case, the first bubble will be an asymptotically flat (AE) orbifold, i.e., an ALE space with $\Gamma = \{e\}$. 
While these types of bubbles can certainly appear in general, one can rule out such bubbles which are topologically trivial.  
\begin{lemma}
\label{tblemma}
 If $(X,g,J)$ is a AE SFK orbifold with $b_2(X) = 0$, then $(X,J)$ is biholomorphic to $\CC^2$ and $g$ is the flat metric. 
\end{lemma}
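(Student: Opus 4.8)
The plan is to show first that such an $X$ is \emph{minimal} — it contains no $(-1)$-curves — and is simply connected (since it is AE with trivial group, $\Gamma = \{e\}$), and then to combine the vanishing of $b_2$ with the structure of one-convex surfaces to force $X$ to be biholomorphic to $\CC^2$. The starting observation is that an AE SFK orbifold with $b_2(X) = 0$ must in fact be smooth: any orbifold point would contribute a nontrivial term to the Chern–Gauss–Bonnet formula \eqref{chim2} and, via the associated exceptional divisor in the minimal resolution, to $b_2$; more directly, a nontrivial local group at an orbifold point forces a nonzero contribution $1 - 1/|\Gamma_p| > 0$ to $\chi(X)$ while $b_2 = 0$ and $b_1 = 0$ give $\chi(X) = 1$, and one checks this is incompatible with the curvature integral being nonnegative unless there are no orbifold points. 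So $X$ is a smooth AE SFK surface, hence in particular a smooth ALE K\"ahler surface with $\Gamma = \{e\}$, which by the discussion after the one-convexity definition is one-convex.

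Next I would invoke the one-convexity: $X$ is a point modification of a Stein surface $Y$ (its Remmert reduction), obtained by replacing finitely many points of $Y$ with compact analytic sets, i.e. with trees of curves. The second homology of $X$ is generated by the components of these exceptional sets (since $Y$ is Stein, hence has the homotopy type of a CW complex of real dimension $\leq 2$, and being a normal surface singularity germ-wise contractible it carries no $2$-cycles locally); therefore $b_2(X) = 0$ forces the modification to be trivial, so $X = Y$ is itself Stein. Alternatively, and perhaps more cleanly, I would argue that $b_2(X) = 0$ together with $W^+ \equiv 0$ (the SFK condition with the complex orientation) forces the self-dual Weyl curvature to vanish identically from \eqref{taum2}, because $\tau(X) = b_2^+ - b_2^- = 0$ and $\eta(S^3/\Gamma) = \eta(S^3) = 0$, so $\int_X \|W^-\|^2 = 0$; a scalar-flat K\"ahler surface with $W^- \equiv 0$ is anti-self-dual and scalar-flat and self-dual, hence conformally flat, hence (being K\"ahler and Ricci-bounded at infinity, AE) the curvature integral in \eqref{chim2} is $\int_X (-\tfrac12|E|^2)\,dV = \chi(X) - 1 = 0$, forcing $E \equiv 0$, so $X$ is Ricci-flat and flat. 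A simply connected complete flat K\"ahler surface asymptotic to $\RR^4$ is $\CC^2$ with the Euclidean metric.

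The main obstacle I anticipate is making the "no orbifold points / smooth" step fully rigorous: one must be careful that the Chern–Gauss–Bonnet and signature formulas \eqref{taum2}, \eqref{chim2} are being applied with the correct orbifold correction terms summed over \emph{all} orbifold points, and that the asymptotic group at infinity being trivial ($\Gamma = \{e\}$, as required for an AE space) does not get confused with local orbifold groups. Once flatness is established, identifying the space with $\CC^2$ is standard: a complete flat simply connected $4$-manifold is $\RR^4$, the parallel complex structure together with the AE condition pins down the complex structure as $J_{Euc}$ up to a complex-linear change of coordinates (using the asymptotic rate \eqref{HLJ} and analytic continuation as in the proof of Proposition~\ref{minimal_ALE_moduli}), and the K\"ahler form is then the standard one. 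I would also double-check the degenerate possibility flagged in the remark before the lemma — that the bubble could a priori be $\oo_{\CC P^1}(-1)$ with the Burns metric — is excluded here precisely because that space has $b_2 = 1 \neq 0$, so the hypothesis $b_2(X) = 0$ rules it out.
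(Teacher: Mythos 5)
There is a genuine gap in your proposal, concentrated in the step that asserts $X$ has no orbifold singularities. Both versions of this step that you offer fail.

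Your first argument says that an orbifold point would ``contribute to $b_2$ via the associated exceptional divisor in the minimal resolution.'' But $b_2(X)$ is the second Betti number of $X$ itself, not of its minimal resolution $\widetilde{X}$: a quotient singularity contributes nothing to $b_2(X)$ (e.g.\ $\CC^2/\ZZ_n$ has $b_2 = 0$ despite the orbifold point), so this does not rule out orbifold points. Your ``cleaner'' alternative applies the \emph{smooth} ALE signature formula~\eqref{taum2} (which has only the $\eta(S^3/\Gamma)$ correction at infinity) to conclude $W^- \equiv 0$ from $\tau(X) = 0$ and $\eta(S^3) = 0$. If, however, $X$ has an orbifold point with local group $\Gamma'$, the correct formula is the orbifold version~\eqref{taum4}, which carries an extra $-\eta(S^3/\Gamma')$ term. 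Since $\eta(S^3/\Gamma')$ can be strictly negative (for instance $\eta(S^3/\ZZ_3) = -2/9$ for the $A_2$ singularity, by~\eqref{etacyclic}), the signature identity then reads $\tfrac{1}{12\pi^2}\int_X \|W^-\|^2 = -\eta(S^3/\Gamma') > 0$, and one cannot conclude $W^- \equiv 0$. The Chern--Gauss--Bonnet argument for $E \equiv 0$ then also collapses, because~\eqref{chim4} likewise picks up an extra $1 - 1/|\Gamma'|$ term and the integrand $\|W^-\|^2 - \tfrac{1}{2}|E|^2$ is not of definite sign. So the orbifold corrections are not a bookkeeping issue to ``be careful about''; they genuinely block the topological route to flatness until smoothness is established by other means.

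The paper's proof reverses your intended implication: it proves $X \simeq \CC^2$ \emph{first}, and deduces flatness \emph{second}, rather than the other way around. It passes to the minimal resolution $\widetilde{X}$, glues in Lock--Viaclovsky ALE K\"ahler metrics at the resolved orbifold points to endow $\widetilde{X}$ with an ALE K\"ahler structure, and then invokes the classification of Hein--LeBrun \cite[Proposition~4.3]{HL15} to identify $\widetilde{X}$ with an iterated blow-up of $\CC^2$. Since $b_2(X) = 0$, every $(-1)$-curve of $\widetilde{X}$ would have to be contracted by $\widetilde{X} \to X$, yet the exceptional fibers of a minimal resolution contain no $(-1)$-curves; hence $\widetilde{X} = \CC^2 = X$, which in particular shows $X$ is smooth. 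Only at this point are the \emph{smooth} AE signature and Chern--Gauss--Bonnet formulas~\eqref{taum3},~\eqref{chim3} applied (with $\tau(\CC^2) = 0$ and $\chi(\CC^2) = 1$) to force $W^- \equiv 0$ and $E \equiv 0$, hence flatness. To repair your proposal you would need to import this geometric input (the gluing step and the Hein--LeBrun classification, or something of equal strength) before appealing to the curvature integrals.
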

\begin{proof}
Consider the minimal resolution of $(\widetilde{X}, \widetilde{J})$ of $(X,J)$. 
By a basic local gluing argument on the level of K\"ahler potentials (see \cite{Arezzo06} and also \cite{ALM})
we can glue on Lock-Viaclovsky ALE metrics (see \cite{LV14}) on resolutions at the orbifold points to show that this resolution admits an ALE K\"ahler metric. By \cite[Proposition 4.3]{HL15}, $(\widetilde{X}, \widetilde{J})$ is biholomorphic to $\CC^2$ blown-up at finitely many points. 
Since $b_2(X) = 0$, this implies that $X$ is obtained from $\widetilde{X}$ by blowing down all possible holomorphic curves, 
and is therefore biholomorphic to $\CC^2$.  The Hirzebruch signature theorem for 
an AE SFK metric states that,  
\begin{align}
\label{taum3}
\tau(X)= - \frac{1}{12\pi^2}\int_{X} \Vert W^- \Vert^2dV_{g},
\end{align}
since $\tau(\CC^2) = 0$, this implies that $W^{-} \equiv 0$. The Chern-Gauss-Bonnet theorem in this setting  states that
\begin{align}
\label{chim3}
\chi(X)= \frac{1}{8\pi^2}\int_{X}\Big( \Vert W^- \Vert^2-\frac{1}{2} |E|^2\Big)dV_{g}  + 1,
\end{align}
and since $\chi(\CC^2) = 1$, this implies that $E \equiv 0$, and consequently $g$ is flat. 
\end{proof}

\section{Compactness I. Convergence of birational structure}
\label{uniform}
In this section, we will investigate more closely the pointed Cheeger-Gromov convergence of the sequence of metrics in Theorem~\ref{compactness}. 
By results of Tian-Viaclovsky discussed above in Section~\ref{CGsubsec},  a subsequence converges to an ALE SFK metric. The main issue here is there could be a ``jump'' of complex 
structure at the limit, or a ``jump'' of birational type of the limit, even if every metric in the sequence is biholomorphic.  
For example, if we rescale down an ALE SFK metric on a Stein surface $X$ by $r_i^2\cdot g$, $r_i\to 0$, 
the pointed Cheeger-Gromov limit is the flat cone $\CC^2/\Gamma$. This limit is not birationally equivalent to $X$ since $X$ is Stein and smooth.
However, note that in the setting of Theorem~\ref{compactness} with fixed complex structure 
and varying K\"ahler classes, such rescaling is excluded.  Note also that as of yet,  we do not know that the convergence is uniform at infinity,
which is what we will prove next (we do not even know yet that the group at infinity of the limit is the same for the limit as for the sequence).

Let $\Psi:X\setminus K\rightarrow \RR^4/\Gamma$ be an ALE coordinate of order $O(r^{-\mu})$ for $(X,J,g_0,x_0)$,
where $-2<-\mu<-1$. Recall as discussed in Section~\ref{versal} above, 
there exist holomorphic functions
$u^1,\ldots,u^N$ satisfying certain polynomial relations that determine the birational type of $(X,J)$.
To prove the convergence of the birational structure, we will need to show convergence of $u^j$ in a strong sense 
after the uniform Cheeger-Gromov diffeomorphism is applied. 
\begin{proposition}
\label{uniform_rate}
Let $(X,J,g_i)$ be the sequence of ALE SFK metrics as in Theorem \ref{compactness} with group $\Gamma$ at infinity. Then 
there exist base points $x_i\in X$ such that the following holds:
\begin{enumerate}
\item[(1)]
Up to a subsequence, $(X,J,g_i,x_i)$ 
pointed Cheeger-Gromov converges with a uniform ALE asymptotic rate of order $O(r^{-\mu})$
to an ALE SFK  orbifold $(X_\infty,J_\infty,g_\infty,x_\infty)$. In particular, the group at infinity of the limit is also $\Gamma$. 
\item[(2)]
The limit space $X_\infty$ is birationally equivalent to $X$.
\item[(3)]
There exists a constant $R>0$, such that all holomorphic
curves are contained in geodesic ball $B_R(x_i,g_i)$ when $i$ is sufficiently large.
\end{enumerate}
\end{proposition}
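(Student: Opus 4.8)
The three assertions are established in turn; part (1) sets up the analytic framework, and part (2) is where the real work lies. For (1), I would first observe that the total curvature energy is uniformly bounded: combining the ALE Chern--Gauss--Bonnet and signature formulas \eqref{chim2}, \eqref{taum2} with $R_{g_i}\equiv 0$ bounds $\int_X\|Rm(g_i)\|^2\,dV_{g_i}$ by a constant depending only on $\chi(X),\tau(X),|\Gamma|$. Together with the noncollapsing hypothesis, the Tian--Viaclovsky compactness theory \cite{TV2,TV3} then gives, after passing to a subsequence and a suitable choice of base points $x_i$, pointed Cheeger--Gromov convergence to an ALE SFK orbifold. To upgrade this to convergence with a \emph{uniform} ALE asymptotic rate of order $O(r^{-\mu})$ (with $-\mu=\delta_0$) in the sense of Definition~\ref{uniform_asymptotic_rate} -- and thereby to see that the group at infinity of the limit is again $\Gamma$ -- I would invoke Lemma~\ref{uniform_diffeom}, whose only nontrivial hypothesis is that $\int_{X\setminus B_R(x_i,g_i)}\|Rm(g_i)\|^2\,dV_{g_i}<\epsilon_0/2$ for a fixed $R$ independent of $i$. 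This I would verify by contradiction: a failure would, after a point-selection and rescaling argument, produce a nontrivial bubble at a point receding to spatial infinity. Since the end of $X$ is modeled on the smooth flat $(\RR^4\setminus\bar B)/\Gamma$, such a bubble is an AE scalar-flat K\"ahler orbifold carrying at least $\epsilon_0/2$ of energy, hence non-flat, hence (by the contrapositive of Lemma~\ref{tblemma}) has $b_2>0$ and contains a compact holomorphic curve. But such a bubble is a limit of rescalings $\lambda_i^2 g_i$, and its K\"ahler class is computed from $\lambda_i^2\kappa_i$ paired against curve classes lying in the finitely generated group $H_2(X)$; this is incompatible with the uniform two-sided bound $0<c\le\langle\kappa_i,[C]\rangle\le C$ on the areas of holomorphic curves -- valid because $\kappa_i\to\kappa_\infty$ lies in the interior of $\KK(X,J,\omega_0,\delta_0)$ -- i.e. with the fact that holomorphic curves in $X$ can neither shrink to a point nor escape into the asymptotically flat end. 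This contradiction supplies hypothesis (3) of Lemma~\ref{uniform_diffeom} and hence proves (1).

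For part (2), recall from Section~\ref{versal} that the birational type of $(X,J)$ as a deformation of $\CC^2/\Gamma$ is encoded by finitely many holomorphic functions $u^1,\dots,u^N$ on $X$ -- arising from sections of a power of the line bundle attached to the orbifold compactification $\hat X=X\cup D$ -- together with the polynomial relations they satisfy, which cut out the affine model $X'=u(X)\subset\CC^N$. The plan is to transplant this data to $X_\infty$ via the uniform Cheeger--Gromov diffeomorphisms $\psi_i\colon X_\infty\setminus\sqcup_j B_\delta(p_j)\to X$ from part (1). Since $J$ is fixed and $\psi_i^*J\to J_\infty$ in the weighted norm, $\bar\partial_{J_\infty}(u^j\circ\psi_i)=(\bar\partial_{J_\infty}-\bar\partial_{\psi_i^*J})(u^j\circ\psi_i)$ tends to $0$ on compact subsets of $X_\infty\setminus\{p_1,\dots,p_m\}$. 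The crucial a priori estimate is a uniform $C^0$ bound for $u^j\circ\psi_i$ on such compacta, and this is the main difficulty: by part (1) the metrics $g_i$ are uniformly ALE outside a fixed-radius $g_i$-ball about $x_i$, so any fixed-radius $g_i$-ball about $x_i$ is contained in $\Psi^{-1}(\{r_{Euc}\le\rho\})$ for a fixed $\rho$, and since the convergence is pointed, $\psi_i$ maps any fixed compact $K\subset X_\infty\setminus\{p_j\}$ into such a ball, hence into a fixed compact subset of $X$, on which $u^j$ is bounded (plurisubharmonicity of $|u^j|^2$ together with the uniform control at infinity moreover gives $|u^j\circ\psi_i|=O(r^{d_j})$ for the fixed growth order $d_j$). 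Interior $\bar\partial$-estimates then extract a subsequence with $u^j\circ\psi_i\to u^j_\infty$ in $C^\infty_{loc}(X_\infty\setminus\{p_j\})$, with $u^j_\infty$ holomorphic there; being locally bounded near each $p_j$, these extend holomorphically across the (orbifold) points $p_j$ by the Riemann--Hartogs extension theorem in complex dimension two. Passing to the limit in the polynomial relations shows $u_\infty=(u^1_\infty,\dots,u^N_\infty)$ maps $X_\infty$ onto $X'$; and since $X_\infty$ has the same group $\Gamma$ at infinity and $u^j_\infty$ has the same asymptotics near infinity as $u^j$, the map $u_\infty$ is generically injective, so $X_\infty$ is birational to $X'$, hence to $X$, which is (2).

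For part (3), every compact holomorphic curve in $X$ lies in the exceptional set $E$ of the Remmert reduction $X\to W$, a fixed compact analytic set. Choose $R$ large enough that, by part (1), $g_i$ is so $C^2$-close to $g_{Euc}$ on $X\setminus B_R(x_i,g_i)$ that the pullback $f$ of $r_{Euc}^2$ is strictly plurisubharmonic there, uniformly in $i$. If some irreducible component $C_i\subset E$ were to meet $X\setminus B_{R'}(x_i,g_i)$ with $R'$ arbitrarily large, then $f$ restricted to the normalization of $C_i$ would be strictly subharmonic on the portion where $r_{Euc}>R$ and would attain its maximum on the compact curve; the maximum cannot lie in the strictly subharmonic locus, so it lies in $\{r_{Euc}\le R\}$, forcing $C_i\subset\{r_{Euc}\le R\}$, which by part (1) again lies in a fixed-radius $g_i$-ball about $x_i$ -- a contradiction. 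Hence there is a fixed $R$ with $E\subset B_R(x_i,g_i)$ for all sufficiently large $i$, which is (3). The step I expect to be the real obstacle is the a priori control in part (2): carrying the holomorphic functions $u^j$ through the degenerating diffeomorphisms $\psi_i$ with uniform bounds over all of $X_\infty$ minus the finitely many bubbling points, and then checking that the limiting map genuinely recovers $X'$ rather than degenerating; the uniform ALE structure from part (1) (controlling how much of $X$ a bounded $g_i$-ball can see) and the plurisubharmonicity of $|u^j|^2$ are exactly what make this work.
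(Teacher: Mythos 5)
Your high‑level outline is correct in structure — uniform energy bound plus Tian–Viaclovsky gives a CG limit, Lemma~\ref{uniform_diffeom} upgrades this to uniform ALE rate, holomorphic functions $u^1,\dots,u^N$ transported through $\psi_i$ reconstruct the birational type, and strict plurisubharmonicity of the coordinate function traps the holomorphic curves — and part~(3) of your argument is essentially the paper's. However there are two genuine gaps.

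\textbf{Verification of the energy bound hypothesis.} You argue that failure of the bound produces a bubble at a receding point which is an AE SFK orbifold with $b_2>0$, hence containing a compact holomorphic curve, and you then invoke boundedness of $\langle\kappa_i,[C]\rangle$ to get a contradiction. The problem is that the CG diffeomorphisms are \emph{not} holomorphic, so a holomorphic curve in the bubble limit does not a priori correspond to a $J$--holomorphic curve in $(X,J)$ whose area is constrained by $\kappa_i$; it corresponds only to a smooth surface in $X$ that is \emph{approximately} holomorphic, on which $\int\omega_i$ has no definite sign and carries no lower bound. The paper's argument is specifically designed to bridge exactly this gap, and it uses the minimality of $X$ in an essential way that your sketch does not: after scaling down by $r_i^{-2}$, the putative smooth concentration region $B(i,\delta)$ is shown (via Lempert's embeddability theorem for perturbed CR $3$--spheres, \cite{Lempert94}) to be an iterated blow‑up of a $4$--ball \emph{in the fixed complex structure $J$}, hence to contain a genuine $(-1)$--curve in $(X,J)$, contradicting minimality. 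Without this step one cannot pass from ``bubble has a compact holomorphic curve'' to ``$X$ has a compact holomorphic curve of small area.'' In addition, your argument only treats the case where a concentration point forms; the paper's scaling‑down argument also handles the spread‑out case (and the identification of the scaled‑down limit with $(\CC^2/\Gamma, g_{Euc})$ is nontrivial and itself requires the full holomorphic‑function machinery).

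\textbf{Convergence of the birational structure.} You propose a uniform $C^0$ bound for $u^j\circ\psi_i$ on compacta plus interior $\bar\partial$--estimates to pass to the limit. This is insufficient. The functions $u^j$ have polynomial growth of specified degrees $d_j$ at infinity, and the danger is that after composing with the degenerating $\psi_i$, the limit $u^j_\infty$ collapses in degree (becomes constant, or of degree $<d_j$), in which case $u_\infty$ would not embed $X_\infty\setminus K$ and one could not conclude $R(X)\cong R(X_\infty)$. The paper handles this by normalizing (multiplying by constants $c_i$ so that $\Vert{H_i}_*(c_i u)\Vert_{C^{0,\alpha}_{d_1}}=1$), then using harmonic expansions in the fixed ALE end and the weighted elliptic estimate \eqref{f_estimate} to decompose ${H_i}_*(c_i u)$ into a leading $d_1$--degree harmonic part, a small error $\xi_i$, and a lower‑order remainder $v_i$, and finally proving the normalization constants are uniformly bounded by comparing the growth ratio $I_{H_\infty}$ against $I_\Psi$ via Bartnik's rigidity of harmonic coordinates. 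Your claim that $u_\infty$ is ``generically injective since $u^j_\infty$ has the same asymptotics'' begs exactly the question: the content of the paper's degree‑by‑degree induction is precisely to establish that the degrees are preserved and the polynomial relations are identical, yielding the ring isomorphism $R(X)\cong R(X_\infty)$ from which the embedding and birational equivalence follow.

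Finally, a smaller point: in the last step of your energy‑bound contradiction you write that the incompatibility is ``with the uniform two‑sided bound $0<c\leq\langle\kappa_i,[C]\rangle\leq C$''; but as noted, the relevant curve classes may be null‑homologous in $X$ (the bubble lives in a topologically trivial region of the end), and the correct statement is that a $J$--holomorphic curve can never be null‑homologous — which is only useful if you first know the curve is $J$--holomorphic, which is precisely what the paper's Lempert argument supplies.
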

\begin{proof}
By the convergence results discussed in Section~\ref{CGsubsec} above, 
for any sequence of basepoints $x_i \in X$, there exists a 
pointed Cheeger-Gromov limit 
\begin{align}
(X,J,g_i, x_i) \rightarrow 
(X_{\infty}, J_{\infty}, g_{\infty}, x_{\infty}).
\end{align} 
Without loss of generality, we can assume that $x_\infty$ is the only energy concentration point in the limit $X_\infty$, and that $x_i$ is chosen so that 
 $\sup_{X}(\|Rm(g_i)\|_{g_i})$ is obtained at $x_i$.

First, let us assume the sequence $\{(X,J,g_i,x_i)\}$ 
has a uniform ALE energy bound, i.e., that the assumption (3) in Lemma~\ref{uniform_diffeom} is satisfied. 
(We will then prove below that this assumption is necessarily satisfied). 
Under this assumption, by Lemma~\ref{uniform_diffeom}, there exist diffeomorphisms
\begin{align}
\psi_i: X_\infty\setminus B_\delta(x_\infty)\rightarrow X
\end{align}
such that, $\|\psi_i^*g_i-g_\infty\|_{C^{k,\alpha}_{-\mu}(g_\infty)}< \epsilon(i | k,\delta)$,
$\|\psi_i^*J-J_\infty\|_{C^{k,\alpha}_{-\mu}(g_\infty)}<\epsilon(i | k,\delta)$,
for $-2<-\mu<-1$.

Under this assumption, we next analyze the birational structure of the limit space.
Recall that, for each $(X,J,g_i,x_i)$, there exists a harmonic coordinate
$H_i: X\setminus B_R(x_i)\rightarrow \RR^4/\Gamma$, under which ${H_i}_*g_i,{H_i}_*J$ are asymptotic
to $g_{Euc},J_{Euc}$ uniformly of rate $O(r^{-\mu})$.
In the following, we will fix an $R>0$, and consider $({H_i}_*g_i,{H_i}_*J)$ on the fixed space
$A_{R,\infty}(0)\subset \RR^4/\Gamma$. Furthermore, all the norms used in the following are over the space
$A_{R,\infty}(0)$.

Recall the construction in Section~\ref{versal}.
Since $(X,J)$ is a K\"ahler surface with an ALE coordinate $\Psi$, $X$ can be compactified analytically to $\hat{X}$,
and there exist holomorphic functions $u^1,\ldots,u^N$ that determines the birational structure of $X$, obtained from
holomorphic sections on $\hat{X}$.
Define the degree of a function $f$ on $X$ with respect to the coordinate $\Psi$ as
\begin{align}
d_{\Psi}(f) = \lim_{r\to\infty} \left(\frac{\log(\sup_{p\in S_r}|f(\Psi^{-1}(p))|)}{\log(r)}\right) 
\end{align}
where $r$ is the $g_{Euc}$-radius and $S_r$ is the $r$-sphere centered at $\{0\}$ in $A_{R,\infty}(0)$.
For each $u^j$ above, $d_{\Psi}(u^j)$ is finite.
Then we can rearrange $u^1,\ldots,u^N$ in the increasing order of $d_{\Psi}$, and we have positive integers $d_1,\ldots,d_l$,
such that there are $n_j$-th many elements among $u^1,\ldots,u^N$ that have degree of $d_j$, and $d_j<d_{j+1}$,
$\sum_{j=1}^l n_j = N$.
Define $\H$ as the $\CC$-algebra of all holomorphic function on $X$ of finite $d_{\Psi}$-degree.
We can assume $\{u^1,\ldots,u^N\}$ is a minimal set of generators of $\H$.

In a similar fashion,  we can define $d_{H_\infty}$ for holomorphic functions on $X_\infty$, with 
respect to the ALE coordinate $H_{\infty}$ on $X_{\infty}$. 
There exist 
holomorphic functions $u^1_\infty,\ldots, u^{N}_\infty$
on $X_\infty$, which comprises a minimum set of generators of
the $\CC$-algebra of holomorphic functions on $X_\infty$ of finite $d_{H_\infty}$-degree.

We claim that
$l_\infty = l, n^\infty_j = n_j, d^\infty_j = d_j$.
This follows by constructing the deformation to the normal cone for both $X$ and $X_\infty$ as described above in \eqref{defnc}. 
The line bundle $L$ is deformed to $\underline{L}$ along the deformation as $t \to 0$. 
Since $H^0(D,\underline{L}^k)\simeq H^0(D,L^k))$, there exists $\underline{s_j}\in H^0(C(D),\underline{L}^k)$ that corresponds with $s_j$.
The normal cone $C(D)$ admits a flat conical metric $g_C$, so we can define the degree $\underline{d}_j$ for each $\underline{u}^j = \frac{\underline{s}_j}{\underline{s}_0}$
in a similar way. The metric cone $(C(D),g_C)$ 
is the tangent cone at infinity of $(X,g_0)$
and $\underline{u}^j$ is the scale-down limit of $u^j$,
so it follows that $\underline{d}_j = d_j$, and consequently $\underline{n}_j = n_j, \underline{l} = l$.
Applying the same argument to $(X_\infty, J_{\infty})$ proves the claim. 

Next, we study the convergence of the generating holomorphic functions.
Let $u$ be a holomorphic function on $X$ with $d_{\Psi}(u) = d_1$, which is the lowest degree of a non-constant holomorphic function.
Since $\|{H_i}_*u\|_{C^{0,\alpha}_{d_1}({H_i}_*g_i)}$ is finite,
there exists a sequence of positive constants $c_i$, such that on $A_{R,\infty}(0)$,
$\|{H_i}_*(c_i u)\|_{C^{0,\alpha}_{d_1}({H_i}_*g_i)} = 1$.
Up to a subsequence, ${H_i}_*(c_i u)$ pointwise converges to a limit function $w$,
because on any annulus $A_{R,2^{k+1}R}(0)$, the usual H\"older norm is uniformly bounded.
We will next use elliptic theory to refine the convergence. 

Choose $\Delta_{{H_\infty}_*g_\infty}$-harmonic functions $h_1,\ldots,h_m$ of $d_{H_\infty}$-degree $d_1$,
such that for any function which is $\Delta_{{H_\infty}_*g_\infty}$-harmonic and of $d_{H_\infty}$-degree $d_1$, its leading term
can be represented as a linear combination of $h_1,\ldots,h_m$.
Since
${H_i}_*g_i$ converges to ${H_\infty}_*g_\infty$ in any $C^{k,\alpha}_{-\mu}$-norm, 
for any $C^2$ function $f$, we have the pointwise bound
\begin{align}
\label{laplace_compare1}
|(\Delta_{{H_\infty}_*g_\infty}-\Delta_{{H_i}_*g_i}) f| &< \epsilon(i)(r^{-\mu}\cdot |\nabla^2_{{H_\infty}_*g_\infty}f|+
r^{-\mu-1}\cdot |\nabla_{{H_\infty}_*g_\infty}f|),
\end{align}
where $\epsilon(i) \to 0$ as $i \to \infty$,
and for any function $f$ with bounded $C^{2,\alpha}_{\nu}({H_\infty}_*g_\infty)$-norm, we have
\begin{align}
\label{laplace_compare2}
\|(\Delta_{{H_\infty}_*g_\infty}-\Delta_{{H_i}_*g_i}) f\|_{C^{0,\alpha}_{\nu-\mu -2}({H_\infty}_*g_\infty)} &< \epsilon(i\,|\,\nu)\cdot \|f\|_{C^{2,\alpha}_{\nu}({H_\infty}_*g_\infty)}.
\end{align}
where $\epsilon(i\,|\,\nu)\to 0$ as $i\to \infty$ for each fixed weight $\nu$. 
By the classical elliptic estimate in weighted norms (see \cite{Bartnik}), 
we have
\begin{align}
\label{f_estimate}
\|f \|_{C^{2,\alpha}_{d_1}({H_\infty}_*g_\infty)} < C\cdot 
(\|f\|_{C^{0,\alpha}_{d_1}({H_\infty}_*g_\infty)} + \|\Delta_{{H_\infty}_*g_\infty}f\|_{C^{0,\alpha}_{d_1 -2}({H_\infty}_*g_\infty)}).
\end{align}
Since ${H_i}_*(c_i u)$ is $\Delta_{{H_i}_*g_i}$-harmonic and
$\|{H_i}_*(c_i u)\|_{C^{0,\alpha}_{d_1}({H_\infty}_*g_\infty)}$ is uniformly bounded,
the above estimates imply 
$\|{H_i}_*(c_i u)\|_{C^{2,\alpha}_{d_1}({H_\infty}_*g_\infty)}<C$ for some uniform $C>0$. 
In particular, by estimate \eqref{laplace_compare2}, and the invertibility 
of the Laplacian on the complement of a ball, 
there exists a function $\xi_i\in C^{2,\alpha}_{d_1-\mu}({H_\infty}_*g_\infty)$,
such that $\Delta_{{H_\infty}_*g_\infty}\xi_i = \Delta_{{H_\infty}_*g_\infty}({H_i}_*(c_iu))$ and
\begin{align}
\label{xi_estimate}
\begin{split}
\|\xi_i\|_{C^{2,\alpha}_{d_1-\mu}({H_\infty}_*g_\infty)} < 
C\cdot \|\Delta_{{H_\infty}_*g_\infty} {H_i}_*(c_i u)\|_{C^{0,\alpha}_{d_1-\mu -2}({{H_\infty}_*g_\infty})} 
< C\cdot \epsilon(i\,|\,d_1).
\end{split}
\end{align}
By existence of harmonic expansions, we have the decomposition 
\begin{align}
{H_i}_*(c_i u) = \xi_i + \sum_{j=1}^{m} a_{i,j} h_{j} + v_i
\end{align}
for some functions $v_i$ on $A_{R,\infty}(0)$. Then by the estimate of ${H_i}_*(c_i u)$ above and 
\begin{align}
\|\xi_i\|_{C^{2,\alpha}_{d_1}({H_\infty}_*g_\infty)} < C\cdot \|\xi_i\|_{C^{2,\alpha}_{d_1-\mu}({H_\infty}_*g_\infty)} < C\cdot \epsilon(i\,|\,d_1),
\end{align}
we have 
$\sum_{j=1}^m |a_{i,j}| <C$ for some constant $C>0$, 
and there exists finite limit $a_j = \lim_{i\to\infty} a_{i,j}$ for each $1\leq j\leq m$.
Furthermore, 
$v_i$ is a $\Delta_{{H_\infty}_* g_\infty}$-harmonic function with degree $d_{H_\infty}(v_i)<d_1$.
By the elliptic estimate \eqref{f_estimate}, for $0<\epsilon'<1$, we have $\|v_i\|_{C^{2,\alpha}_{d_1-\epsilon'}}<C$ for a uniform $C>0$.
Since $C^{2,\alpha}_{d_1-\epsilon'}({H_\infty}_*g_\infty)$ is compactly embedded into $C^{0,\alpha}_{d_1}({H_\infty}_*g_\infty)$,
we have $v_i$ converges strongly in $C^{0,\alpha}_{d_1}({H_\infty}_*g_\infty)$-norm on $A_{R,\infty}(0)$.
Then by the analysis above, $v_i, \sum_{j=1}^m a_{i,j}h_j, \xi_i$ converge strongly in $C^{0,\alpha}_{d_1}({H_\infty}_*g_\infty)$-norm on $A_{R,\infty}(0)$
as $i\to\infty$.
This implies that
${H_i}_*(c_i u)$ converges to a limiting function  $w$ strongly in $C^{0,\alpha}_{d_1}({H_\infty}_*g_\infty)$-norm, which satisfies
\begin{align}
1-\epsilon < \|w\|_{C^{0,\alpha}_{d_1}({H_\infty}_*g_\infty)}<1+\epsilon
\end{align}
for some small $\epsilon>0$.
By the convergence of the metric and the complex structure, we also have
$w$ is $\Delta_{{H_\infty}_*g_\infty}$-harmonic and ${H_\infty}_*J_\infty$-holomorphic on $A_{R,\infty}(0)$.
Since $X_\infty$ is a one-convex space, $w$ can be extended to a holomorphic function on $X_\infty$.
Recall that $u$ is a non-constant holomorphic function of finite degree on $X$, and the zero locus of 
${H_i}_*(c_i u)$ is a ${H_i}_* J$-analytic subset which intersects with any annulus $A_{r,2r}(0)$ non-trivially for $r$ large enough.
This implies that $\inf_{A_{r,2r}(0)}|w| = 0$. Since $\|w\|_{C^{0,\alpha}_{d_1}({H_\infty}_*g_\infty)}>1-\epsilon$,
we have $w$ is a non-constant ${H_\infty}_*J_\infty$-holomorphic 
function on $A_{R,\infty}(0)$. 
Since $\|w\|_{C^{0}_{d_1}({H_\infty}_*g_\infty)}$ is bounded, and $d_1$ is the lowest possible $d_{H_\infty}$-degree for a non-constant holomorphic function,
we have $d_{H_\infty}(w) = d_1$ and $\sum_{j=1}^m |a_{j}|>0$.

Next, we want to show that there exists some positive constant $C>0$, such that $\frac{1}{C}< |c_i| < C$ for $i$ sufficiently large.
By the convergence of $\xi_i, \sum_{j=1}^m a_{i,j}h_j,v_i$ as above, 
the $d_1$-degree term of ${H_i}_*(c_i u)-w-\xi_i$ can be represented as 
$\sum_{j=1}^m b_{i,j}h_{j}$, where $b_{i,j}\to 0$ as $i\to\infty$.
Then for $i$ sufficiently large, the $d_1$-degree term of ${H_i}_*(c_i u)$ ``approximately'' equals to the $d_1$-degree term of $w$.
Define the ``growth ratio'' for any ${H_\infty}_*J_\infty$-holomorphic function $h$ on $A_{R,\infty}(0)$ with $d_{H_\infty}(h)=d$ by
\begin{align}
I_{H_\infty}(h) = \lim_{r\to\infty}\Big( \sup_{p\in S_r}\frac{|h(p)|}{r^{d}}\Big).
\end{align}
It is not hard to see that $I_{H_\infty}(w)$ is well-defined and $0 <I_{H_\infty}(w) <\infty$ unless $w$ is trivial.
Similarly, we can define $I_{H_i}$ and $I_{\Psi}$ for ${H_i}_*J$-holomorphic functions and ${\Psi}_*J$-holomorphic functions with respect to the corresponding
coordinates.
By the approximation above, $I_{H_\infty}(w) \approx |c_i|\cdot I_{H_\infty}({H_i}_* u)$.
Since $g_i$ is an ALE K\"ahler metric over both the $\Psi$ and $H_i$ coordinates, by \cite[Corollary 3.2]{Bartnik},
\begin{align}
\overline{H_i} = A_i \cdot \overline{\Psi} + \text{ lower order term}
\end{align}
where $\overline{H_i}, \overline{\Psi}$ are the universal covers of the coordinates, and $A_i\in U(2)$. 
It follows that $I_{\Psi}(\Psi_* u) = I_{H_i}({H_i}_*u)$,
and since the harmonic coordinate $H_i$ converges to $H_\infty$, we also have
$I_{H_i}({H_i}_*u) = I_{H_\infty}({H_i}_* u)$.
Then we have $I_{H_\infty}(w) \approx |c_i| I_{\Psi}(\Psi_* u)$.
Since $0< I_{\Psi}(\Psi_* u)<\infty$, there exists a constant $C>0$, such that for $i$ sufficiently large,
$\frac{1}{C}< |c_i| < C$.

As a result, without loss of generality, we can assume $c_i=1$, and up to a subsequence, ${H_i}_* u$ converges to a ${H_\infty}_*J_\infty$-holomorphic function $w$
strongly in $C^{0,\alpha}_{d_1}(A_{R,\infty}(0),{H_\infty}_*g_\infty)$-norm,
and $d_{H_\infty}(w) = d_{\Psi}(u) = d_1$.
Then for generators $u^1,\ldots,u^{n_1}$ of holomorphic functions with $d_{\Psi}$-degree $d_1$, up to a subsequence, the functions 
 ${H_{i}}_* u^1,\ldots, {H_{i}}_* u^{n_1}$
converge to 
${H_\infty}_*J_\infty$-holomorphic functions $w^1,\ldots,w^{n_1}$ 
of $d_{H_\infty}$-degree $d_1$.We claim that $w^1,\ldots,w^{n_1}$ 
are $\CC$-linear independent,
and are therefore generators of ${H_\infty}_*J_\infty$-holomorphic functions of degree $d_1$.
To see this, if there was any linear relation $\sum_{j=1}^m c_j w^j = 0$, 
then for $i$ sufficiently large, 
\begin{align}
\sum_{j=1}^m c_j {H_{i}}_* u^j = {H_i}_* \Big( \sum_{j=1}^m c_j u^j \Big)
\end{align}
would be very small pointwise for all $r$ sufficiently large, 
which is a contradiction to the linear independence of $u^1, \ldots, u^{n_1}$.

Next, let $u$ be a holomorphic function on $X$ with $d_{\Psi}(u) = d_2$.
Without loss of generality, we can assume $u\not\in \CC[u^1,\ldots,u^{n_1}]$.
There is a sequence of constants $c_i>0$ such that on $A_{R,\infty}(0)$, $\|{H_i}_*(c_i u)\|_{C^{0,\alpha}_{d_2}({H_i}_*g_i)} = 1$.
A similar argument to the $d_1$-degree case shows that 
${H_i}_*(c_i u)$ converges to a limit function $w$ strongly in $C^{0,\alpha}_{d_2}({H_\infty}_*g_\infty)$-norm.
Then 
\begin{align}
\label{1mew}
1-\epsilon < \|w\|_{C^{0,\alpha}_{d_2}({H_\infty}_*g_\infty)} < 1+\epsilon
\end{align} 
for some small $\epsilon>0$, which clearly implies that $d_{H_\infty}(w) \leq d_2$.
We claim that $d_{H_\infty}(w) = d_2$. To see this, assume by contradiction 
that $d_{H_\infty}(w) < d_2$.
Since any holomorphic function of $d_{H_\infty}$-degree smaller than $d_2$ is generated by holomorphic functions of $d_{H_\infty}$-degree $d_1$,
there exists a polynomial $F$, such that $w = F(w^1,\ldots, w^{n_1})$, where $w^1,\ldots, w^{n_1}$ are holomorphic functions 
of degree $d_1$ and each $w^j$ is the limit of the sequence ${H_i}_* u^j$ as proved above.
Then we have
\begin{align}
\|c_i\cdot {H_i}_*u - F({H_i}_*u^1,\ldots,{H_i}_*u^{n_1})\|_{C^{0,\alpha}_{d_2}({H_\infty}_*g_\infty)}\to 0
\end{align}
as $i\to \infty$ on $A_{R,\infty}(0)$.
Let $V(u)$ be the zero locus of $u$ on $X$, which is an analytic closed subset and not contained in any compact subset. 
Since $u\not\in \CC[u^1,\ldots,u^{n_1}]$, 
for some small $\epsilon'>0$, the set 
$S = \{x\in V(u): |F(u^1,\ldots,u^{n_1})|>\epsilon'\}$ is non-trivial and not contained in any compact subset.
For a fixed annulus $A_{r,2r}(0)\subset A_{R,\infty}(0)$,  there exists a 
sequence of points $p_i\in H_i(S)\cap A_{r,2r}(0)$, and $p_i\to p_\infty\in A_{r,2r}(0)$.
Then $|{H_i}_*(c_i u-F(u^1,\ldots,u^{n_1}))|(p)>\frac{\epsilon'}{2}$, which contradicts with
$\|{H_i}_*(c_i u - F(u^1,\ldots,u^{n_1}))\|_{C^{0,\alpha}_{d_2}({H_\infty}_*g_\infty)} = 0$ on $A_{R,\infty}(0)$. This contradiction proves that $d_{H_\infty}(w) = d_2$.

Similarly to the degree $d_1$ case above, by analyzing the $d_2$-degree term of $c_i\cdot {H_i}_* u$ and $w$, it follows that there exists a constant $C>0$
such that for $i$ sufficiently large, $\frac{1}{C}< |c_i|< C$.
Without loss of generality we can assume that $c_i=1$, and up to subsequence, ${H_i}_* u$ converges to a holomorphic function $w$ of degree $d_2$.
Then for $u^{n_1+1},\ldots,u^{n_1+n_2}$, which are generators of holomorphic functions of degree $d_2$ on $X$, up to a subsequence, 
${H_i}_* u^{n_1+1},\ldots,{H_i}_* u^{n_1+n_2}$ converge to holomorphic
functions $w^{n_1+1},\ldots, w^{n_1+n_2}$, which are generators of ${H_\infty}_*J_\infty$-holomorphic functions of degree $d_2$ on $A_{R,\infty}(0)$.

By an inductive procedure, the above arguments prove that, 
up to a subsequence, the functions  ${H_i}_*u^1,\ldots,{H_i}_*u^N$ converge to ${H_\infty}_*J_\infty$-holomorphic functions $w^1,\ldots,w^N$ of the corresponding
degrees. Note that for any polynomial relation $F(u^1,\ldots,u^N)=0$, by the convergence of $u^j$, $F(w^1,\ldots,w^N)=0$.
Each $w^j$ can be pulled-back to $X_\infty\setminus B_R(x_\infty)$ and extends to a holomorphic function on the one-convex space $X_\infty$, which
is still denoted as $w^j$.

Define
\begin{align}
R(X) &= \CC[u^1,\ldots,u^N] \simeq \CC[x^1,\ldots,x^N]/ \mathcal{I} \\ 
R(X_\infty) &= \CC[w^1,\ldots,w^N] \simeq \CC[x^1,\ldots,x^N]/\mathcal{I}_\infty,
\end{align}
where $\CC[x^1,\ldots,x^N]$ is the coordinate ring of $\CC^N$. 
By the paragraph above, $\mathcal{I}\subset \mathcal{I}_\infty$,
so there exists a well-defined ring homomorphism from $R(X)$ to $R(X_\infty)$ by mapping each $u^j$ to $w^j$. 
We claim that this ring homomorphism is an isomorphism. To see this, 
assume that $w^1,\ldots,w^N$ satisfy a polynomial relation $F(w^1,\ldots,w^N) = 0$.
Consider the function $F = F(u^1,\ldots,u^N)$, which is 
a holomorphic function on $X$. If $F$ is not identically zero, then 
let $d_{\Psi}(F) = d_F\geq 0$. 
By the strong convergence of $u^j$ proved above, ${H_i}_*F$ converges to a ${H_\infty}_*J_\infty$-holomorphic function $G$ on $A_{R,\infty}(0)$
in $C^{0,\alpha}_{d_F}({H_\infty}_*g_\infty)$-norm.
Since $I_{H_i}({H_i}_*F) = c>0$ is a positive constant, by the $C^{0,\alpha}_{d_F}$-convergence, we have 
$\|G\|_{C^{0,\alpha}_{d_F}({H_\infty}_*g_\infty)}>0$.
However, by the convergence of $u^j$, 
$G = F(w^1,\ldots,w^N) = 0$, which is a contradiction. 
Therefore $u^1,\ldots,u^N$ satisfies the same polynomial 
relation $F$ and $R(X)$ is isomorphic to $R(X_\infty)$.
Since the affine space $Spec(R(X))$ is isomorphic to the image of $X$ in $\CC^N$ under
$u \equiv (u^1, \ldots, u^N)$, the ring isomorphism implies that
$w \equiv (w^1, \ldots, w^N)$ 
embeds $X_\infty\setminus B_R(x_\infty)$ into $\CC^N$ and consequently 
$X_\infty$ is birationally equivalent with $X$.

For the third part of Theorem \ref{uniform_rate}, 
if there exists a holomorphic curve $E$ that is not contained in the geodesic ball $B_R(x_i)$,
then on $E\cap (X\setminus B_R(x_i))$, the holomorphic functions $u^j$ are constant for $1\leq j \leq N$.
However this contradicts with the fact proved above that $u = (u^1,\ldots,u^N)$ embeds $X\setminus B_R(x_i)$ into $\CC^N$.
Thus all holomorphic curves are contained in the geodesic ball $B_R(x_i)$ for each $i$.

To finish the proof of Proposition \ref{uniform_rate}, we need to prove that the assumption (3) in Lemma~\ref{uniform_diffeom} is necessarily satisfied. To prove this, we argue by contradiction. 
Let $r_i$ be the radius such that $\|Rm\|_{L^2(X\setminus B_{r_i}(x_i))}= \frac{\epsilon_0}{2}$.
If  assumption (3) in Lemma~\ref{uniform_diffeom} is not true, then 
$r_i\to\infty$ as $i\to\infty$. 

Consider the rescaled sequence $(X,\frac{1}{r^2_i}g_i, x_i)$. The rescaling preserves the Sobolev constant
and the $L^2$-norm of $Rm$. 
Then by Lemma \ref{uniform_diffeom}, up to a subsequence,
$(X,\frac{1}{r_i^2}g_i,x_i)$ converges to an ALE space $(X'_\infty,g'_\infty,x'_\infty)$
in the sense of pointed Cheeger-Gromov convergence with a uniform ALE asymptotic rate.
In the following, we will first show that the limit space $X'_\infty$ is isomorphic to $\CC^2/\Gamma$ and $g'_\infty$ is a flat metric. Then we will show that 
\begin{align}
\label{L2_continuous}
\|Rm(g'_\infty)\|_{L^2(X'_\infty\setminus B_1(x'_\infty, g_{\infty}'))} = 
\lim_{i \to \infty} \|Rm(\frac{1}{r_i^2}g_i)\|_{L^2(X \setminus B_1(x_i, \frac{1}{r_i^2}g_i))} =  
\frac{\epsilon_0}{2},
\end{align}
which would imply a contraction to flat limit metric.

In order to show that $X'_\infty$ is isomorphic to $\CC^2/\Gamma$,
without loss of generality, we can assume that $x'_\infty$ is the only energy concentration point, since the case of several concentration points is handled by a similar argument.  
Then for each $\delta>0$, there exists a diffeomorphism 
\begin{align}
\psi'_i: X'_\infty\setminus B_{\delta}(x'_\infty)\rightarrow X
\end{align}
such that ${\psi'_i}^*(\frac{1}{r_i^2}g_i)$ converges to $g'_\infty$ smoothly in $X'_\infty\setminus B_\delta(x'_\infty)$.
We also have ${\psi'_i}^*J$ converges to $J'_\infty$ smoothly in $X'_\infty\setminus B_\delta(x'_\infty)$.
Moreover, there exist harmonic coordinates $H'_i$ for $\frac{1}{r_i^2}g_i$, $H'_\infty$ for $g'_\infty$,
and on a fixed annulus $A_{R,\infty}(0)\subset \RR^4/\Gamma$, ${H'_i}_*\frac{1}{r_i^2}g_i$ converges to ${H'_\infty}_*g'_\infty$. 
Consider the rescaled holomorphic functions $r_i^{k_1} u^1,\ldots, r_i^{k_N} u^N$, 
where $k_j = d_{H'_i}(u^j) \leq k_{j+1} = d_{H'_i}(u^{j+1})$.
Note that for the same reason as stated before, $X'_\infty$ has the same spectrum of degrees of holomorphic functions and
each $k_j\in \{d_1,\ldots,d_l\}$.
It is not hard to see that for holomorphic function $u^j$, $I_{H'_i}({H'_i}_*(r_i^{k_j}u^j))$ is a positive constant.
Then following the same argument as used before, we start with the lowest degree $k_1 = d_1$ and
we can show that ${H'_i}_*(c_i r_i^{k_1}u^1)$ converges strongly to a non-zero holomorphic function on $A_{R,\infty}(0)$ 
in $C^{0,\alpha}_{k_1}({H'_\infty}_*g'_\infty)$-norm. 
Then since $I_{H'_i}({H'_i}_*({r_i}^{k_1}u^1))$ is a positive constant and 
$I_{H'_i}({H'_i}_*(c_i r_i^{k_1}u^1))$ converges to a positive limit, there exists a $C>0$ such that $0<\frac{1}{C}< c_i <C $ and we can assume that $c_i=1$.
Then ${H'_i}_*(r_i^{k_1}u^1)$ converges to a holomorphic function ${w'}^1$ of degree $k_1$ on $A_{R,\infty}(0)$, which extends to a holomorphic
function on $X'_\infty$ and will be still denoted by ${w'}^1$.
By a similar iterative argument, we can show that for each holomorphic function $u^j$ of degree $k_j$,
${H'_i}_*(r_i^{k_j}u^j)$ converges to a holomorphic function ${w'}^j$ of $d_{H'_\infty}$-degree $k_j$
in $C^{0,\alpha}_{k_j}({H'_\infty}_*g'_\infty)$-norm.
Let $F(u^1,\ldots,u^N)=0$ be a polynomial relation satisfied by $u^1,\ldots,u^N$.
Denote $F = F'+F''$, where $F'$ is the homogeneous highest-degree term of $F$, and $F''$ is the lower-degree term of $F$.
Then there exist integers $p>p'>0$, such that 
\begin{align}
\begin{split}
0= r_i^{p}F({H'_i}_*u^1,\ldots,{H'_i}_*u^N) &= F'({H'_i}_*(r_i^{k_1}u^1),\ldots, {H'_i}_*(r_i^{k_N}u^N)) \\
&+ r_i^{p'}F''({H'_i}_*(r_i^{k_1}u^1),\ldots,
{H'_i}_*(r_i^{k_N}u^N)).
\end{split}
\end{align}
Letting $i\to\infty$, since $r_i\to 0$, this implies that $F'({w'}^1,\ldots,{w'}^N)=0$.
Next, let $F_1,\ldots,F_m$ be generators of polynomial relations satisfied by $u^1,\ldots,u^N$, and $F'_1,\ldots,F'_m$ be the corresponding
leading terms which are satisfied by $w'_1,\ldots,w'_N$.
Assume $w' \equiv ({w'}^1,\ldots,{w'}^N)$ is not an embedding on $X'_\infty\setminus B_R(x'_\infty)$, where all holomorphic curves contained in
$B_R(x'_\infty)$ for $R$ large enough. 
Then there exists a polynomial relation $P({w'}^1,\ldots,{w'}^N) = 0$ 
but $P({w'}^1,\ldots,{w'}^N)$
is not generated by $\{F'_j({w'}^1,\ldots,{w'}^N)\}_{1\leq j\leq m}$.
Here $P(a_1,\ldots,a_N)$ is a polynomial of degree $q$, where each parameter $a_j$ is a variable of degree $k_j$.
Then by the definition of $F'_j$, $P$ is not the leading term of any polynomial satisfied by $u^1,\ldots,u^N$.
As a result, $P(\Psi_*u^1,\ldots,\Psi_*u^n)$ has non-trivial $d_{\Psi}$-degree $q$ term. 
If not, we have $P'({w'}^1,\ldots,{w'}^N)=0$, and by induction on the lower degree polynomial $P-P'$, it implies that
$P({w'}^1,\ldots,{w'}^N)$ is generated by  $\{F'_j({w'}^1,\ldots,{w'}^N)\}_{1\leq j\leq m}$, which implies a contradiction.
Then we have
\begin{align}
\begin{split}
\inf_{r>R}\sup_{p\in S_r(0)}|&r^{-q}P({H'_i}_*(r_i^{k_1}u^1),\ldots,{H'_i}_*(r_i^{k_N}u^N))| \\
&= \inf_{r>R}\sup_{p\in S_r(0)}|r^{-q}P({\Psi}_*u^1,\ldots,{\Psi}_*u^N)| 
= C>0.
\end{split}
\end{align}
The convergence of ${H'_i}_*(r_i^{k_j} u^j)$ implies the convergence of $P({H'_i}_*(r_i^{k_1}u^1),\ldots,{H'_i}_*(r_i^{k_N}u^N))$
in $C^{0,\alpha}_{q}({H'_\infty}_*g'_\infty)$-norm, which implies that $|P({w'}^1,\ldots,{w'}^N)|>0$ and this gives a contradiction.
Thus $w'$ embeds $X'_\infty\setminus B_R(x'_\infty)$ into $\CC^N$. Since
${w'}^1,\ldots {w'}^N$ satisfy the polynomial relations $F'_1,\ldots,F'_m$, 
$X'_\infty$ is birationally equivalent to $\CC^2/\Gamma$.

For the K\"ahler classes $\kappa_i$ in the statement of Theorem \ref{compactness},
there exists a sequence of smooth ALE K\"ahler background metrics $\omega_{b,i}$, 
where each $\omega_{b,i}\in \kappa_i$, and
$\omega_{b,i}$ converges to a K\"ahler metric $\omega_{b,\infty}\in \kappa_\infty$ smoothly with a uniform ALE asymptotic rate.
Let $W_1,\ldots, W_k$ be smooth $2$-cycles in $X$, and let 
$[W_1],\ldots,[W_k]$ be a basis of $H_2(X,\ZZ)$.
The K\"ahler class of $\omega_i$ can also be parameterized by 
$\int_{W_j}\omega_i$ $(1\leq j\leq k)$.
For the rescaled sequence, as $i\to\infty$,
\begin{align}
\int_{W_j} \frac{1}{r_i^2}\omega_i = \int_{W_j} \frac{1}{r_i^2}\omega_{b,i} \to 0
\end{align}
for each $1\leq j\leq k$.

If $X'_\infty$ is not isomorphic to $\CC^2/\Gamma$, then there exists an effective Weil divisor
$D$ in $X'_\infty$, which may pass through the energy concentration point $x'_\infty$.
Since $D$ is holomorphic, the restriction of $\omega'_\infty$ on $D$ is definite positive, and 
$\int_{D\setminus B_\delta(x'_\infty)}\omega'_\infty>0$.
Let $f:\widetilde{X}_{\infty}'  \rightarrow {X'_\infty}$ be the minimal resolution,
$E'_j$ ($1\leq j\leq r'$) as the exceptional divisors over $x'_\infty$,
and denote $\widetilde{D}$ as the proper transform of $D$. 
Our immediate goal is to find a homology class $[\sigma] \in H_2(\widetilde{X}_{\infty}',\ZZ)$ which is
a nontrivial class in the image of the inclusion map
\begin{align}
\label{istar}
\iota_* :  H_2( \widetilde{X}_{\infty}' \setminus N_{\epsilon}(E') )   \rightarrow  H_2(\widetilde{X}_{\infty}',\ZZ),
\end{align}
where $E' = \cup_{j = 1}^{r'} E'_j$, and $N_{\epsilon}(E)$ denotes a tubular neighborhood of $E$ (with respect to any reference metric), 
which can be identified with a disc bundle in the normal bundle of $E'$, and $\epsilon > 0$ is small.  For simplicity, we can assume that $E'$ is connected and intersects $\tilde{D}$ in a single point, because the following argument will also work in the most general case with minor modifications.  We can assume $D$ is irreducible, so that $\tilde{D}$ is a single rational curve (since we only need to find a single homology class which works). Define the open sets $U = N_{2\epsilon}(E')$, $V =   \widetilde{X}'_{\infty} \setminus N_{\epsilon}(E')$. 
Then $U \cap V$ deformation retracts to $S^3/ \Gamma$ where $\Gamma$ is a finite subgroup of ${\rm{U}}(2)$ 
acting freely on $S^3$. Note that $H_1(S^3/\Gamma) = \Gamma/[\Gamma, \Gamma]$ is a finite abelian group. 
By the universal coefficient theorem, $H^1(S^3/ \Gamma) = Hom( H_1(S^3/\Gamma), \ZZ) = 0$. 
By Poincar\'e duality, $H_2(S^3/\Gamma) = H^1(S^3/\Gamma) = 0$.  Part of the Mayer-Vietoris sequence in singular homology with $\ZZ$-coefficients is then
\begin{equation}
\begin{tikzcd}
\label{cd1}
 0   \arrow[r] &  
\ZZ^j  \oplus H_2(V) \arrow[r, "\beta"] &  H_2(\widetilde{X}'_{\infty})  \arrow[r, "\partial"] & 
H_1(U \cap V) \cong \Gamma/[\Gamma,\Gamma],
\end{tikzcd}
\end{equation}
since $H_2(U) = \ZZ^j$,  $H_2(U \cap V) = H_2( S^3/\Gamma) = 0$, and where $\beta$ is the sum mapping. 
The divisor class $[\tilde{D}]$ is a generator in $ H_2(\widetilde{X}_{\infty}')$. From \eqref{cd1}, 
the class $[m \tilde{D}] = \beta( c_1, c_2)$, where $c_1 \in H_2(U)$, and $c_2 \in H_2(V)$, 
where $m = \big|  \Gamma/ [\Gamma, \Gamma] \big|$. 
We know that the classes $[E_j'] \in H_2(U)$ map to generators in $H_2( \widetilde{X}_{\infty}')$, 
under inclusion, so we have 
\begin{align}
[m \tilde{D}] = \sum_{j} b_j [E'_j] + \beta(0,c_2),
\end{align}
where $b_j \in \ZZ$. Rearranging, we have
\begin{align}
\beta(0,c_2) = [m \tilde{D}]  -  \sum_{j} b_j [E'_j].
\end{align}
The right hand side is therefore the nontrivial homology class we were seeking which is in the image of $\iota_*$. 

The upshot of this discussion is that we can find can find a representative $\sigma$ of the homology class of 
$[m \tilde{D}]-  \sum_{j} b_j [E'_j] $ whose image avoids a tubular neighborhood all the divisors which get blown down. Such a representative is a finite linear combination of $2$-simplices, $\sigma =  \sum a_j \sigma_j,$ where
\begin{align}
\sigma_j : \Delta^2  \rightarrow \widetilde{X}_{\infty}' \setminus N_{\epsilon}(E'),  
\end{align}
with $b_i \in \ZZ$ and, where $\Delta^2$ is a standard $2$-simplex. Note that we can assume that $\sigma_j$ is a smooth mapping since singular homology with continuous chains is isomorphic to singular homology with smooth chains on any smooth manifold.

By the gluing method used in the proof of \ref{tblemma}, there exists a K\"ahler form $\tilde{\omega}$
on $\widetilde{X'_\infty}$, such that
the restriction of $\tilde{\omega}$ on $\widetilde{X}'_\infty\setminus N_{\delta}(E')$ equals to 
$f^*\omega'_{\infty}$, and with respect to which the divisors $E_j'$ have arbitrarily small area.
Note that we can choose $\epsilon$ so that $f(N_{\epsilon}(E'))$ is contained in $B_{\delta}(x_{\infty}')$. 
Then we have
\begin{align}
\int_{\sigma}\tilde{\omega} = \int_{ [m \tilde{D}]-  \sum_{j} b_j [E'_j]}\tilde{\omega} \geq \frac{m}{2} 
\int_{\tilde{D}} \tilde{\omega} 
> \frac{m}{2} \int_{D\setminus B_\delta(x'_\infty)}\omega'_\infty>0.
\end{align}
The diffeomorphism $\psi'_i$ embeds $X'_\infty\setminus B_\delta(x'_\infty)$ into $X$.
Also, by the Mayer-Vietoris sequence, $H_2(X_{\infty}' \setminus  B_\delta(x'_\infty), \QQ)$ embeds into $H_2(X, \QQ)$. 
Therefore we can view the class $[(\psi'_i)_* f_* \sigma] $ as a class in $H_2(X,\QQ)$, which is independent of $i$ when $i$ is sufficiently large. Then 
\begin{align}
[(\psi'_i)_*f_* \sigma] = \sum_{1\leq j\leq k} q_j [W_j]
\end{align}
where each $q_j\in \QQ$, and $[W_1],\ldots,[W_k]$ is the basis of $H_2(X,\ZZ)$ as defined above.
Then we have
\begin{align}
\label{rescaled_homology_vanish}
\int_{(\psi'_i)_*f_* \sigma} \frac{1}{r_i^2}\omega_i = \sum_{1\leq j\leq k}q_{j} \int_{W_j} \frac{1}{r_i^2}\omega_{b,i} \to 0
\end{align}
However, by the pointed Cheeger-Gromov convergence, 
we have
\begin{align}
\int_{(\psi'_i)_*f_* \sigma} \frac{1}{r_i^2}\omega_i = \int_{f_* \sigma} {\psi'_i}^*(\frac{1}{r_i^2}\omega_i)\xrightarrow{i\to\infty}
\int_{f_* \sigma}\omega'_\infty = \int_{\sigma} f^*\omega'_\infty = \int_{\sigma}\tilde{\omega} >0
\end{align}
which contradicts with (\ref{rescaled_homology_vanish}).
This implies that $X'_\infty$ is isomorphic to $\CC^2/\Gamma$.

The Hirzebruch signature theorem for an ALE SFK orbifold
with group $\Gamma$ at infinity, and a single orbifold point $p$ with 
group $\Gamma'$,  
\begin{align}
\label{taum4}
\tau(Y)= - \frac{1}{12\pi^2}\int_{Y} \Vert W^- \Vert^2 dV_{g}  + \eta(S^3/\Gamma)
- \eta(S^3/\Gamma').
\end{align}
In our case $Y = X_{\infty}' = \CC^2/ \Gamma$, so $\tau(Y) = 0$, and since $\Gamma = \Gamma'$
this implies that $W^-(g_{\infty}') \equiv 0$. The Chern-Gauss-Bonnet theorem in this setting states that
\begin{align}
\label{chim4}
\chi(Y)= \frac{1}{8\pi^2}\int_{Y}\Big( \Vert W^- \Vert^2-\frac{1}{2} |E|^2\Big)dV_{g}  + \frac{1}{|\Gamma|}  + 1 - \frac{1}{|\Gamma'|} 
\end{align}
Again, since $Y = X_{\infty}' = \CC^2/ \Gamma$, we have $\chi(Y) = 1$, and this implies that $E \equiv 0$.  Consequently, $g_{\infty}'$ is a flat metric. 

To finish the proof,  we will next show the convergence \eqref{L2_continuous}.
If there is no smooth energy concentration point in $X'_\infty$, then the sequence of highest curvature points $x'_i$ converges to 
the only singular point, which is the vertex of the cone. As a result, the metrics converge smoothly on $X'_\infty\setminus B_{\frac{1}{2}}(x'_\infty,g'_\infty)$,
and \eqref{L2_continuous} is a direct consequence of this.
\begin{lemma}
\label{no_smooth_energy_concentration}
There exists no smooth energy concentration point in $X'_\infty$.
\end{lemma}
\begin{proof}
Assume on the contrary that there exists a smooth energy concentration point $p\in X'_\infty$. Then there exists a sequence of points $p_i\in X$ that converges to $p$ in the Gromov-Hausdorff topology. 
For $i$ sufficiently large, there exists a $\delta>0$, such that the the geodesic ball $B_\delta(p_i,\frac{1}{r_i^2}g_i)$ is homeomorphic
to the bubble tree that ``bubbles-off'' at $p$. 
Since $p$ is a smooth energy concentration point, by choosing $\delta>0$ small enough, $B_\delta(p,g'_\infty)$ is diffeomorphic to the standard $4$-ball. 
Then when $i$ is sufficiently large,
there exists a smooth function $\rho_i$ which is close to the radius function of the geodesic ball $B_\delta(p_i,\frac{1}{r_i^2}g_i)$,
such that $B(i,\delta):=\{\rho_i<\delta\}\subset B_{2\delta} (p_i,\frac{1}{r_i^2}g_i)$,
the boundary $\partial B(i,\delta)$ is diffeomorphic to the standard $3$-sphere, and $\rho_i^2-\delta^2$ is a strictly plurisubharmonic function near the boundary.
Then $B(i,\delta)$ is a strictly pseudoconvex relative open subset in $X$.
By \cite[Theorem 1]{Narasimhan}, there exists a Remmert reduction that maps $B(i,\delta)$ to a Stein space $B'(i,\delta)$,
which contracts a compact analytic subset to isolated points in $B'(i,\delta)$.
By the Stein factorization theorem \cite{GrauertRemmert}, since $B(i,\delta)$ is a normal complex space, $B'(i,\delta)$ is also a normal
complex space. Then by \cite[Theorem a]{Narasimhan1962}, any local holomorphic function in $B'(i,\delta)$ can be extended to a global
function in $B'(i,\delta)$. As a direct consequence, $B'(i,\delta)$ can be embedded into a Euclidean space. 
Furthermore, the boundary sphere $\partial B(i,\delta)$ together with its CR-structure $I$ induced by the complex structure $J$ 
can be embedded into $B'(i,\delta)$. 
Then $(\partial B(i,\delta),I)$ is a CR-embeddable $3$-sphere and $I$ is a small perturbation of the standard CR-structure on $3$-sphere. 
Then by \cite[Section 5]{Lempert94}, the Stein space enclosed by $\partial B(i,\delta)$ is smooth and is diffeomorphic to standard ball in $\CC^2$. As a result, 
$B(i,\delta)$ is obtained by iterative blowups of a $4$-ball.
Since $p$ is a smooth energy concentration point, by Lemma \ref{tblemma}, the second Betti number of the first bubble must be positive.
Then the topology of $B(i,\delta)$ is nontrivial, and there exists at least one $(-1)$-curve
in $B(i,\delta)$. However, this contradicts with the assumption that $X$ is minimal.
\end{proof}

It follows from the above that \eqref{L2_continuous} holds,
which is a contradiction since $g_{\infty}'$ is a flat metric. 
This contradiction finishes the proof of Proposition \ref{uniform_rate}.

\end{proof}

\section{Compactness II. The limit is birationally dominated by $X$}
\label{remove}
Recall that, $(X,J)$ is a minimal complex surface, and 
$g_0$ is a fixed background K\"ahler ALE metric, with K\"ahler form $\omega_0$. 
Without loss of generality we can assume that there is a fixed ALE coordinate system for $g_0$,
\begin{align}
\Psi: X \setminus K\rightarrow (\RR^4\setminus \overline{B})/\Gamma, 
\end{align}
with $g_0$ ALE of order $-2 < -\mu < -1$ and 
$J - J_0 \in C^{\infty}_{-\mu}$.

As a result of Proposition \ref{uniform_rate}, we have
\begin{align}
\label{CGC}
(X,g_i,J,x_i)\xrightarrow{\mathrm{pointed\; Cheeger-Gromov}} (X_\infty,g_\infty,J_\infty,x_\infty).
\end{align}
with uniform ALE asymptotic rate $-2<-\mu<-1$, i.e.,
the sequence convergence in the pointed Gromov-Hausdorff pseudo-distance, 
and for any $\delta>0$, there exists a diffeomorphism 
$\psi_i: X_\infty\setminus B_{\delta}(x_\infty)\rightarrow X_i$, such that 
${\psi_i}^* g_i  \xrightarrow{C^\infty_{-\mu}} g_\infty $, 
${\psi_i}^* J \xrightarrow{C^{\infty}_{-\mu}} J_\infty$, 
and $(X_\infty,J_\infty)$ is birationally equivalent to $(X,J)$. Furthermore, 
as can be seen in the proof of Proposition \ref{uniform_rate},
$\Psi$ is common ALE coordinate
\begin{align}
\label{uniform_ALE_coordinate}
\Psi: X\setminus K\rightarrow (\RR^4\setminus B_R)/\Gamma
\end{align}
where $K$ is a compact subset of $X$, and $B_R$ is a Euclidean ball of radius $R$ centered at $0$,
such that for any $i\geq 1$,
$x_i\in K$, and there exists some constant $C(k)>0$ independent of $i$ such that
$\|{\Psi}_*g_i-g_{Euc}\|_{C^{k,\alpha}_{-\mu}(g_{Euc})} < C(k)$,
$\|{\Psi}_*J-J_{Euc}\|_{C^{k,\alpha}_{-\mu}(g_{Euc})}<C(k)$.

\begin{remark}
Without loss of generality, we may assume for the rest this section that
there is only one energy concentration point $x_\infty\in X_\infty$. 
It is a straightforward generalization to the case of 
multiple energy concentration points. 
\end{remark}
Before giving the proof, we will first demonstrate the no singularity result in the case when $X$ is Stein
by a simple topological argument.
\begin{proposition}If $(X,J)$ is moreover assumed to be Stein then Theorem~\ref{compactness} is true. 
\end{proposition}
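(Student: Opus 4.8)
The plan is to exploit the fact that a Stein minimal ALE surface is, up to the compact core, just $\CC^2/\Gamma$, and show that no bubbling can occur at all. Suppose, for contradiction, that the Cheeger--Gromov limit $X_\infty$ from Proposition~\ref{uniform_rate} has an energy concentration point $x_\infty$. By Proposition~\ref{uniform_rate}, $X_\infty$ is birationally equivalent to $X$ and has group $\Gamma$ at infinity; since $X$ is Stein and smooth, it contains no compact curves, and being minimal (in fact Stein) its Remmert reduction is itself, so $X$ is biholomorphic to a resolution of $\CC^2/\Gamma$ with no exceptional set, i.e.\ $X$ is biholomorphic to $\CC^2/\Gamma$ with its smoothing trivialized --- more precisely, $\KK(X,J,\omega_0,\delta_0) = H^2(X,\RR)$ and, because $X$ is Stein and smooth, $H^2(X,\RR) = 0$, so $b_2(X) = 0$. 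Then the Mayer--Vietoris formula \eqref{b2sum} from the bubble-tree discussion gives
\begin{align}
0 = b_2(X) = b_2(X_\infty) + \sum_{j} b_2(Y_j),
\end{align}
where the $Y_j$ range over all bubbles in the bubble tree over $x_\infty$ (and over any other energy concentration points). Since each summand is a nonnegative integer, we conclude $b_2(X_\infty) = 0$ and $b_2(Y_j) = 0$ for every bubble $Y_j$.

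Next I would rule out every bubble. Each bubble $Y_j$ is an ALE (or AE) SFK orbifold, and by the Hirzebruch signature and Chern--Gauss--Bonnet theorems in the ALE SFK orbifold setting (equations \eqref{taum4} and \eqref{chim4}, applied with the appropriate groups at infinity and at the orbifold points), the condition $b_2(Y_j) = 0$ forces, just as in the proof of Lemma~\ref{tblemma} and in the ``$X'_\infty$ is flat'' argument inside Proposition~\ref{uniform_rate}, that $W^-(g_{Y_j}) \equiv 0$ and $E(g_{Y_j}) \equiv 0$, hence $g_{Y_j}$ is flat, and then $Y_j$ is a flat cone $\CC^4/\Gamma_j$ --- which carries no curvature concentration, contradicting the defining property of a bubble (there must be a sequence of points where $\|Rm\| \to \infty$ after rescaling, so the bubble cannot be flat). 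The same argument applied to $X_\infty$ shows $g_\infty$ is flat and $X_\infty = \CC^2/\Gamma$, so there was in fact no energy concentration to begin with; this is the contradiction.

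With no energy concentration points, Lemma~\ref{uniform_diffeom} (or directly Proposition~\ref{uniform_rate}) gives that $(X, J, g_i, x_i)$ converges smoothly, with uniform ALE asymptotic rate $O(r^{-\mu})$, to an ALE SFK metric $g_\infty$ on $X$ itself (no jump of complex structure, since the limiting birational model is $X$ and $X$ is its own minimal model). It then remains to upgrade this to the $C^{k,\alpha}_{\delta_0}$-convergence of the K\"ahler \emph{forms} asserted in Theorem~\ref{compactness}: using $[\omega_i] = \kappa_i \to \kappa_\infty$ and the background forms $\omega_{b,i} \to \omega_{b,\infty}$, the $\partial\bar\partial$-lemma of Lemma~\ref{d-dbar} writes $\omega_i = \omega_{b,i} + \sqrt{-1}\partial\bar\partial\phi_i$ with $\phi_i \in C^\infty_{\delta_0 + 2}$, and the scalar-flat equation plus the smooth convergence of $g_i$ gives uniform weighted estimates on $\phi_i$, hence a convergent subsequence $\phi_j \to \phi_\infty$ and $\omega_j \to \omega_\infty := \omega_{b,\infty} + \sqrt{-1}\partial\bar\partial\phi_\infty$ in $C^{k,\alpha}_{\delta_0}$, with $[\omega_\infty] = \kappa_\infty$ and $g_\infty$ scalar-flat K\"ahler. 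The main obstacle is the step of showing no bubble survives: one has to be careful that a bubble could in principle be an AE piece bubbling at a smooth point rather than an orbifold point, but Lemma~\ref{tblemma} handles exactly that case, and the combination of \eqref{b2sum} with the curvature integral identities closes the argument in all cases. In the Stein setting this is genuinely easy --- the vanishing $b_2(X) = 0$ does all the work --- which is why it is presented first as a warm-up for the general minimal case.
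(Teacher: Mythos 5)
There is a genuine gap: your key claim that a smooth Stein ALE K\"ahler surface has $H^2(X,\RR)=0$, hence $b_2(X)=0$, is false. A Stein surface has the homotopy type of a CW-complex of real dimension at most $2$, so $H^2$ can be (and typically is) nonzero; concretely, the smoothing of $\CC^2/\Gamma$ for $\Gamma\subset {\rm{SU}}(2)$ of type $A_{k}$ is a Stein ALE surface diffeomorphic to $\oo_{\CP^1}(-2)\#\cdots$ with $b_2=k\geq 1$. The paper's own remark after Definition~\ref{kcdef} --- that $\KK(X,J,\omega_0,\delta_0)$ equals all of $H^2(X,\RR)$ in the Stein case --- would be vacuous under your assumption; the actual point of that remark is that when there are no holomorphic curves there is no positivity constraint, not that $H^2$ vanishes. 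With $b_2(X)=0$ replaced by the correct generality, your invocation of \eqref{b2sum} no longer yields $b_2(X_\infty)=0$ or $b_2(Y_j)=0$ directly, so the rest of your argument collapses.

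The paper's proof uses a different, correct $b_2$ comparison that does not assume $b_2(X)=0$. It considers the minimal resolution $\widetilde{X_\infty}$ of the orbifold limit $X_\infty$, blows down $(-1)$-curves to obtain a minimal Stein surface $Z$ birational to $X$, and then $Z\simeq X$ by Proposition~\ref{minimal_ALE_moduli}. This gives the chain
\begin{align*}
b_2(\widetilde{X_\infty}) \geq b_2(X_\infty) \geq b_2(Z) = b_2(X),
\end{align*}
while the bubble-tree formula \eqref{b2sum} gives the reverse inequality $b_2(X)\geq b_2(X_\infty)$. Equality forces $X_\infty \simeq Z \simeq X$, i.e.\ $X_\infty$ is already smooth, and then any remaining energy concentration points would produce an AE bubble with $b_2=0$, ruled out by Lemma~\ref{tblemma}. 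Your closing paragraph (upgrading smooth Cheeger--Gromov convergence to $C^{k,\alpha}_{\delta_0}$ convergence of the K\"ahler forms via Lemma~\ref{d-dbar}) is fine in spirit, but the bubbling-exclusion step that precedes it needs the inequality argument above rather than an appeal to a vanishing that does not hold.
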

\begin{proof}
By Proposition \ref{uniform_rate}, $X_\infty$ is birationally equivalent to $X$.
Let $\widetilde{X_\infty}$ be the minimal resolution of $X_\infty$. Blowdown all $(-1)$-curves 
in $\widetilde{X_\infty}$ to obtain
a Stein surface $Z$. By Proposition \ref{minimal_ALE_moduli}, $Z$ is biholomorphic to $X$. 
Clearly, we have $b_2(\widetilde{X_\infty})\geq b_2(X_\infty)\geq b_2(Z) = b_2(X)$, 
with  equality if and only if $X_\infty \simeq Z$.
From \eqref{b2sum}, $b_2(X)\geq b_2(X_\infty)$.
Then $b_2(X) = b_2(X_\infty) = b_2(Z)$.
This implies that $X_\infty$ is isomorphic to $Z$, and thus $X_\infty$ is smooth.
If $x_{\infty}$ is an energy concentration point, then the first bubble $Y_1$ there is 
an AE SFK orbifold. But by the above inequalities and \eqref{b2sum}, we would 
have $b_2(Y_1) = 0$. Lemma \ref{tblemma} implies that $Y_i$ is biholomorphic to $\CC^2$ with the flat metric, but this is a contradiction, since any bubble must have a point with non-zero curvature. 
Since there are no energy concentration points, Theorem \ref{compactness} follows
(see Section~\ref{cpsubsec} below for the remainder of the argument).
\end{proof}

When $X$ is not Stein, the vanishing of holomorphic curves makes the above topological argument fail.
Heuristically, the orbifold singularity in $X_\infty$ is formed by the vanishing (in area) of some (real) $2$-dimensional submanifolds in $X$ which represent some homology classes. When those submanifolds are holomorphic curves,
the vanishing of their areas implies the degeneracy of the K\"ahler form, which leads to a contradiction.
The difficulty is, a priori, the diffeomorphisms in the pointed Cheeger-Gromov convergence
could be far from being holomorphic. They could map some submanifold in $X$ which is far from being
holomorphic to a holomorphic curve in $X_\infty$.
As a result, the integral of K\"ahler form over those submanifolds could be much smaller than their areas
and one could conclude nothing about the  degeneracy of the K\"ahler form.
Our strategy is to ``chase'' the submanifolds in $X$ that homologically contract to form the singularity in $X_\infty$, and show that
they are ``very close'' to being holomorphic. The fact that $X$ is birationally equivalent with $X_\infty$ plays
an important role in our proof.
Our first theorem in this section deals with this difficulty. 
Roughly, it says that, when $i$ is sufficiently large,
the error between the diffeomorphism $\psi_i$ in 
the pointed Cheeger-Gromov and a holomorphic map is very small.
\begin{theorem}
\label{CGCB}
Consider the convergent subsequence in Theorem \ref{compactness}, 
where $X$ is assumed to be minimal,
\begin{align}
(X,g_i,J,x_i)\xrightarrow{\mathrm{pointed \; Cheeger-Gromov}} (X_\infty,g_\infty,J_\infty,x_\infty)
\end{align}
with uniform ALE asymptotic rate $-2<-\mu<-1$.
For any $\delta>0$, there exists a diffeomorphism $\psi_i: X_\infty\setminus B_{\delta}(x_\infty)\rightarrow X$,
with $\psi_i^*g_i\to g_\infty$, $\psi_i^*J\to J_\infty$.
Then there exists a surjective bimeromorphism $\Phi: X\rightarrow X_\infty$, i.e., $X$ is the minimal resolution
of $X_\infty$,
such that on $X_\infty\setminus B_\delta(x_\infty)$
\begin{align}
\label{convest}
\|\Phi\circ\psi_i - Id\|_{C^{k,\alpha}_{d_N}(g_\infty)} < \epsilon(i\;|\;\delta,k)
\end{align}
where $d_N$ is the highest degree among holomorphic functions $u^1,\ldots,u^N$.
\end{theorem}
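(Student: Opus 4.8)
The plan is to treat \eqref{convest} as two linked assertions — the existence of the morphism $\Phi$ and the weighted estimate — and to feed both from the holomorphic-function package of Proposition~\ref{uniform_rate}. That proposition gives, on a fixed end $A_{R,\infty}(0)\subset\RR^4/\Gamma$, a minimal set of generators $u^1,\dots,u^N$ of the algebra of finite-degree holomorphic functions on $X$, their strong limits $w^1,\dots,w^N$ on $X_\infty$, and a ring isomorphism $R(X)\cong R(X_\infty)$ sending $u^j\mapsto w^j$. Geometrically this identifies the Remmert reductions of $X$ and of $X_\infty$ with a single normal Stein surface $W$; writing $u\colon X\to W$ and $w\colon X_\infty\to W$ for the two contractions, $\Phi_0:=w^{-1}\circ u$ is well defined and biholomorphic off the compact preimages of $\mathrm{Sing}(W)$, in particular on $X\setminus K$.

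First I would promote $\Phi_0$ to a global morphism. It extends to a bimeromorphism $\Phi\colon X\dashrightarrow X_\infty$ with $w\circ\Phi=u$. By Proposition~\ref{minimal_ALE_moduli} minimality of $X$ means $X$ is the minimal resolution of $W$; since every resolution of $W$ dominates the minimal one there is a morphism $\pi\colon\widetilde{X_\infty}\to X$, and ``$X$ is the minimal resolution of $X_\infty$'' is then equivalent to $\pi$ being an isomorphism, i.e.\ to the absence of $(-1)$-curves in $\widetilde{X_\infty}$. To exclude a $(-1)$-curve $C\subset\widetilde{X_\infty}$: it cannot lie in the exceptional set of $\widetilde{X_\infty}\to X_\infty$, since the minimal resolution of a quotient singularity has all components of self-intersection $\le -2$; hence $C$ is $\pi$-contracted and maps onto a curve in the exceptional fibre of $w$ over a quotient point of $W$. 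Comparing the two (partial) resolutions of that quotient singularity and pulling $C$ back to $X$ is designed to produce a $(-1)$-curve in $X$, contradicting minimality. Thus $\Phi\colon X\to X_\infty$ is a surjective birational morphism, and minimality of $X$ makes it the minimal resolution.

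For the estimate I would realize the maps inside $\CC^N$. Since $\Phi$ is holomorphic with $w\circ\Phi=u$ we have $w^j\circ(\Phi\circ\psi_i)=u^j\circ\psi_i$ on $X_\infty\setminus B_\delta(x_\infty)$, so it is enough to prove $u^j\circ\psi_i\to w^j$ in $C^{k,\alpha}_{d_\Psi(u^j)}(g_\infty)$ for each $j$ (each degree $\le d_N$) and then invert $w$. Because $u^j$ is $J$-holomorphic, $u^j\circ\psi_i$ is $\psi_i^*J$-holomorphic, hence $\Delta_{\psi_i^*g_i}(u^j\circ\psi_i)=0$; from $\psi_i^*g_i\to g_\infty$ in $C^{k,\alpha}_{-\mu}$ and the Laplacian comparisons \eqref{laplace_compare1}--\eqref{laplace_compare2} this forces $\Delta_{g_\infty}(u^j\circ\psi_i)$ to be $\epsilon(i)$-small in the relevant weighted norm. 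Near infinity $\psi_i$ coincides up to $\epsilon(i)$ with the explicit comparison map $H_i^{-1}\circ A_i\circ\pi$ of \eqref{CG_diffeomorphism} (the $\U(2)$-factor $A_i$ being absorbed into the coordinate), so there $u^j\circ\psi_i$ is $C^{0,\alpha}$-close to $w^j$ by the convergence ${H_i}_*u^j\to{H_\infty}_*w^j$ of Proposition~\ref{uniform_rate}; on the compact annulus near $\partial B_\delta(x_\infty)$ the smooth convergence $\psi_i^*g_i,\psi_i^*J\to g_\infty,J_\infty$ forces $u^j\circ\psi_i$ to converge smoothly to a $J_\infty$-holomorphic function with the same end behaviour, hence to $w^j$ by the identity theorem. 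Inserting the almost-harmonicity, the decay at infinity and these boundary data into the Bartnik-type weighted elliptic estimates (as in Proposition~\ref{uniform_rate}) upgrades this to $u^j\circ\psi_i\to w^j$ in $C^{k,\alpha}_{d_\Psi(u^j)}$, hence in $C^{k,\alpha}_{d_N}$. On the open locus where $w$ is an immersion this yields $\Phi\circ\psi_i=w^{-1}\circ(w^j\circ\psi_i)_j\to\mathrm{Id}$ in $C^{k,\alpha}_{d_N}$; near the finitely many compact curves inside $X_\infty\setminus B_\delta(x_\infty)$ where $w$ fails to be immersive one argues instead directly from the smooth convergence of $\psi_i^*J$ together with $\Phi$ being a fixed biholomorphism there, and the pieces assemble to \eqref{convest}.

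The genuine difficulty is the second paragraph: showing $\Phi_0$ has no indeterminacy, equivalently excluding $(-1)$-curves in the minimal resolution of $X_\infty$. Boundedness of $\Phi_0$ near the contracted curves is not enough (a meromorphic surface map can still be indeterminate), so one must argue combinatorially with the exceptional configurations of the two resolutions of each quotient singularity of $W$, and it is exactly here that the minimality of $X$ is used in an essential way. Once $\Phi$ is known to be a morphism, the rest is the weighted elliptic bookkeeping above, parallel to what was already done in Proposition~\ref{uniform_rate}.
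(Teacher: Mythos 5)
Your architecture matches the paper's: extract a bimeromorphism from the ring isomorphism $R(X)\cong R(X_\infty)$ provided by Proposition~\ref{uniform_rate}, reduce the existence of a morphism $\Phi\colon X\to X_\infty$ to the absence of $(-1)$-curves in $\widetilde{X_\infty}$, and then establish the weighted estimate via the convergence of $u^j\circ\psi_i$ to $w^j$ and Bartnik-type elliptic estimates. The first and third steps are in line with what the paper does.

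The critical gap is in your treatment of the middle step, which is exactly Lemma~\ref{no} of the paper, and which you correctly flag as ``the genuine difficulty.'' Your proposal is to ``argue combinatorially with the exceptional configurations of the two resolutions of each quotient singularity of $W$'' and to ``pull $C$ back to $X$ to produce a $(-1)$-curve in $X$.'' This cannot work: there is no algebraic or combinatorial obstruction to $\widetilde{X_\infty}$ having a $(-1)$-curve. Concretely, take $W=\CC^2/\ZZ_n$ with minimal resolution $X=\oo_{\CP^1}(-n)$; blow up one point on the $(-n)$-curve to get the chain $(-1,-(n+1))$ and contract the $-(n+1)$-curve to a $\frac{1}{n+1}(1,1)$-orbifold point. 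The resulting $X_\infty$ is a perfectly legitimate normal partial resolution of $W$ with quotient singularities, its minimal resolution $\widetilde{X_\infty}$ has a $(-1)$-curve, and the minimal resolution of $W$ is still the original $X$. So the $(-1)$-curve $C$ you want to exclude is contracted by $\pi\colon\widetilde{X_\infty}\to X$ to a \emph{point}; it does not ``pull back to a $(-1)$-curve in $X$,'' and the minimality of $X$ alone yields no contradiction.

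What actually excludes this configuration is the \emph{metric} convergence, not the combinatorics, and this is where the paper's proof is genuinely different from your sketch. The paper's Lemma~\ref{no} argues by contradiction: given the hypothetical $(-1)$-curve $E_{\infty,-1}$ with image $q=f(\tilde E_{\infty,-1})\in X$ a point, it uses the Greene--Krantz isotopy result to produce coordinate diffeomorphisms $\nu_i$ close to the identity that pull $\psi_i^*J$ back to $J_{Euc}$ on a polydisc $T$ around a regular point $p\in E_{\infty,-1}$; it then shows the biholomorphism $\zeta_i=\phi\circ\tau_i\circ\phi^{-1}\circ\nu_i^{-1}$ is $\epsilon(i)$-close to the identity, hence that $f\circ\pi^{-1}$ is $\epsilon(i)$-close to $\psi_i$ on $T$. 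Taking two points $p\ne p'\in E_{\infty,-1}\cap T$ with $d_{g_\infty}(p,p')>C'\delta$, the contraction $f(p)=f(p')=q$ contradicts the metric comparison $|(f\circ\pi^{-1})^*g_i-g_\infty|\le\epsilon(i)$, which forces $d_{g_i}(f(p),f(p'))>C'\delta/2>0$. This is an analytic argument resting on the Cheeger--Gromov convergence and Lempert-type CR rigidity, not on the exceptional configuration, and it is the heart of the theorem. Until that step is supplied, your proposal does not establish Lemma~\ref{no}, and hence does not yield the morphism $\Phi$ required for \eqref{convest}.
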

\begin{proof}
In the following proof we will denote $E = \cup_j E_j$ as the union of exceptional divisors in $(X,J)$
and $E_{\infty} = \cup_j E_{\infty,j}$ as the  union of exceptional divisors in 
$(X_{\infty},J_{\infty})$.

From Section~\ref{versal}, we know the complex structure $J$ is determined by holomorphic functions $u^1,\ldots,u^N$ with 
polynomial growth rate on $X$ that satisfy certain polynomial relations. Therefore we have a mapping
\begin{align}
\pi_X : X \rightarrow Z,
\end{align}
where $Z \subset \CC^N$ is a Stein space given by the image of the mapping 
$\pi_X(p) = (u^1(p), \dots u^N(p))$. 
Note that $\pi_X$ is the contraction of $E$.  

Furthermore, by Proposition \ref{uniform_rate},
$(\psi_i^*u^1,\ldots,\psi_i^*u^N)$ converge to holomorphic functions $(u^1_\infty,\ldots,u^N_\infty)$
on $X_\infty\setminus B_\delta(x_\infty)$, which satisfy the same polynomial relation(s) as $u^1,\ldots,u^N$. 
Since $X_\infty$ is one-convex,
$(u^1_\infty,\ldots,u^N_\infty)$ can be extended to holomorphic functions on $B_\delta(x_\infty)$.
Then we have a holomorphic map:
\begin{align}
\label{map_infty}
\pi_{X_\infty} : X_{\infty} \rightarrow Z, 
\end{align}
where $\pi_{X_\infty}(p)= (u^1_\infty(p),\ldots,u^N_\infty(p))$.
The image is exactly $Z$ because outside of a large ball 
the mappings $\pi_{X_\infty}(X_{\infty} \setminus B_R(x_{\infty}) \subset Z$
and the image of $\pi_{X_\infty}$ must be isomorphic to $Z$ by the 
 proof of Proposition \ref{minimal_ALE_moduli}.
Note that $\pi_{X_{\infty}}$ is the contraction of $E_{\infty}$.

Denote $\widetilde{X_\infty}$ as the minimal resolution of $X_\infty$ with the projection map
$\pi: \widetilde{X_\infty}\rightarrow X_\infty$. 
Since $X$ is minimal, and $\widetilde{X_{\infty}}$ is smooth and in the same birational class,  
Proposition \ref{minimal_ALE_moduli} implies this 
existence of a surjective bimeromorphism 
\begin{align}
f: \widetilde{X_\infty} \rightarrow X.
\end{align}
We summarize all of the maps in the following diagram

\begin{equation}
\begin{tikzcd}[column sep=large]
\label{fcd}
\widetilde{X_{\infty}} \arrow[d, "f"]   & \arrow[l, hook'] \widetilde{X_{\infty}} \setminus
\pi^{-1}( B_{\delta}(x_{\infty}))  \arrow[d,"\pi"]  \\
 X    \arrow[d,"\pi_X"] & \arrow[l,"\psi_i"{below}] X_{\infty} \setminus B_{\delta}(x_{\infty}) 
\arrow[dl,"\pi_{X_\infty}"] \\
Z &  \\
\end{tikzcd}.
\end{equation}
Consider the mapping $ A= \pi_X\circ f \circ (\pi_{X_\infty} \circ\pi)^{-1} : Z \rightarrow Z$.
It is easy to see this mapping is invertible, and thus is an automorphism of $Z$.
Since $X$ is minimal, any automorphism of $Z$ can be lifted up to an automorphism of $X$.
Then there exists an automorphism $B: X\rightarrow X$, such that $A^{-1}\circ \pi_X = \pi_X \circ B$.
Redefining $f$ to be
\begin{align}
B\circ f: \widetilde{X_\infty}\rightarrow X
\end{align}
We then have
\begin{align}
\label{fcond}
\pi_X\circ f \circ (\pi_{X_\infty} \circ \pi)^{-1} = Id :Z \to Z.
\end{align}
Denote $(E_\infty)_\eta$ as the $\eta$-tubular neighborhood of $E_\infty$
in $X_\infty$ with respect to $g_\infty$.
Restrict ${f}$ on $X_\infty\setminus E_\infty$, then we have a biholomorphic map
$f: X_\infty\setminus E_\infty$ onto its image in $X$.
As a result of this, by part (3) in Proposition \ref{uniform_rate} we can
choose a radius $R>0$ sufficiently large so that the composite 
\begin{align}
\tau_i:=\pi\circ f^{-1}\circ\psi_i: X_\infty\setminus B_R(x_\infty) \rightarrow X_\infty
\end{align}
is well-defined, 
since by the uniform ALE asymptotic rate, when $R$ is sufficiently large,
any holomorphic curve contracted by $f$ is contained in $\widetilde{B_R(x_\infty)}$.

By \eqref{fcond}, we have 
\begin{align}
\tau_i' \equiv \pi_{X_\infty} \circ \tau_i \circ \pi_{X_\infty}^{-1} 
= \pi_X \circ\psi_i \circ  \pi_{X_\infty}^{-1}  : Z \setminus U_R
\rightarrow Z
\end{align}
where $U_R : = \pi_{X_\infty}(B_R(x_\infty))$.

We then have
\begin{align}
\lim_{i \to \infty} \tau_i' = \lim_{\i \to \infty}  (\pi_X \circ\psi_i)  
\circ  \pi_{X_\infty}^{-1}
= \pi_{X_{\infty}} \circ \pi_{X_\infty}^{-1} = Id,
\end{align}
which implies that 
\begin{align}
\lim_{i \to \infty} \tau_i = Id : X_{\infty} \setminus B_R(x_{\infty}) \rightarrow X_{\infty},
\end{align}
where the convergence is in any $C^{k,\alpha}_{d_N}$-norm on $X_\infty\setminus B_R(x_\infty)$, 
since any $\psi_i^*u^j$ converges in $C^{k,\alpha}_{d_N}$-norm, which implies
$\|\pi_X\circ\psi_i-\pi_{X_\infty}\|_{C^{k,\alpha}_{d_N}(X_\infty\setminus B_R(x_\infty))}$ converges to $0$.

We next want to show that $\tau_i$ converges to the identity away from $B_{\delta}(x_{\infty})$.  For this, we need a surjective bimeromorphism from $X_i$ to $X_\infty$.
Since $(X,J)$ is minimal, such a mapping does not exist precisely when there is a $(-1)$-curve in $\widetilde{X_\infty}$.
The following lemma shows that this cannot happen.
\begin{lemma}
\label{no}
There exists no $(-1)$-curve in $\widetilde{X_\infty}$.
\end{lemma}
\begin{proof}
Without loss of generality, 
assume there exists a single  $(-1)$-curve $\tilde{E}_{\infty, -1} \subset \widetilde{X_\infty}$ which is not in the image of any birational map from $X$ to $\widetilde{X_\infty}$
(the argument for multiple $(-1)$-curves is similar).
Denote the image of $\pi(\tilde{E}_{\infty,-1})$ in $X_\infty$ as $E_{\infty,-1}$.
Since $ \widetilde{X_\infty}$ is the minimal resolution of $X_{\infty}$ and
$\tilde{E}_{\infty,-1}$ is not contracted by $\pi$, $E_{\infty,-1}$ has its regular part non-empty.

Denote $q = f(\tilde{E}_{\infty,-1})$, which is a single point since $X$ is a minimal K\"ahler surface with no $(-1)$-curve in it.  
If $\tau_i = \pi\circ f^{-1}\circ \psi_i$ can not be extended to a map on $X_\infty\setminus B_\delta(x_\infty)$, then for any $i$ sufficiently large,
$q\in \psi_i(X_\infty\setminus B_\delta(x_\infty))$.
Denote $p_{i} = \psi_i^{-1}(q)$.
Then as $i\to\infty$, up to a subsequence, 
$p_{i}$ converges to a point $p_{\infty}$ in the closure of $X_\infty\setminus B_\delta(x_{\infty})$.
Without the loss of generality, we can assume that $p_{\infty}\in X_\infty\setminus B_{2\delta}(x_\infty)$,
since we can always shrink $\delta$ to $\frac{\delta}{2}$.
Let $c>0$ be a positive number which can be chosen to be arbitarily small, and $B_{c\cdot\delta}(p_{\infty})$ 
be a geodesic ball centered at $p_{\infty}$ with radius of $c\cdot\delta$.

Then $\tau_i$ can be extended to a mapping
\begin{align}
\tau_i = \pi\circ f^{-1}\circ \psi_i: W \rightarrow X_\infty,
\end{align}
where $W =  X_\infty\setminus \{ B_\delta(x_\infty) \cup B_{c\delta}(p_{\infty}) \}.$

Let $(E_{\infty})_{\eta}$ denote the tubular neighborhood of $E_{\infty}$ with respect to $g_\infty$.
On $W\setminus (E_\infty)_\eta$, by the convergence of complex structure, $\tau_i^*u^1_\infty,\ldots,\tau_i^*u^N_\infty$
converge to some holomorphic functions $v^1,\ldots,v^N$. Since we have shown that $\tau_i$ converges to
$Id$ on $X_\infty\setminus B_R(x_\infty)$, $v^j = u^j_\infty$ outside of $B_R(x_\infty)$.
Then by the unique extension of holomorphic functions, $v^j = u^j_\infty$ on $W\setminus (E_\infty)_\eta$.
Since $(u^1_\infty,\ldots,u^N_\infty)$ embeds $W\setminus (E_\infty)_\eta$ into $\CC^N$, this implies that for any $\eta > 0$, 
\begin{align}
\lim_{\i \to \infty} \tau_i =  Id : W \setminus (E_{\infty})_{\eta} \rightarrow X_{\infty}.
\end{align}
Let $p\in E_{\infty,-1}\cap W$ be a point in the regular part of 
$E_{\infty,-1}$ (which is non-empty for $\delta$ sufficiently small)
and such that $B_{C \delta}(p)$ does not intersect any 
other exceptional curve in $E_{\infty}$ for some $C > 0$.   
Near $p$, we have a holomorphic coordinate $\phi =  (z^1,z^2): U \to \CC^2$ 
 of $(X_\infty, J_{\infty})$ 
with the property that $E_{\infty,-1} \cap U = \{x \in U \ | \ z^1(x) =0\}$, and $p = (0,0)$.
Define $T$ as a small polydisc neighborhood of $p$ by 
$T := \{x \in U \ | \ |z^1(x)|\leq C\cdot \delta, |z^2(x)|\leq C\cdot \delta\}$,
such that $T\subset W$.

By a result of Greene-Krantz \cite[Theorem 1.13]{GreeneKrantz},
there exists a diffeomorphism $\nu_i: \phi(T)\rightarrow \CC^2$, such that, $|\nu_i-Id|<\epsilon(i \;|\; \delta)$,
and $\nu_i^*(J_{Euc}) = \phi_* \psi_i^*(J)$ on $T$.
We can choose $\eta = \frac{1}{2}C\cdot\delta$.
On the annulus $A := \{ (z^1,z^2) : \frac{3}{4}C\cdot\delta<|z_1|<\frac{5}{4}C\cdot\delta\}$, 
$\phi \circ \tau_i \circ \phi^{-1}$ converges to $Id$.
The mapping 
\begin{align}
\zeta_i := \phi \circ \tau_i\circ \phi^{-1} \circ \nu_i^{-1}: T  \rightarrow \CC^2
\end{align}
is biholomorphic to its image since
\begin{align}
\begin{split}
(\zeta_i)_* J_{Euc} &= (\phi \circ \tau_i)_* ( \nu_i \circ \phi)^{-1}_* J_{Euc}\\
& = \phi_* (\tau_i \circ \psi_i^{-1})_* J
= \phi_* (\pi \circ f^{-1})_* J = \phi_* J_{\infty} = J_{Euc}. 
\end{split}
\end{align}
Therefore $\zeta_i$ can be represented as a pair of holomorphic functions
$(\zeta^1_i,\zeta^2_i)$,
and by the maximum principle, we have $|\zeta_i- Id| < \epsilon(i\;|\;\delta)$ on $T$.
We must therefore have
\begin{align}
f \circ \pi^{-1}  := \psi_i\circ \phi^{-1} \circ \nu_i^{-1}\circ\zeta_i^{-1} \circ \phi 
\end{align}
on $T$, since both sides are holomorphic function which agree on $\phi^{-1}(A)$.
By the estimates of $\psi_i,\nu_i,\zeta_i$ above, we have $|f \circ \pi^{-1}-\psi_i|<\epsilon(i\;|\;\delta)$ on $T$.

Choose another point $p'\neq p \in E_{\infty,-1}\cap T$ such that the distance $d_{g_\infty}(p,p')>C'\cdot \delta$ for some $C'>0$.
Recall that $f$ contracts $\tilde{E}_{\infty, -1}$ to a point,
so $f \circ \pi^{-1}$ maps $E_{\infty,-1} \cap T$ to a point, 
therefore $f(p) = f(p')$. 
However, by the estimates above
\begin{align}
| (f \circ \pi^{-1})^*g_i - g_\infty|\leq C\cdot|\psi_i^*g_i-g_\infty|\leq \epsilon(i\;|\;\delta),
\end{align}
on $T$ so we must have $d_{g_i}(f(p),f(p'))> \frac{C'\cdot \delta}{2}$
when $i$ is sufficiently large.
This implies a contradiction, and thus there is no such $(-1)$-curve in $\widetilde{X_\infty}$ as assumed at the beginning of the proof.  
\end{proof}
We now complete the proof of Theorem \ref{CGCB}.
By Lemma \ref{no}, 
the mapping $f: \widetilde{X_{\infty}} \rightarrow X$, which we can assume satisfies \eqref{fcond}, is an isomorphism. 
Consider the bimeromorphism 
\begin{align}
\label{bimeromorphism}
\Phi \equiv \pi \circ f^{-1}  :X \rightarrow X_\infty
\end{align}
which satisfies $\pi_{X_\infty}\circ \Phi\circ {\pi_X}^{-1} = Id$ on $Z$.
By a similar argument as in the analysis above, the composite
\begin{align}
\tau_i := \Phi\circ \psi_i: X_\infty \setminus B_\delta(x_\infty)\rightarrow X_\infty
\end{align}
converges to $Id$.  Clearly, the estimate \eqref{convest} is satisfied on $X_{\infty}  \setminus B_\delta(x_\infty)$.
\end{proof}
\begin{remark}
In the case when $X$ is asymptotic to $\CC^2/\Gamma$ and $\Gamma$ is a finite subgroup of $SU(2)$, i.e., the case
of gravitational instantons, by \cite{Bando}, the limit $X_\infty$ is an Einstein orbifold. It is shown that the bubble-tree must be diffeomorphic to a cyclic quotient of a hyperk\"ahler ALE manifold. 
It is a direct consequence of this that there is no $(-1)$-curve in $\widetilde{X_\infty}$.
This illustrates that the singularity of ALE SFK orbifold limit could be much more complicated than in the Ricci-flat case. 
\end{remark}

We end this section with the following direct consequence of Theorem \ref{CGCB}. 
\begin{corollary}
\label{nosmooth}
There are no smooth energy concentration points in $X_\infty$.
\end{corollary}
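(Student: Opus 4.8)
The plan is to contradict the existence of a smooth energy concentration point using the minimal resolution $\Phi\colon X\to X_\infty$ supplied by Theorem~\ref{CGCB} together with the Betti-number bookkeeping of the bubble tree. As in the rest of this section, I may assume $X_\infty$ has a single energy concentration point $x_\infty$, the general case following by applying the argument at each smooth energy concentration point, over which $\Phi$ restricts to a local biholomorphism by minimality of $X$. Arguing by contradiction, suppose $x_\infty$ is a smooth point of $X_\infty$. Since the orbifold locus of $X_\infty$ is contained in its set of energy concentration points, in this case $X_\infty$ is a smooth complex surface.

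The first step is to note that the minimal resolution of a smooth surface is the surface itself, so the surjective bimeromorphism $\Phi\colon X\to X_\infty$ produced by Theorem~\ref{CGCB} must be an isomorphism; equivalently, $\Phi$ is a composition of point blow-ups and minimality of $X$ forces it to be trivial. In particular $b_2(X)=b_2(X_\infty)$. On the other hand, for $i$ sufficiently large $X$ is diffeomorphic to $X_\infty \# Y_1 \# \cdots \# Y_r$, where $Y_1$ is the first bubble over $x_\infty$, so the Mayer--Vietoris formula \eqref{b2sum} gives $b_2(X)=b_2(X_\infty)+\sum_{j=1}^r b_2(Y_j)$. Combining the two identities yields $\sum_{j=1}^r b_2(Y_j)=0$, hence $b_2(Y_j)=0$ for every $j$; in particular $b_2(Y_1)=0$.

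Since $x_\infty$ is a \emph{smooth} energy concentration point, the first bubble $Y_1$ is an asymptotically Euclidean SFK orbifold, its group at infinity being trivial. Hence Lemma~\ref{tblemma} applies and gives that $Y_1$ is biholomorphic to $\CC^2$ and carries the flat metric. But $Y_1$ arises as a rescaled Cheeger--Gromov limit in which a definite amount $\epsilon_0/2$ of curvature energy is retained, so, exactly as in the proof of the Stein case above, $Y_1$ must have a point of nonzero curvature --- a contradiction. The only substantial input here is Theorem~\ref{CGCB}, which identifies $X$ with the minimal resolution of $X_\infty$; after that the argument is elementary, and the only point requiring a little care is the standard fact that a smooth energy concentration point produces an asymptotically Euclidean (rather than ALE) first bubble, so that the group at infinity in Lemma~\ref{tblemma} is trivial.
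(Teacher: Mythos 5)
Your proposal matches the paper's proof essentially line for line: since a smooth energy concentration point forces $\Phi\colon X\to X_\infty$ from Theorem~\ref{CGCB} to be an isomorphism, the Betti-number identity \eqref{b2sum} forces $b_2(Y_1)=0$ for the first bubble, and Lemma~\ref{tblemma} then identifies $Y_1$ with flat $\CC^2$, contradicting the retained curvature energy. The paper's version is terser, but the logic, the invocations of Theorem~\ref{CGCB}, \eqref{b2sum}, and Lemma~\ref{tblemma}, and the ``without loss of generality, a single smooth concentration point'' reduction are all the same as yours.
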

\begin{proof}
Without loss of generality, assume $x_\infty\in X_\infty$ is a smooth energy concentration point and there is no other energy concentration point
in $X_\infty$. 
Then the bimeromorphism $\Phi$ from $X$ to $X_\infty$ is moreover an isomorphism.
Then by \eqref{b2sum} and Lemma~\ref{tblemma}, the bubble that degenerates at $x_\infty$ is $\CC^2$ with the flat metric, which is a contradiction.  
\end{proof}
\section{Compactness III. Bubbles are resolutions}
\label{bubbles}

Our first goal is to show that each bubble in the bubble tree is a resolution of the corresponding singularity 
in the previous bubble.
Here are some notations and facts.
Denote the rescaled sequence $(B_\delta(x_i),\frac{1}{r_i^2} g_i,x_i)$
as $(Y_i,g'_i,y_i)$, where $B_\delta(x_i)$ is a geodesic ball of radius $\delta$ with respect to $g_i$, 
and the scaling factor $r_i$ is to be determined below. By Theorem \ref{CGCB}, there exists a $\delta>0$ such that
$B_\delta(x_i)$ contains and only contains holomorphic curves that are contracted to $\{x_\infty\}$ in the limit.
Specifically, there exists a bimeromorphism $\Phi$, which maps $X$ onto $X_\infty$,
and $\Phi\circ\psi_i$ converges to $Id$ on $X_\infty\setminus B_\delta(x_i)$.
Then we also have $\Phi^{-1}(x_\infty)\subset B_\delta(x_i)$,
where $\Phi^{-1}(x_\infty)$ is a union of exceptional divisors ${E'}_{1}\cup\ldots\cup {E'}_{m}$.

The natural scale of $r_i$ to choose is the ``energy scale'', i.e., choose $r_i$ such that
\begin{align}
\label{energy_scale}
\int_{Y_i\setminus B_1(y_i)} \|Rm(g'_i)\|^2 dV_{g'_i} = \frac{\epsilon_0}{2}
\end{align}
where $\epsilon_0$ is the energy threshold introduced in Section~\ref{volume}.
The naturality is in the sense that, the ``energy scale'' preserves the topology,
i.e., after gluing the ``bubble tree'' to the limit space, we will acquire the topology of the
original manifold \cite{Bando}. We begin with the following lemma, which says that the diameter of the exceptional divisors is controlled on the ``energy scale''.
\begin{lemma}
\label{scale_bound}
Let $(Y_i,g'_i,y_i)$ be the rescaled sequence defined above, with the scaling factor chosen to be the
``energy scale'', i.e., the property \eqref{energy_scale} is satisfied.
Then there exists a constant $R_{en}>0$ independent of $i$, such that, when $i$ is sufficiently large,
each holomorphic curve in $Y_i$ is contained in the geodesic ball $B_{R_{en}}(y_i)$.
\end{lemma}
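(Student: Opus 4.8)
The plan is to argue by contradiction, using three ingredients: the Stein contraction $\pi_X\colon X\to Z\subset\CC^N$ from Section~\ref{versal}, which collapses the exceptional set of $X$ and is biholomorphic away from it; the fact recalled just before the statement that the compact holomorphic curves contained in the complex manifold $Y_i=B_\delta(x_i)$ are unions of the exceptional divisors $E'_1,\dots,E'_m$, whose union $E=\Phi^{-1}(x_\infty)$ is a fixed compact set; and the embedding-at-infinity structure of the first bubble $(Y_\infty,g'_\infty,y_\infty)$, to which $(Y_i,g'_i,y_i)$ converges. By the energy-scale normalization \eqref{energy_scale} this bubble carries energy $\geq\epsilon_0/2$ outside $B_1(y_\infty)$, hence is nonflat, and by Corollary~\ref{nosmooth} its asymptotic group $\Gamma'$ is nontrivial.

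First I would establish the structure of $Y_\infty$ by running the argument of Proposition~\ref{uniform_rate} on the rescaled sequence. The complex structure $J$ is unchanged by the rescaling and converges to that of $Y_\infty$; after translating $Z$ so that the image of $x_\infty$ under the natural map $X_\infty\to Z$ is the origin, and applying the block dilations $T_i$ of $\CC^N$ (by $r_i^{-k_j}$ in the degree-$k_j$ coordinates), the maps $T_i\circ\pi_X$, read off on $Y_i$ via the Cheeger--Gromov diffeomorphisms, converge --- in $C^{k,\alpha}_{d_N}$ on the uniform ALE region and smoothly on compact subsets away from the exceptional set --- to a holomorphic map $\pi_{Y_\infty}\colon Y_\infty\to Z'\subset\CC^N$ which collapses the exceptional set of $Y_\infty$ and is biholomorphic away from it. Consequently there is a fixed radius $R_0>0$ for which $\pi_{Y_\infty}$ embeds $Y_\infty\setminus B_{R_0}(y_\infty)$ into $\CC^N$.

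Next I would feed in the neck control of the bubble-tree construction (see \cite{Bando,TV2}): no curvature escapes into the neck region joining the first bubble to the base, and all deeper bubbles of $Y_\infty$ occur within a fixed $g'_i$-distance of $y_i$. Enlarging $R_0$ to absorb those, the metrics $g'_i$ restricted to the neck $\{\,R_0\le d_{g'_i}(\cdot,y_i)\le\delta/r_i\,\}$ are, scale by scale, $C^{k,\alpha}$-close to the flat cone $\CC^2/\Gamma'$, with $T_i\circ\pi_X$ correspondingly close to the standard embedding $\CC^2/\Gamma'\setminus B_{R_0}\hookrightarrow\CC^N$; combined with the injectivity of $\pi_X$ (hence of the affine map $T_i\circ\pi_X$) off $E$, this gives that for $i$ large $T_i\circ\pi_X$ is injective on the entire neck. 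Now if the lemma failed, then after passing to a subsequence there would be compact holomorphic curves $C_i\subset Y_i$ with $\sup_{C_i}d_{g'_i}(\cdot,y_i)\to\infty$. Each $C_i$ is a union of components of $E$, so $T_i\circ\pi_X$ is constant on $C_i$; on the other hand, for $i$ large $C_i$ contains a point at $g'_i$-radius strictly between $R_0$ and $\delta/r_i$, and a neighborhood of that point in $C_i$ is a one-dimensional analytic set lying in the neck, on which $T_i\circ\pi_X$ is simultaneously constant and injective --- a contradiction. Hence $\sup_{C_i}d_{g'_i}(\cdot,y_i)\le R_0$ for $i$ large, and $R_{en}:=R_0$ works.

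The main obstacle is the neck estimate of the third paragraph: one must rule out any hidden curvature concentration --- in particular any further shrinking holomorphic curve --- in the long neck $\{R_0\le d_{g'_i}(\cdot,y_i)\le\delta/r_i\}$, and must arrange the single global normalization $T_i$ to be compatible with the cone coordinates on every dyadic scale of the neck, so that local injectivity of $T_i\circ\pi_X$ upgrades to injectivity on all of it; this is precisely what makes $R_{en}$ independent of $i$. One variant that sidesteps part of this is to treat the case $\sup_{C_i}d_{g'_i}(\cdot,y_i)\to\infty$ by a secondary rescaling by $\sup_{C_i}d_{g'_i}(\cdot,y_i)$, reducing it to the bounded case at the cost of producing a shrinking holomorphic curve in a flat-cone limit, which is then excluded by a Gauss--Bonnet energy argument as in Lemma~\ref{tblemma}. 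Everything else --- the structure of $Y_\infty$, the reduction to components of $E$, and the injectivity of $\pi_X$ off $E$ --- is formal given Proposition~\ref{uniform_rate} and Section~\ref{versal}.
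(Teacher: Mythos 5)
Your proposal takes a genuinely different route from the paper, and in doing so it misses a much simpler argument. The paper's proof is a short, entirely local one: by the choice of the energy scale and the Tian--Viaclovsky $\epsilon$-regularity theorem, the pointwise bound $\|Rm(g'_i)\| < C r^{-2}$ holds on $Y_i\setminus B_1(y_i)$ uniformly in $i$; combined with BKN-type coordinates this makes the rescaled metrics uniformly close to the flat cone for $r > R$, and in that region $r^2$ is a strictly plurisubharmonic function. A compact holomorphic curve $E\subset Y_i$ that met $Y_i\setminus B_R(y_i)$ would then have an interior maximum of the (strictly) subharmonic function $r^2|_E$, contradicting the maximum principle. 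No appeal to the birational model, the Stein contraction $\pi_X$, or the structure of the first bubble is needed; the argument works for any compact holomorphic curve. By contrast, your proposal routes through the contraction $\pi_X:X\to Z\subset\CC^N$, the block dilations $T_i$, the identification of $Y_\infty$ as a partial resolution, and a neck-injectivity claim.

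Beyond being more elaborate, your argument has a genuine gap, which you yourself flag as ``the main obstacle'': you need a uniform, scale-by-scale convergence of $(Y_i, g'_i)$ on the entire neck region $\{R_0 \le d_{g'_i}(\cdot,y_i)\le \delta/r_i\}$ to the flat cone, strong enough to promote the pointwise non-degeneracy of $T_i\circ\pi_X$ to injectivity across a region of unbounded $g'_i$-diameter. You cite the general bubble-tree theory for this, but that theory quantizes curvature concentration and does not by itself hand you the quantitative $C^1$-closeness of $T_i\circ\pi_X$ to the standard cone embedding that the contradiction step uses. Moreover, the step ``$T_i\circ\pi_X$ is simultaneously constant and injective on a neighborhood in $C_i$'' is not a contradiction as written: $\pi_X$ is injective only on $X\setminus E$, and $C_i\subset E$, so constancy on $C_i$ is consistent with that injectivity. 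What you actually need is that the approximate non-degeneracy of $T_i\circ\pi_X$ in the neck (as a map of full rank) precludes its vanishing derivative along a one-dimensional analytic subset there --- a correct idea, but one that requires precisely the unresolved neck estimate. The ``secondary rescaling'' variant also stops short of a proof: ``a shrinking holomorphic curve in a flat-cone limit'' has no curve in the limit, so one still needs a quantitative local Gauss--Bonnet or energy argument on the shrinking annulus carrying the curve, which is not supplied. The paper's $\epsilon$-regularity plus plurisubharmonicity of $r^2$ is the missing idea that collapses all of this into a three-line maximum-principle argument.
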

\begin{proof}
By the choice of $r_i$ as in \eqref{energy_scale} and the $\epsilon$-regularity theorem of \cite{TV},
there exists a constant $C>0$ independent of $i$ when $i$ is sufficiently large, such that
$\|Rm(y)\|_{g'_i}<C\cdot r^{-2}$ for $y\in Y_i\setminus B_1(y_i)$.
Then for $i$ sufficiently large, there exists a radius $R>1$, such that on $Y_i\setminus B_R(y_i)$,
$r^2$ is a plurisubharmonic function.
If Lemma \ref{scale_bound} is false, then there exists a holomorphic curve $E$ that intersects with 
$Y_i\setminus B_R(y_i)$ non-trivially for infinitely many $i$.
Let $p_i$ be the point in $E$ where $r^2$ achieves its maximum value.
Since $r^2$ is a plurisubharmonic function, its restriction on $E$ is a subharmonic function. By the maximum principle,
$r^2$ is constant on $E\cap Y_i\setminus B_R(y_i)$, which contradicts with the fact that $E \not\subset Y_i\setminus B_R(y_i)$.
Then we can set $R_{en} = R$, and the lemma is proved. 
\end{proof}

We next need a more precise estimate connecting the bubbles in the ``energy scale'' to the birational structure. 
Before we state and prove this, we next summarize some results in \cite{Lempert91},\cite{Lempert94} with mild modifications under our setting which are the crucial ingredient for this step.

\subsection{Summary of Lempert's results}

Let $(S_1/\Gamma',I)$ be the unit sphere centered at $\{0\}$ in $\RR^4/\Gamma'$ associated with a CR-structure $I$,
where $\Gamma'$ is a finite subgroup of $U(2)$ with no complex reflection.
We have the lifting of the CR-structure in the universal cover $S_1\subset \RR^4$ still denoted as $I$.
Assume $(S_1,I)$ is embeddable, i.e., there exists a diffeomorphism compatible with the CR-structure $I$,
that embeds $S_1$ into $\CC^m$ for some integer $m$.
Let $(S_1,I_{std})$ be the CR-structure induced from the standard complex structure in $\CC^2$.
Denote $(W,J_{std})$ as the analytic compactification of $\CC^2\setminus B_1(0)$ constructed by attaching a divisor $D\simeq \PP^1$
analytically to its end, with $J_{std}$ the analytic extension of the complex structure $J_{Euc}$ on $\CC^2$. 
Then $(W,J_{std})$ is a compact strictly pseudoconcave manifold.
\subsubsection*{\em{L.1.}}
There exist $\epsilon_1>0$, a positive integer $k$, such that if
$\|I - I_{std}\|_{C^k(S_1)} < \epsilon_1$, there exist $\epsilon_2 = \epsilon_2(\epsilon_1 \,|\, k)$, $0<k'<k$,
a complex structure $J$ on $W$ such that
$\|J-J_{std}\|_{C^{k'}(W)} < \epsilon_2$, $J|_{S_1} = I$, and 
$D$ is also holomorphic with respect to $J$.
The norm $C^{k'}(W)$ is defined by using the restriction of Fubini-Study metric on $W$.

Since $J$ is a small perturbation of $J_{std}$, by formula (4.6) in \cite{HanViaclovsky},
$J = E_{J_{std}}(\phi)$, where $\phi$ is a section of $\Lambda^{0,1}\otimes T^{1,0}$ with a small norm.
Since $(W,J_{std}), (S_1,I)$ are $\Gamma'$-equivariant, 
we can have $J$ to be $\Gamma'$-equivariant by averaging $\phi\in \Gamma(W, \Lambda^{0,1}\otimes T^{1,0})$ with the $\Gamma'$-action.
\subsubsection*{\em{L.2.}}
The divisor $D$ is associated with a holomorphic line bundle $L$ on $(W,J_{std})$.
There exists a basis $s_0,s_1,s_2$ of $H^0(W,L)$, where $s_0|_D = 0$ is the defining section of $D$.
When $\epsilon_1$ is small enough, the divisor $D$ also induces a line bundle $L'$ on $(W, J)$, 
which is holomorphic with respect to the complex structure $J$. There exists a smooth bundle isomorphism
$\Pi: L\rightarrow L'$, where $\Pi|_D = Id$.
Since $J_{std},J$ are $\Gamma'$-equivariant, we can require $\Pi$ to be $\Gamma'$-equivariant,
i.e., for any $\gamma\in\Gamma'$, $\gamma^*\circ \Pi = \Pi \circ \gamma^*$.
(This is because, we can choose a set of open charts $\{U_j\}_{1\leq j\leq r}$, such that 
$U_{j_1}\neq U_{j_2}$ if $j_1\neq j_2$, and $\{\sigma(U_j): \sigma\in \Gamma', 1\leq j\leq r\}$ is a covering of $W$. 
Applying the construction of $\Pi$ in \cite{Lempert94}[Lemma 4.2] on each $U_j$,
and apply the $\Gamma$-action to construct $\Pi$ on other charts of the same orbit.)
There exist sections $\sigma_0,\sigma_1,\sigma_2\in H^0(W,L')$,
such that for each $j=0,1,2$,
$\|\Pi^{-1}(\sigma_j) - s_j\|_{C^{k''}(W)}<\epsilon_3$ for some $\epsilon_3 = \epsilon_3(\epsilon \,|\, k)$, $0<k''<k$.
\subsubsection*{\em{L.3.}}
Denote $\sigma^{(1)}_j$ as the first-order truncation of $\sigma_j$ over $D$ 
(which is the projection of $\sigma_j$ to the normal bundle of $D$).
We have $\sigma^{(1)}_j = s_j^{(1)} \in H^0(D,L)$.
Each $\sigma_j$ is determined by $\sigma^{(1)}_j$.
Specifically, since $s_0$ is $\Gamma'$-invariant and $\Pi$ is $\Gamma'$-equivariant, $\sigma_0$ is $\Gamma'$-invariant.
Let
\begin{align}
s_0^{d_{0}}P_0(s_1,s_2),\ldots,s_0^{d_{N}}P_N(s_1,s_2)
\end{align} 
be generators of $\Gamma'$-invariant elements in $H^0(W,L^k)$, where each $P_j(a,b)$ is a homogeneous polynomial of degree
$k-d_j$, and specifically, $s_0^{d_0}P_0(s_1,s_2) = s_0^{k}$.
Since $s_0$ is $\Gamma'$-invariant,
each $P_j(s_1,s_2)$ is also $\Gamma'$-invariant.
As
$P_j({\sigma_1^{(1)}},{\sigma_2^{(1)}}) = 
P_j({s_1^{(1)}},{s_2^{(1)}})$ on $D$,
and 
$P_j({\sigma_1},{\sigma_2})$ is determined by
$P_j({\sigma_1^{(1)}},{\sigma_2^{(1)}})$,
then $P_j({\sigma_1},{\sigma_2})$ is $\Gamma'$-invariant. 
As a result,
${\sigma_0}^{d_{j}}P_j({\sigma_1},{\sigma_2})\in H^0(W,{L'}^k)$ is also $\Gamma'$-invariant.
\subsubsection*{\em{L.4.}}
Let $(v^1,v^2) = (\frac{\sigma_1}{\sigma_0}, \frac{\sigma_2}{\sigma_0})$.
Then $v = (v^1,v^2)$ embeds $(W\setminus D,J)$ into $\CC^2$, and the image of $S_1$ is close to the unit sphere
centered at $\{0\}$ in $\CC^2$.
For each $1\leq j\leq N$, Let
\begin{align}
u^j = \frac{{\sigma_0}^{d_{j}}P_j({\sigma_1},{\sigma_2})}{\sigma_0^k} = P_j(v^1,v^2)
\end{align}
Then $u = (u^1,\ldots,u^N)$ embeds $\mathcal{N} = (W\setminus D)/\Gamma'$ into $Z\subset \CC^N$,
under which $\mathcal{N}$ is biholomorphic to an open subset of the cone
$Z\subset \CC^N$, where $Z\simeq \CC^2/\Gamma'$, $\{0\}\in Z$ is the quotient singularity of the cone.

\subsection{The first bubble $Y_\infty$ is a resolution}
\label{first_bubble}

From now on, we will choose $r_i$ as the ``energy scale'' as defined in \eqref{energy_scale}.
Up to a subsequence
$(Y_i,g'_i,y_i)$ converges to $(Y_\infty,g'_\infty,y_\infty)$ in the pointed Cheeger-Gromov sense,
where $Y_\infty$ is an ALE SFK orbifold with an end asymptotic to $\RR^4/\Gamma'$. 
Without loss of generality, we can assume $y_\infty$ is the only energy concentration point in $Y_\infty$. 
By Lemma \ref{scale_bound} above, there exists a constant $R_{en}>0$ independent of $i$,
such that each holomorphic curve in $Y_i$ is contained in the geodesic ball
$B_{R_{en}}(y_i)$. Without loss of generality, we can assume $R_{en} = 1$.
\begin{lemma}
\label{Lempert_method}
$Y_\infty$ is birationally equivalent to $\CC^2/\Gamma'$, where $\CC^2/\Gamma'$ is the corresponding
quotient singularity at $x_\infty\in X_\infty$.  Furthermore, there are no smooth energy concentration points in $X_{\infty}$. 
\end{lemma}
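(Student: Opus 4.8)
The plan is to identify the Stein reduction of the first bubble $Y_\infty$ with the cone $\CC^2/\Gamma'$ by feeding the CR structure of a large sphere in $Y_\infty$ into Lempert's filling and embedding results L.1--L.4, and then to deduce the statement about smooth energy concentration points from the minimality of $X$, exactly along the lines of Lemma~\ref{no_smooth_energy_concentration}. Throughout I work at the \emph{energy scale}, so that \eqref{energy_scale} holds and Lemma~\ref{scale_bound} is available.

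First I would record the relevant geometry. After rescaling so that $R_{en}=1$ in Lemma~\ref{scale_bound}, every holomorphic curve of $Y_i$ lies in $B_1(y_i)$ for $i$ large, hence in the pointed Cheeger--Gromov limit every compact curve of $Y_\infty$ — in particular the exceptional set of its minimal resolution — lies in $B_1(y_\infty)$. Since $(Y_\infty,g'_\infty,J'_\infty,y_\infty)$ is an ALE SFK orbifold asymptotic to $\RR^4/\Gamma'$, with the \emph{same} group $\Gamma'$ as the orbifold group of $x_\infty\in X_\infty$ (this is forced by the uniform ALE asymptotic rate in the convergence $(Y_i,g'_i,y_i)\to(Y_\infty,g'_\infty,y_\infty)$), the complex structure $J'_\infty$ decays to $J_{Euc}$ along the end. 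Therefore I can fix a single radius $\rho>1$, depending only on the decay order of $Y_\infty$ and on Lempert's threshold $\epsilon_1$ in L.1, such that $S_\rho:=\partial B_\rho(y_\infty)$ is disjoint from all compact curves and, after identifying $S_\rho$ with $S^3/\Gamma'$ via an ALE coordinate and rescaling $\rho\mapsto 1$, the induced CR structure $I_\infty$ satisfies $\|I_\infty-I_{std}\|_{C^k(S^3/\Gamma')}<\epsilon_1$.

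Next I would run the Lempert machinery. The space $Y_\infty$ is one-convex (extend the pullback of $r_{Euc}^2$ to a smooth exhaustion which is strictly plurisubharmonic outside a compact set, as for any ALE K\"ahler surface), so it has a Remmert reduction $\sigma\colon Y_\infty\to Y_\infty^{\mathrm{St}}$ onto a normal Stein space contracting precisely the curves in $B_1(y_\infty)$; by Narasimhan's theorem \cite{Narasimhan1962} (used the same way in Lemma~\ref{no_smooth_energy_concentration}) $Y_\infty^{\mathrm{St}}$ embeds in some $\CC^M$, so the CR structure $(S^3/\Gamma',I_\infty)$ bounding its strictly pseudoconcave end is CR-embeddable. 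By L.1 there is a $\Gamma'$-equivariant complex structure $J$ on Lempert's compact pseudoconcave model $W$ with $J|_{S_1}=I_\infty$, and by the uniqueness of strictly pseudoconcave fillings of a prescribed CR boundary the end of $Y_\infty^{\mathrm{St}}$ is biholomorphic to $\mathcal N=(W\setminus D)/\Gamma'$. By L.2--L.4 the $\Gamma'$-invariant functions $u^1,\dots,u^N$ satisfy fixed polynomial relations and embed $\mathcal N$ as an open neighborhood of infinity in $Z\cong\CC^2/\Gamma'$; transporting them through $\sigma$ and extending across the compact exceptional set by one-convexity of $Y_\infty$, they become global holomorphic functions realizing $Y_\infty^{\mathrm{St}}\cong Z\cong \CC^2/\Gamma'$. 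Hence $Y_\infty$ is a (possibly non-minimal) resolution of $\CC^2/\Gamma'$, which is the first assertion. The main obstacle is precisely here: verifying that the CR structure on $S_\rho$ genuinely lands in the perturbative regime of L.1 — this rests on the uniform ALE asymptotic rate of Proposition~\ref{uniform_rate} together with Lemma~\ref{scale_bound} keeping the exceptional set at bounded scale — and establishing uniqueness of the pseudoconcave filling so that the Lempert functions can be transported and then globalized. One may instead carry this out on $Y_i$ for finite $i$ (one-convex as an exhaustible open subset of the one-convex $X$) and pass to the limit via the elliptic-theory arguments of Proposition~\ref{uniform_rate}, obtaining generators of $\mathcal O(Y_\infty^{\mathrm{St}})$ as strong weighted limits of the rescaled $u^j_{(i)}$; that bookkeeping is routine.

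For the second assertion, suppose an energy concentration point $p$ (in $X_\infty$, or anywhere in the bubble tree over it) were a smooth point. Then the bubble forming at $p$ is an AE SFK orbifold, and by \eqref{b2sum} and Lemma~\ref{tblemma} it has $b_2>0$, since every bubble contains a point of nonzero curvature. Arguing as in Lemma~\ref{no_smooth_energy_concentration}: a suitably chosen strictly pseudoconvex neighborhood $B(i,\delta)\subset X$ of the corresponding high-curvature region has boundary a small CR-perturbation of $S^3$, which by \cite[Section~5]{Lempert94} bounds a smooth Stein domain obtained from the $4$-ball by iterated blow-ups; since $b_2>0$ this domain, hence $X$, contains a $(-1)$-curve, contradicting the minimality of $X$. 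Therefore there are no smooth energy concentration points, which completes the proof of Lemma~\ref{Lempert_method}.
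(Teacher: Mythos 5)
Your overall strategy is close to the paper's — both arguments ultimately rest on one-convexity, the embeddability of a CR sphere near infinity, and Lempert's results L.1--L.4 — but the route you take differs in two substantive places, and one of them leaves a genuine gap.

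The first difference is a genuine (and arguably cleaner) shortcut. To get CR-embeddability of a large sphere, you argue directly on the limit: $Y_\infty$ is one-convex, its Remmert reduction is a normal Stein space which embeds in some $\CC^M$ by Narasimhan, hence the CR boundary is embeddable. The paper instead establishes embeddability \emph{at the level of the sequence} $Y_i$: using the bimeromorphism $\Phi$ from Theorem~\ref{CGCB} it shows $\overline{V_{3R}(v_i)}$ embeds into $\CC^2$, applies L.1--L.4 to each $(W_1,J_i)$ built from $J''_i$, gets functions $v^j_i$, and then shows $v^j_i\to v^j_\infty$ in weighted H\"older norm, from which embeddability of $(S_1,J''_\infty)$ follows. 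Your shortcut is valid for Lemma~\ref{Lempert_method} itself; however it does not produce the maps $\pi_i\colon M_i\to Z$ and $\pi_\infty\colon Y_\infty\to Z$ and the convergent sequence $u^j_i\to u^j_\infty$, which the paper needs immediately afterwards in Subsection~\ref{deeper_bubble} to iterate down the bubble tree. So the sequence-level construction is not idle machinery; you would need to redo it anyway.

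The second difference is where the gap lies. You invoke L.1 to produce a $\Gamma'$-equivariant $(W,J)$ filling the CR structure, and then assert ``by the uniqueness of strictly pseudoconcave fillings of a prescribed CR boundary the end of $Y_\infty^{\mathrm{St}}$ is biholomorphic to $\mathcal N=(W\setminus D)/\Gamma'$.'' This uniqueness is neither proved nor cited, and in fact the rest of your argument (transporting the $u^j$ through the Remmert reduction) depends on it. Uniqueness of pseudoconcave fillings is not a triviality: a CR function on a strictly pseudoconcave boundary has a unique extension to the pseudoconvex side but not to the pseudoconcave side, and the Lempert sections $\sigma_j$ on two fillings are only pinned down by their first-order truncation along $D$, not by their boundary values on $S_1$. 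The paper sidesteps this entirely: since $Y_\infty$ is ALE of order $O(r^{-\mu})$ with $\mu>1$, the universal cover of its end compactifies analytically to a pseudoconcave surface $W_{1,\infty}$ with a rational curve $D$ at infinity and with $\|J''_\infty-J_{std}\|$ small, and one then applies L.2 and L.4 \emph{directly to} $(W_{1,\infty},J''_\infty)$ — no L.1, no identification with a separately-constructed filling, and hence no uniqueness needed. If you replace ``L.1 plus uniqueness plus transport'' with ``compactify the end of $Y_\infty$ itself and apply L.2, L.4 there,'' your argument closes; as written it does not. Your treatment of the second assertion (ruling out smooth concentration points via the minimality of $X$, following Lemma~\ref{no_smooth_energy_concentration}) is fine, and is essentially the same idea as the paper's reference to Corollary~\ref{nosmooth}, since both hinge on the appearance of a $(-1)$-curve in $X$.
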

\begin{proof}
In the following, the Cheeger-Gromov convergence will always be understood up to picking a subsequence.
Consider the sequence $(Y_i,g'_i,y_i)$ that converges to $(Y_\infty,g'_\infty,y_\infty)$ in the pointed
Cheeger-Gromov sense. 
Denote $A_{a,b}(y_i)$ as the closed annulus in $Y_i$ between the geodesic balls $B_{a}(y_i), B_{b}(y_i)$, $a<b$.
Denote $A_{a,b}(0)$ as the annulus in $\CC^2/\Gamma'$ centered at the origin between the radius $a < b$.
In the next several paragraphs, we will follow the idea of Lempert's method in \cite{Lempert94} to show
that when the radius is large,
the annulus is very close to the standard annulus (up to a diffeomorphism that is close to the identity map).

Let $R>1$ be fixed with its value to be determined later.
By Lemma \ref{scale_bound}, all holomorphic curves that degenerate at $Y_\infty$ are contained in $B_R(y_i)$
for each $i$ sufficiently large.
Denote $V_{3R}(v_i)$ as the image of $B_{3R}(y_i)$ after contracting the exceptional divisors in $B_{3R}(y_i)$
to the point $v_i$.
Let $\overline{V_{3R}(v_i)}$ be the orbifold universal cover of $V_{3R}(v_i)$ with a single orbifold point $v_i$, 
which has a strictly pseudoconvex boundary.
$\overline{V_{3R}(v_i)}$ can be embedded into $\CC^2$. The reason is, 
for $i$ sufficiently large, the bimeromorphism $\Phi$ in \eqref{bimeromorphism} maps $B_\delta(x_i)$ to a subdomain of
$B_{2\delta}(x_\infty)$. By possibly shrinking $\delta$ even smaller, we have 
$B_{2\delta}(x_\infty)$ is biholomorphic to a strictly pseudoconvex domain in $\CC^2/\Gamma'$.
As a result of Theorem \ref{CGCB}, $V_{3R}(v_i)$ can be mapped into $B_{2\delta}(x_\infty)$, henceforth 
can be mapped into $\CC^2/\Gamma'$. Then $\overline{V_{3R}(v_i)}$ can be embedded into $\CC^2$.
The embeddability implies that there exists a pair of holomorphic coordinate functions, which determines
the complex structure of $\overline{V_{3R}(v_i)}$ as $J'_i$.

On the limit $(Y_\infty,g'_\infty,y_\infty)$, there is an ALE coordinate
\begin{align*}
\Psi: Y_\infty\setminus B_{\frac{1}{16}R}(y_\infty)\rightarrow (\RR^4\setminus K)/\Gamma'
\end{align*}
where $K$ is a compact subset contained in $B_{\frac{1}{8}R}(0)$ with respect to $g_{Euc}$.
For any $\delta>0$, we also have a diffeomorphism $\psi'_i: B_{4R}(y_\infty)\setminus B_{\delta}(y_\infty)\rightarrow
B_{5R}(y_i)$, such that ${\psi_i'}^*g'_i$ converges to $g'_\infty$, ${\psi_i'}^*J'_i$ converges to $J'_\infty$
on $B_{4R}(y_\infty)\setminus B_\delta(y_\infty)$.
In order to simplify our symbols,
we will 
use $J'_i, J'_\infty$ to denote complex structures $({\psi_i'}\circ \Psi^{-1})^*J'_i$ and ${\Psi}_*J'_\infty$ respectively
on $A_{\frac{1}{2}R, 3R}(0)$ and also on its universal cover $\overline{A_{\frac{1}{2}R, 3R}(0)}\subset \RR^4$;
denote $S_r$ as the boundary of $\overline{B_r(0)}$.
Our goal is to find a diffeomorphism close to the identity map  that perturbs the complex structure $J'_i$ to the standard
one on $\overline{A_{R,2R}(0)}$. Henceforth, a sequence of the ``perturbed'' coordinate functions will converge as holomorphic
functions , which implies that $Y_\infty$ is a resolution.

We will define the normalized annulus
\begin{align}
(A_{a,b}(0), g''_i, J''_i) = (A_{R\cdot a,R\cdot b}(0), \frac{1}{R^2}\cdot g'_i, J'_i)
\end{align}
and similar for $(A_{a,b}(0),g''_\infty,J''_\infty)$.
We can choose $R$ to be large enough, such that for
any $k>0$ and any sufficiently small $\epsilon(k)>0$, when $i$ is sufficiently large, 
$\|J''_\infty - J_{Euc}\|_{C^{k}(A_{\frac{1}{2},3}(0))}<\frac{\epsilon(k)}{2}$,
$\|J''_i - J''_\infty\|_{C^{k}(A_{\frac{1}{2},3}(0))}<\frac{\epsilon(k)}{2}$,
and consequently
\begin{align}
\label{estimate_3}
\|J''_i - J_{Euc}\|_{C^{k}(A_{\frac{1}{2},3}(0))}<\epsilon(k)
\end{align}
Next we will apply Lempert's results L1-L4 on $\overline{A_{1,2}(0)}$.
We will consider $(\overline{A_{1,2}(0)},J_{Euc})$ as a standard annulus domain in $\CC^2$.
In the following paragraphs, each norm is defined based on the standard metrics, i.e.,
either the Euclidean metric or the Fubini-Study metric on the ``compactification''. 

We can compactify $\CC^2$ to $\PP^2$ by adding a divisor $D = \PP^1$ at the infinity analytically.
The standard complex structure $J_{Euc}$ on $\CC^2$ extends to the standard complex structure on $\PP^2$
which is denoted by $J_{std}$, and $\CC^2$ is embedded into $\PP^2$ by
$(z^1,z^2)\rightarrow (z^1,z^2,1)$.
Denote $W_r = \PP^2\setminus B_{r}(0)$.
By choosing $R$ to be large enough, we can assume $\epsilon(k)<\epsilon_1$,
where $\epsilon_1,k$ as in L.1.
Then for $i$ sufficiently large,
$\|J''_i-J_{Euc}\|_{C^{k}(\overline{A_{1,2}(0)})}< \epsilon_1$. 
By L.1, in the pseudoconcave manifold $W_2$,
there exists a $\Gamma'$-equivariant complex structure $J'''_i$ on $W_2$, such that 
$\|J'''_i-J_{std}\|_{C^{k'}(W_2)}<\epsilon_2$, $D$ is holomorphic with respect to $J'''_i$
and $J'''_i = J''_i$ as CR-structures on the boundary $S_2$.
Since $J''_i$ and $J'''_i$ are compatible on $S_2$, there exists a complex structure, denoted as $J_i$, on 
the pseudoconcave manifold $W_{1}$, such that,
$J_i = J'''_i$ on $W_2$, $J_i = J''_i$ on $\overline{A_{1,2}(0)}$, 
and $J_i$ is close to $J_{std}$ on $W_{1}$ under $C^{k}$-norm, and 
is $\Gamma'$-equivariant.

By L.4, we have $v_i = (v^1_i,v^2_i)$ on $(W_1\setminus D,J_i)$. Restrict $v_i$ on 
$\overline{A_{1,2}(0)}$, then we have a map
\begin{align}
{v}_i: \overline{A_{1,2}(0)} \rightarrow \RR^4
\end{align}
which is a diffeomorphism into its image, and
where ${v}^1_i,{v}^2_i$ are holomorphic functions with respect to $J''_i$, and there exists a small number $\lambda$ that depends on $\epsilon(k)$,
such that 
\begin{align}
\label{estimate_1}
\|v^j_i - z^j\|_{C^{k''}(\overline{A_{1,2}(0)})} < \lambda
\end{align}
Also by L.4, there exists a diffeomorphism defined by $u_i = (u^1_i,\ldots,u^N_i)$
\begin{align}
u_i:  \mathcal{N}  = (W_1\setminus D)/\Gamma'\rightarrow \CC^N
\end{align}
where $u^1_i,\ldots,u^N_i$ are holomorphic functions on $(\mathcal{N}, J_i)$, and 
there exists a small number $\lambda'>0$ that depends on $\epsilon(k)$, 
such that
\begin{align}
\label{estimate_2}
\|u^j_i - P_j(z^1,z^2)\|_{C^{k''}(\overline{A_{1,2}(0)})} < \lambda'.
\end{align}
The geodesic ball $B_{R}(y_i)$ can be attached to $\mathcal{N}$ analytically along the boundary $S_1/\Gamma'$.
Denote the glued manifold as $M_i$.
Since $M_i$ is one-convex,
each holomorphic function $u_i^j$ can be extended to a holomorphic function on $M_i$, which is still denoted as $u_i^j$.
Then $u_i = (u^1_i,\ldots,u^N_i)$ maps $M_i$ onto $Z\subset \CC^N$. 
Each holomorphic curve in $M_i$ is mapped to $\{0\}\in \CC^N$ for the reason given below.
Restrict $u_i$ on $\mathcal{N}$, it can be lifted up to a map on the universal cover
$u_i: W_1\setminus D\rightarrow \CC^N$,
which can be decomposed as
\begin{align} 
W_1\setminus D\xrightarrow{(v_i^1,v_i^2)}\CC^2 \xrightarrow{(P_1(z^1,z^2),\ldots,P_N(z^1,z^2)))} \CC^N
\end{align} 
where $P_1,\ldots,P_N$ are homogeneous polynomials as in L.3 and
$\{0\}\in \CC^2$ is mapped to the vertex of the cone by the latter map. Then the singularity point of 
$u_i(M_i)$ in $\CC^N$ is $\{0\}$,
and holomorphic curves are mapped to $\{0\}\in \CC^N$ by $u_i$.

When $i\to \infty$, up to a subsequence, $v^j_i$ $(j=1,2)$ converges to $v^j_\infty$,
and 
\begin{align}
v_\infty = (v^1_\infty,v^2_\infty): \overline{A_{1,2}(0)} \rightarrow \CC^2
\end{align}
is an embedding, and is holomorphic with respect to $J''_\infty$.
This implies that the inner boundary $S_1$ with CR-structure induced by $J''_\infty$ is embeddable.

Now we will construct holomorphic coordinate functions on $\overline{A_{1,\infty}(0)}$(as the universal cover of the ALE end of the limit space). 
Since $(\overline{A_{1,\infty}(0)},g''_\infty)$ 
has an ALE asymptotic rate of $O(r^{-\mu})$ for some $\mu>1$,
we can compactify $\overline{A_{1,\infty}(0)}$ analytically to a strictly pseudoconcave space $W_{1,\infty}$
by attaching a divisor $D\simeq \CC P^1$ to its end, and extend $J''_\infty$ to a complex structure on $W_{1,\infty}$
such that $D$ is holomorphic with respect to $J''_\infty$. 
By choosing the scaling factor $R$ sufficiently large,
we have $\|J''_\infty-J_{std}\|_{C^{k'}(W_{1,\infty})}<\epsilon_2$.
Since $(S_1,J''_\infty)$ (as the boundary of $W_{1,\infty}$) is embeddable as shown above,
then by applying Lempert's result L.2, L.4, there exists a pair of holomorphic functions $(w^1_\infty,w^2_\infty)$
on $A_{1,\infty}\simeq W_{1,\infty}\setminus D$,
which induces an embedding
\begin{align}
w_\infty = (w^1_\infty,w^2_\infty): A_{1,\infty}\rightarrow \CC^2
\end{align}
Then $(w^1_\infty,w^2_\infty)$ is a pair of coordinate function on the universal cover of the end of the limit space.
Thus $Y_\infty$ is birationally equivalent to $\CC^2/\Gamma'$.

 Smooth energy concentration points can be ruled out using the same argument in the proof of Corollary \ref{nosmooth}.
\end{proof}

\subsection{Each deeper bubble is a resolution}
\label{deeper_bubble}
We are going to apply an induction argument to show that each deeper bubble is a resolution to the corresponding
singularity in the previous bubble.
By Lemma \ref{Lempert_method}, the geodesic ball $B_R(y_i)$ is birational to an open neighborhood of $y_\infty\in Y_\infty$.
As in the proof of Lemma \ref{Lempert_method}, $A_{1,2}(0)$ (associated with the complex structure
$(\psi_i'\circ\Psi^{-1})^*J'_i$) is a subset of $\mathcal{N}$. By L.4, 
$u_i$ maps $\mathcal{N}$ to a subset of the cone $Z\subset \CC^N$ .
Recall that in the proof of Lemma \ref{Lempert_method}, we can obtain a manifold $M_i$ by attaching $B_{R}(y_i)$
to $\mathcal{N}$ analytically. Each holomorphic function $u^j_i$ extends over $M_i$ by one-convex property.
Then the map $u_i: \mathcal{N}\rightarrow Z$ can be extended to:
\begin{align}
\pi_{i}: M_i \rightarrow Z 
\end{align}
Since $u^j_i$ converges and extends to a holomorphic function $u^j_\infty$ on $Y_\infty$ for each $1\leq j\leq N$,
there exists a map:
\begin{align}
\pi_{\infty}: Y_\infty\xrightarrow{(u^1_\infty,\ldots,u^N_\infty)} Z 
\end{align}
where $\pi_{i},\pi_{\infty}$ are surjective holomorphic maps that contract the holomorphic curves.
Let $\widetilde{Y_\infty}$ be the minimal resolution of $Y_\infty$, with the projection map
\begin{align} 
\pi: \widetilde{Y_\infty}\rightarrow Y_\infty
\end{align} 
Following the same argument that proves \eqref{fcond},
for each $i$, there exists a holomorphic map
\begin{align} 
f_i: \widetilde{Y_\infty}\rightarrow M_i
\end{align} 
which is surjective to its image, and such that 
$\pi_i\circ f_i \circ (\pi\circ\pi_\infty)^{-1} = Id$
on the subset of $Z$ where it is defined.
Define
\begin{align}
\tau_i = \pi\circ f_i^{-1} \circ \psi_i': A_{\delta, R}(y_\infty) \rightarrow A_{\delta,2R}(y_\infty)
\end{align}
By a similar procedure as we did in the proof of Proposition \ref{CGCB},
we can show that $\tau_i$ converges to the identity map from $A_{\delta,R}$ to itself.
Henceforth, we can show that there exists no $(-1)$-curve in $\widetilde{B_R(y_\infty)}$,
and there exists a surjective bimeromophism from $B_R(y_i)$ to its image in $B_{2R}(y_\infty)$.
Furthermore, this implies that, for a sufficiently small $\delta>0$,
$B_\delta(y_\infty)$ is isomorphic to a neighborhood of the singularity in $\CC^2/\Gamma''$, where $\CC^2/\Gamma''$ is type of 
the quotient singularity at $y_\infty$.
Then we can continue our iteration step, and analyze the next bubble as we did for the first one.
Since for each step, the energy $\|Rm\|^2_{L^2}$ loses a definite value which is $\geq \frac{\epsilon_0}{2}$,
where $\epsilon_0$ is the energy threshold, the iteration could last for at most finite steps.
By doing the induction after finite steps, we can show that each bubble is a resolution to the corresponding
singularity in the previous bubble. Finally, exactly as in the previous steps, there are no smooth energy concentration points at any stage in the bubble tree.

\subsection{Completion of proof of Theorem \ref{compactness}: ruling out bubbling}
\label{cpsubsec}
Since each bubble is a resolution, the bubble tree is diffeomorphic to a sequence of resolutions.
A priori, the bubble tree could have more than one branch. But without the loss of generality, we can assume that
the bubble tree has only one branch, and is diffeomorphic to $Y_1 \# Y_2 \#\ldots \# Y_r$, where
$Y_1$ is the first bubble, $Y_r$ is the deepest bubble, each bubble $Y_{j+1}$ is a resolution of the corresponding singularity
in $Y_j$. 
Since $Y_r$ is smooth and is a resolution, and $b_2(Y_r)$ is nontrivial, there exists a holomorphic curve
$E^r\subset Y_r$. By Laufer's Theorem 2.1, \cite{Laufer1979}, $E^r$ is homologous to a positive cycle $E^{r-1}$ in $\widetilde{Y_{r-1}}$.
Since $\widetilde{Y_{r-1}}$ is a resolution of the singularity in $Y_{r-2}$, $E^{r-1}$ is again homologous to a 
positive cycle $E^{r-2}$ in $\widetilde{Y_{r-2}}$. By induction, finally, $E^r$ is homologous to a nontrivial
positive cycle $E^1$ in $\widetilde{Y_1}$.
Then there exists a rational combination $[E^1] = a_1 [E'_{1}]+\ldots+ a_m [E'_{m}]$ that converges to $[E^1]$, where 
$a_j$ are non-negative rational numbers with at least one larger than $0$,
$E'_{1},\ldots,E'_{m}\subset \Phi^{-1}(x_\infty)$. 
However, by the assumption, $\int_{E^1} \omega_i^2 > C >0$. This implies a contradiction.

Recalling Corollary \ref{nosmooth}, there can be no energy concentration points in the limit, so $X_{\infty}$ must be a smooth manifold, and there exist diffeomorphisms 
\begin{align}
\psi_i: X_\infty\rightarrow X_i
\end{align}
such that $\psi_i^*g_i\xrightarrow{C^{k,\alpha}_{-\mu}} g_\infty$, $\psi_i^*J \xrightarrow{C^{k,\alpha}_{-\mu}} J_\infty$,
where $-2<-\mu<-1$, $k$ is any non-negative integer, $0<\alpha<1$.
Since $X_\infty$ is biholomorphic to $X$, the gauging map $\Phi$ in Theorem \ref{CGCB} can be considered as
an automorphism of $X$, which preserves the rate of ALE coordinate.
By the proof of Lemma \ref{uniform_diffeom}, 
away from a compact subset of $X$, the diffeomorphism $\psi_i$ is constructed by using harmonic coordinates,
and the convergence in Theorem \ref{CGCB} can be improved to
$\|\psi_i - Id\|_{C^{k+1,\alpha}_{-\mu+1}} < \epsilon(i\,|\, k)$.
Then $g_i$ converges to $g_\infty$ in $C^{k,\alpha}_{-\mu}(g_\infty)$-norm.
Without the loss of generality, we can choose $-\mu<\delta_0$.
Then $g_\infty$ is also an ALE metric with respect to the fixed ALE coordinate $\Psi$ of rate $O(r^{\delta_0})$. By a standard bootstrapping argument, $\omega_{\infty}
\in \mathcal{P}(X,J,\omega_0, \delta_0)$, 
and this finishes the proof of Theorem \ref{compactness}.

\section{Existence results}
\label{non-Artin}
In this section, we prove Corollary~\ref{corexist},
Theorem~\ref{background_metric},  
Theorem~\ref{classification}, and Corollary \ref{cyclicthm1} 

\subsection{Proof of Corollary~\ref{corexist}}
For any K\"ahler class $\kappa\in \KK(J)$, let 
$g_{b,1} \in \mathcal{P}(J)$ with $[g_{b,1}] = \kappa$. 
Consider the family
of background ALE K\"ahler metrics $g_{b,t} = (1-t)g_0+tg_{b,1}$ for $t\in [0,1]$.
We want to construct a family of ALE SFK metrics $g_t$ for $t\in[0,1]$, 
and $[g_t] = [g_{b,t}]$,
with $g_t - g_0 \in C^{k,\alpha}_{\delta}(g_0)$. 
Let $S\subset [0,1]$ be the subset where such ALE SFK metric exists. 
By the openness result in \cite{HanViaclovsky}, $S$ is an open subset. By Theorem~\ref{compactness}, $S$ is closed, so $S = [0,1]$ and 
the desired ALE SFK metric exists, which completes the proof. 

\subsection{Theorem~\ref{background_metric}: construction of background ALE K\"ahler metrics}
\label{setup}
Let $(X,J_1)$ be a complex surface, where $J_1\in \JJ^M_k$ ($k>0$).  Our goal is to 
construct an ALE K\"ahler metric $g_1$ on $(X,J_1)$.
Outside of a compact subset $K\subset X$, $X\setminus K$ has a universal cover $\overline{X\setminus K}$,
which can be compactified analytically to an open surface $S$ by attaching a divisor $D\simeq \PP^1$ to its end.
By Pinkham \cite{Pinkham78},
the surface $(S,J_1)$ is a deformation of $(S,J_{std})$ (which is a subset in $\PP^2$),
and the deformation fixes the divisor $D$.
The $k$th-order formal infinitesimal neighborhood of $D$ is defined as $\oo_S^{(k)}=\oo_S/\mathcal{I}^k$, where $\mathcal{I}$
is the ideal sheaf of $D$.
By \cite{Pinkham78}, we know that
$D$ has the same first-order infinitesimal neighborhood in $(S,J_1)$ and $(S,J_{std})$,
i.e., $\oo_S^{(1)}$ is identical with respect to different complex structures.
(Indeed, $\oo_S^{(3)}$ is identical with respect to different complex structures.)
The divisor $D$ is associated with a line bundle $L$ over $(S,J_{std})$,
and a line bundle $L'$ over $(S,J_1)$.
There exists a defining section of $D$ $\sigma_0 \in H^0(S, \oo(L'))$ with $\sigma_0|_D = 0$,
and smooth sections $\zeta_1,\zeta_2\in \Gamma(S,\mathcal{A}(L'))$, of which the restriction of
$\zeta_1,\zeta_2$ on $D$ are generators of $H^0(D,\oo(L))$.
We can use $(\sigma_0,\zeta_1,\zeta_2)$ to map $S$ into $\PP^2$,
and denote the pull-back of the complex structure on $\PP^2$ by $J_{std}$ on $S$.
Since $\bar\partial \zeta_{j} = O(|\sigma_0|)$ for $j=1,2$, where $\bar\partial$ is with respect to $J_1$,
this implies that
\begin{align}
\label{asymptotic_rate_J}
J_1 \sim J_{std} + O(|\sigma_0|)
\end{align}
The functions $\frac{\zeta_1}{\sigma_0},\frac{\zeta_2}{\sigma_0}$ are well-defined smooth functions on $S\setminus D$.
We use 
\begin{align}
x^1 = Re\Big(\frac{\zeta_1}{\sigma_0}\Big), 
x^2 = Im\Big(\frac{\zeta_1}{\sigma_0}\Big),
x^3 = Re\Big(\frac{\zeta_2}{\sigma_0}\Big),
x^4 = Im\Big(\frac{\zeta_2}{\sigma_0}\Big)
\end{align}
as coordinate functions of $\overline{X\setminus K}$.
Be aware that $(x^1+\sqrt{-1}x^2,x^3+\sqrt{-1}x^4)$ are holomorphic functions with respect to $J_{std}$.
Then
\begin{align}
\omega_{Euc} = \frac{\sqrt{-1}}{2} \partial_{std}\bar\partial_{std}
\Big(|\frac{\zeta_1}{\sigma_0}|^2+|\frac{\zeta_2}{\sigma_0}|^2 \Big)
\end{align}
defines a positive $(1,1)$-form on $(S\setminus D,J_{std})$, which is the K\"ahler form associated to the Euclidean metric
under the coordinate $(x^1,x^2,x^3,x^4)$.

Moreover by \eqref{asymptotic_rate_J},
\begin{align}
\bar\partial - \bar\partial_{std} = O(|x|^{-1})
\end{align}
Then
\begin{align}
\frac{\sqrt{-1}}{2}\partial\bar\partial \Big(|\frac{\zeta_1}{\sigma_0}|^2 + |\frac{\zeta_2}{\sigma_0}|^2\Big) = \omega_{Euc} + O(|x|^{-1})
\end{align}
By taking $|x|$ sufficiently large, we can assume 
$\frac{\sqrt{-1}}{2}\partial\bar\partial (|\frac{\zeta_1}{\sigma_0}|^2 + |\frac{\zeta_2}{\sigma_0}|^2) $
is positive definite, therefore a K\"ahler form.
Averaging with the $\Gamma'$-action, we can assume 
$|\frac{\zeta_1}{\sigma_0}|^2+|\frac{\zeta_2}{\sigma_0}|^2$ is $\Gamma'$-invariant,
and can be pushed down to $X\setminus K$.

After contracting all exceptional divisors on $X$, there exists a Stein space $X'$.
Without loss of generality, assume $p\in X'$ is the only singular point. 
We will also identify $X$ with $X'$ away from the
exceptional divisors and $p$.
Furthermore, there exists an integer $k'>0$, such that ${L'}^{k'}$ 
can be extended to a line bundle on the analytic compactification
$\hat{X}$ (which is an orbifold), $\oo({L'}^{k'})$ is globally generated, and there exists a
basis $s_0,\ldots,s_N\in H^0(\hat{X}, \oo({L'}^{k'}))$ which embeds $X'$ into $\CC^N$.
We have
\begin{align}
\varphi = (1 + |u^1|^2+\ldots+|u^N|^2)^\alpha,
\end{align}
where $u^j = s_j/s_0$, $0<\alpha<1$, and $\varphi$ is a strictly pluri-subharmonic function on $X'\setminus p$.

Let $K'\subset X$ be a compact subset and $K\subset K'$. 
Let $\chi$ be a smooth cutoff function defined on $X$, such that $\chi = 0$ on $K$, and $\chi = 1$ on $X\setminus K'$.
Define the $(1,1)$-form $\omega'_1$ as:
\begin{align}
\label{omega'_1}
\omega'_1 =  
A\cdot \sqrt{-1}\partial\bar\partial \varphi + \frac{\sqrt{-1}}{2}\partial\bar\partial\Big(\chi\cdot \Big(\Big|\frac{\zeta_1}{\sigma_0}\Big|^2
+\Big|\frac{\zeta_2}{\sigma_0}\Big|^2\Big)\Big) 
\end{align}
By choosing $A$ to be sufficiently large, $\omega'_1$ is positive definite on $X'\setminus \{p\}$.
By choosing $0<\alpha<1$ to be sufficiently small, $\omega'_1$ is an ALE K\"ahler form with asymptotic rate of at least $O(r^{-\nu})$,
for any $0<\nu<1$,
with respect to the coordinate: 
\begin{align}
\Psi: \overline{X\setminus K} \xrightarrow{(x^1,x^2,x^3,x^4)} \RR^4
\end{align}
By using the gluing argument used in the proof of Lemma \ref{tblemma} locally near $p$, we can modify $\omega'_1$ to be an ALE K\"ahler metric
$\omega_1$ on $X$.

By \cite[(4.7)]{HanViaclovsky}, we have
\begin{align}
J_1 = J_{Euc} + Re(\phi) + Q,
\end{align}
where $\phi\in \Gamma(X,\Lambda^{0,1}\otimes T^{1,0})$ and satisfies the integrablity condition
$\bar\partial\phi + [\phi,\phi] = 0$, where 
$Q \sim \phi * \nabla \phi$ as $|\phi|\to 0$, 
and $\phi\sim O(r^{-\nu})$.
Noting that proof of \cite[Lemma 5.3]{HanViaclovsky} remains valid under the weaker 
assumption that $\delta < 0$,  we may use a sublinear growth vector field
$Y$ in that argument to assume that $\phi$ is divergence free, i.e., 
$\bar\partial^*\phi = 0$.
Then
$(\bar\partial^*\bar\partial+\bar\partial\bar\partial^*)\phi = \bar\partial^*[\phi,\phi] = O(r^{-3+\epsilon})$ for any small $\epsilon>0$.
By standard elliptic estimate, we have $\phi\sim O(r^{-2+\epsilon})$
and $J_1-J_{Euc} \sim O(r^{-\mu})$.
Furthermore, by formula \eqref{omega'_1}, the asymptotic rate of $g_1$ can be improved
to $O(r^{-\mu})$, $-2<-\mu<-1$. 
The argument above completes the proof of Theorem~\ref{background_metric}.

\begin{remark}
\label{KahlerStein}
When $(X,J)$ is a Stein ALE K\"ahler surface, then the K\"ahler cone $\KK(X,J)$ (see Definition \ref{kcdef})
is isomorphic to the entire space $H^2(X,\RR)$. This can be shown by the following. 
Let $\omega_0$ be the fixed background K\"ahler form. 
By weighted Hodge theory, any element in $H^2(X,\RR)$ can
be represented by a harmonic $(1,1)$-form $h = O(r^{-3})$, as $r \to \infty$. 
Clearly, $\omega_0+h$ is a positive $(1,1)$-form outside 
of a compact set. As mentioned above, the function
$\varphi = (1+ |u^1|^2+\ldots+|u^N|^2)^\alpha$ $(0<\alpha<1)$ is a strictly pluri-subharmonic function on $X$, since $(X,J)$ is assumed to be Stein.  Then there exists a constant $C>0$, such that
$\omega_h = \omega_0 + h + C\cdot \sqrt{-1}\partial\bar\partial \varphi$
is a K\"ahler form on $X$. We can choose $\alpha$ to be small enough, such that $\omega_h$ is an ALE K\"ahler metric
of order $O(r^{-\mu})$, $-2<-\mu<-1$.
\end{remark}

\subsection{Smoothing of the M-resolution}
In this subsection, we will construct a deformation 
which will be used in the proof of Theorem~\ref{SFK_non_Artin} below. 
Following the definition in Section~\ref{versal}, we have the deformation to the normal cone $\hat{\X}'\subset Proj(R[z])\times \CC$.
For $t \in \Delta^*$, the punctured unit disc in $\CC$, there is a simultaneous resolution of $\X'$,
$\hat{\X}\rightarrow \Delta^*$, and we identify $\hat{\X}_1$ with $\hat{X}$.
Then we can apply a $\CC^*$-action such that
\begin{align}
(s_0,\ldots,s_N)\rightarrow (t^{k'} s_0, s_1,\ldots, s_N)
\end{align}
which induces a map from $\hat{\X}'_t$ to $\hat{X}'_1$,  which can be lifted to a diffeomorphism:
$f_t:  \X_t\rightarrow \X_1$, which furthermore induces a sequence of ALE K\"ahler metrics:
\begin{align}
\label{pullback_and_rescale}
(\X_t, g_t, J_t) = (\X_t, |t|^2\cdot f_t^* g_1, f_t^* J_1).
\end{align}
Note that $(\X_t,J_t)$ extends to a deformation of complex structure, 
with central fiber isomorphic to $\CC^2/\Gamma$, i.e., 
$\CC^2/\Gamma \hookrightarrow \Y \rightarrow \Delta$. 
Without loss of generality, assume $\Y\rightarrow \Delta$ is in the versal deformation
of $\CC^2/\Gamma$.
Furthermore, as $t \rightarrow 0$, there are basepoints $x_t \in \X_t$
such that  $(\X_t,g_t,J_t,x_t)$ converges to $(\CC^2/\Gamma, g_{Euc}, J_{Euc}, 0)$
in the sense of pointed Cheeger-Gromov convergence with uniform ALE asymptotic rate.
After a base change
\begin{equation}
\begin{tikzcd}
\Y' \arrow{r}{} \arrow[swap]{d}{} & \Y \arrow[swap]{d}{} \\
\Delta \arrow{r}{t\to t^d} & \Delta
\end{tikzcd}
\end{equation}
we have a partial-resolution $\pi: \Z\rightarrow \Y$, such that the central fiber $\Z_0$ is a $M$-resolution,
and $\Z\rightarrow \Delta$ is a $\QQ$-Gorenstein deformation of Type $T$ singularities.

By assumption $\Z_0$ admits an orbifold ALE SFK metric $g_0$, with $\pi$ as the ALE coordinate,
and of ALE asymptotic rate $O(r^{-\mu})$. Without loss of generality, assume that there is
 only one orbifold point in $z_0 \in \Z_0$.
By the convergence above, for each $0<t\leq 1$, there exists a diffeomorphism
\begin{align}
\psi_t: \CC^2/\Gamma\setminus B_\eta(0) \rightarrow \X_t
\end{align}
where $B_\eta(0)$ is with respect to the Euclidean distance,
such that $\psi_t^* g_t$ converges to $g_{Euc}$ under $C^{k,\alpha}_{-\mu}(\CC^2/\Gamma\setminus B_\eta(0), g_{Euc})$-norm for any integer $k>0$,
and $0<\alpha<1$.
Let $U_\eta= \pi^{-1}(B_\eta(0))\subset \Z_0$ be the lifting of the unit ball of $\CC^2/\Gamma$.
Then the map $\psi_t$ can be lifted to a map
\begin{align}
\tilde{\psi}_t: \Z_0\setminus U_\eta\rightarrow \X_t
\end{align}
We can assume that $U_\eta$ is contained in the unit geodesic ball $B_1(z_0,g_0)$ in $\Z_0$.
We have
\begin{align}
\label{J}
\|\tilde{\psi}_t^* J_t - \tilde{J}_0\|_{C^{k,\alpha}_{- \mu} (g_0)}\sim O(|t|^d)
\end{align}
where the norm is taken on the domain on $\Z_0\setminus B_1(z_0)$ as $|t| \to 0$. 
This is because, the family $\Y\rightarrow \Delta$ is a deformation of ALE K\"ahler metrics.
By a standard argument (normalizing each annulus $A_{2^k,2^{k+1}}(z_0)$ to unit size), it is not hard to see
that along this deformation, away from the singularity, the complex structure has a convergence rate of $O(|t|^d\cdot r^{-\mu})$.
The power $d$ comes from the base change.
Exactly as is  \cite[Lemma 15]{BiquardRollin}, 
the estimate \eqref{J} will be needed below to control the perturbation of
the K\"ahler form and complex structure. Moreover, since our base space is non-compact, we also need to control the asymptotic behavior as $r\to\infty$.

\subsection{Smoothing of ALE SFK orbifold metrics}
\label{smoothing_method}
In \cite{BiquardRollin}, Biquard-Rollin use a gluing method to construct 
the smoothing of a CscK orbifold along a one-parameter non-degenerate $\QQ$-Gorenstein deformation. 
We will adapt their proof under the ALE setting, 
which will produce a family of ALE SFK metrics
that degenerate to an orbifold metric at the central fiber.

\begin{theorem}
\label{SFK_non_Artin}
Let $\Z\rightarrow \Delta$ be the $\QQ$-Gorenstein deformation  from above,
where the central fiber $\Z_0$ a $M$-resolution (or a $P$-resolution), and $p\in \Z_0$ is the only singularity in $\Z_0$, which is of type $T_0$ (of type $T$).
Assume there exists an ALE SFK orbifold metric $(\Z_0,J_0,g_0)$. 
Then along this deformation, there exists 
a smooth family of ALE SFK metrics $(\Z_t,J_t,g_t)$ of order $O(r^{-\mu})$ that degenerates to the orbifold metric $(\Z_0,J_0,g_0)$ as $t\to 0$.

\end{theorem}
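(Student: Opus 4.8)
The plan is to adapt the gluing construction of Biquard--Rollin \cite{BiquardRollin} to the ALE category: for $|t|$ small we will produce an approximately scalar-flat K\"ahler metric on $(\Z_t,J_t)$ by grafting a model metric near the singular point $p$ onto $g_0$ away from $p$, and then perturb it to a genuine ALE SFK metric via the inverse function theorem in weighted H\"older spaces.

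First I would fix the local model. Near $p\in\Z_0$, which is of type $T_0$ (resp.\ type $T$), the $\QQ$-Gorenstein smoothing $\Z_t$ carries a distinguished Ricci-flat ALE K\"ahler metric, obtained as the quotient of an $A_k$-type hyperk\"ahler ALE metric by a finite group of isometries acting freely \cite{Suvaina_ALE,Wright_ALE}; call it $\hat g_t$, with asymptotic cone $\CC^2/\Gamma'$ where $\Gamma'$ is the local group at $p$. Using the diffeomorphism $\tilde\psi_t$ constructed above and the decay estimate \eqref{J} for the complex structures, I would graft a $|t|^2$-rescaled copy of $\hat g_t$ on a geodesic ball $B_\eta(p)$ onto $g_0$ on $\Z_0\setminus B_{2\eta}(p)$ by interpolating K\"ahler potentials across the neck annulus $A_{\eta,2\eta}(p)$ --- the same mechanism used in the proof of Lemma~\ref{tblemma}, now carried out with the family parameter $t$ present. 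This yields a K\"ahler form $\omega^{\mathrm{app}}_t$ on $(\Z_t,J_t)$ which is ALE of order $O(r^{-\mu})$, coincides with $g_0$ outside a fixed compact set, and has scalar curvature supported in the neck with $\|R(\omega^{\mathrm{app}}_t)\|\to 0$ as $t\to 0$ in a weighted norm adapted both to the shrinking bubble and to the decay at infinity.

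Next I would carry out the linear theory. Let $\mathcal{L}_g$ denote the Lichnerowicz operator, the linearization at $\phi=0$ of $\phi\mapsto R(\omega_g+\sqrt{-1}\,\partial\bar\partial\phi)$, acting $C^{4,\alpha}_\delta\to C^{0,\alpha}_{\delta-4}$. On the orbifold $(\Z_0,g_0)$, for a weight $\delta$ avoiding the discrete set of indicial roots of the flat model at infinity, $\mathcal{L}_{g_0}$ is Fredholm; it is injective on decaying potentials because $\Gamma$, hence $\Gamma'$, contains no complex reflections, which rules out nonzero decaying holomorphic vector fields, and a duality and integration-by-parts argument then gives surjectivity onto the complementary weighted space. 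The same holds for the hyperk\"ahler quotient model $\hat g_t$. Splicing a right inverse on the orbifold with one on the model by cutoff functions produces an approximate right inverse $P_t$ for $\mathcal{L}_{\omega^{\mathrm{app}}_t}$ whose norm is bounded uniformly in $t$ and for which $\|\mathcal{L}_{\omega^{\mathrm{app}}_t}P_t-\mathrm{Id}\|\to 0$; hence a genuine right inverse $Q_t$ with a uniform bound. The nonlinear step is then routine: writing $R(\omega^{\mathrm{app}}_t+\sqrt{-1}\,\partial\bar\partial\phi)=R(\omega^{\mathrm{app}}_t)+\mathcal{L}_{\omega^{\mathrm{app}}_t}\phi+N_t(\phi)$ with $N_t$ of quadratic type, the equation $R=0$ becomes the fixed-point problem $\phi=-Q_t(R(\omega^{\mathrm{app}}_t)+N_t(\phi))$, solved for $|t|$ small by the contraction principle; the solution $\phi_t$ is small in the weighted norm, so that $\omega_t=\omega^{\mathrm{app}}_t+\sqrt{-1}\,\partial\bar\partial\phi_t$ is ALE of order $O(r^{-\mu})$, it depends smoothly on $t$ by the same openness argument as in \cite[Section~8]{HanViaclovsky} applied fiberwise, and by construction $(\Z_t,J_t,g_t)\to(\Z_0,J_0,g_0)$ as $t\to 0$.

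The hard part will be the uniform control of $\mathcal{L}_{\omega^{\mathrm{app}}_t}$ through the degeneration: one must choose matched weights across the collapsing neck so that the spliced inverse does not lose uniformity as $t\to 0$, while simultaneously keeping the weight inside the admissible range $-2<-\mu<-1$ at infinity so that no kernel or cokernel is created there. One must also verify that the relevant relative deformation theory is unobstructed --- this is precisely the $\QQ$-Gorenstein non-degeneracy hypothesis used by Biquard--Rollin, together with the vanishing of holomorphic obstructions on the ALE SFK orbifold, which in our setting is guaranteed by the no-complex-reflections assumption on $\Gamma$.
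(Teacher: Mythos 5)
Your outline follows the Biquard--Rollin template, which is indeed the correct strategy and the one the paper uses, but it contains a genuine gap at the crucial technical step: the K\"ahlerization of the glued metric. You claim that interpolating K\"ahler potentials across the neck ``yields a K\"ahler form $\omega^{\mathrm{app}}_t$ on $(\Z_t,J_t)$ which \ldots coincides with $g_0$ outside a fixed compact set.'' This cannot be right: $g_0$ is K\"ahler with respect to $J_0$, whereas the model $\hat g_t$ near $p$ is K\"ahler with respect to $J_t$, and by \eqref{J} the two complex structures differ by $O(|t|^d r^{-\mu})$ on the gluing region. A cutoff interpolation of K\"ahler potentials in this setting does \emph{not} produce a closed $(1,1)$-form for $J_t$; the analogous interpolation in the proof of Lemma~\ref{tblemma} works only because the complex structure is fixed there. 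What the gluing actually produces is a Hermitian, approximately K\"ahler form $\overline\omega_t$ with $\|d\overline\omega_t\|$ and $\|\nabla^{LC}J_t\|$ of order $\epsilon^2$, and the nontrivial content of the Biquard--Rollin argument is the Hodge-theoretic correction: one uses the $\bar\partial_t$-Laplacian and an almost-orthogonal decomposition of $H^{1,1}$ (together with the ALE version of the relevant Fredholm theory) to find a small $\gamma_t$ so that $\overline\omega_t-\gamma_t$ is $\square_t$-closed, and then perturbs this to a genuinely $d$-closed, positive $(1,1)$-form $\omega_t'$. Your proposal skips this step entirely, which is precisely the step that separates the SFK problem from the hyperk\"ahler gluing problem, and it is where the estimate \eqref{J} and the choice of gluing scale $b(t)=\epsilon(t)^\beta$ are actually used.

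Once you have a genuine K\"ahler form $\omega_t'$, the remaining steps you describe --- Lichnerowicz linearization, Fredholm theory at weights in $(-2,-1)$, spliced approximate right inverse, contraction mapping, and the observation that the obstruction space (decaying holomorphic vector fields) vanishes --- are in line with the paper's argument (which cites the ALE versions in \cite[Section~8]{HanViaclovsky} and the reasoning of \cite[Section~4]{BiquardRollin}). Your aside about ``choosing matched weights across the collapsing neck'' is the right instinct, and in the paper this is handled by defining the weighted norms on the glued space via a cutoff that separates the bubble region from the base, with the weight $\mu$ chosen so that $\beta=\frac{2}{2+\mu}$ is close to $1/2$. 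But without filling in the K\"ahlerization step your approximate solution is not even in the right category for the scalar curvature equation to make sense as a perturbation problem in $\partial\bar\partial\phi$.
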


\begin{proof}
Without loss of generality, 
assume that $t$ real, and let $\epsilon(t) = t^{\frac{d}{2}}$.
Denote $(A_s,g_{A_s})$ as 
a $\ZZ_n$-quotient of a $A_{n-1}$-type gravitational instanton $(\overline{A}_s,g_{\overline{A}_s})$ 
that associated to the type $T_0$ singularity $\{p\}\in B$ of the form $\frac{1}{n^2}(1,na-1)$.
For the family of gravitational instantons $(\overline{A}_s,g_{\overline{A}_s})$, Kronheimer's construction gives the expansion
\begin{align}
F^*_{\epsilon(s)}g_{\overline{A}_s} = g_{Euc} + \xi(s)
\end{align}
where $\xi(s) = O(s^2\cdot R^{-4})$, and
$F$ is a diffeomorphism from $\overline{A}_s$ to the minimal resolution of ${\CC^2/\frac{1}{n}(1,n-1)}$
(see more details in \cite{Kronheimer} and \cite[Section 2]{BiquardRollin}).
In the current setting, $s=t^d$.
The $C^{k,\alpha}_{-\mu}(A_t,g_{A_t})$-norm is defined as in Definition~\ref{weighted_norm} for the weighted H\"older norm
on ALE manifolds.
Let $U\subset \Z_0$ be an open neighborhood of $p$, which is isomorphic to an open neighborhood of $\{0\}\in \CC^2/\frac{1}{n^2}(1,na-1)$.
Let $r\in C^0(\Z_0)\cap C^\infty(\Z_0\setminus \{p\})$ be a function such that $r(p)$ 
is the Euclidean distance
to $p$ in $U$ and coincides with the radius of the ALE metric $g_0$ outside of a compact subset.
Define the weighted H\"older norm $C^{k,\alpha}_{-\mu}(\Z_0,g_0)$ as in Definition \ref{weighted_norm}, where $r$ is defined as above.
For any $u\in C^{k,\alpha}_{-\mu}(\Z_0,g_0)$, when $r\to 0$ or $r\to\infty$, $u = O(r^{-\mu})$.
We can define $C^{k,\alpha}_{-\mu}(\Z_t,g_t)$ in a similar way.

Define the gluing scale $b(t) = \epsilon(t)^\beta$, where $\beta = \frac{2}{2+\mu}$, 
$-2<-\mu<-1$ is the ALE asymptotic rate of the metric constructed such that $\beta$ is close to $\frac{1}{2}$.
Let $\chi: \RR^+\to\RR^+$ be a smooth nondecreasing  function 
\begin{align}
\chi(t) =
\begin{cases}
0 & t < 1/2\\
1 &  t \geq 1.\\
\end{cases}
\end{align}   
Let $H_{\epsilon^{-1}}$ be the homothety that identifies $b\leq r\leq 4b$ in $\Z_t$ with $\frac{b}{\epsilon}\leq R\leq \frac{4b}{\epsilon}$ in 
$A_t$. Attach $A_t$ and $\Z_t$ together by $H_{\epsilon^{-1}}$ to obtain a manifold $\X_t$, which is diffeomorphic to $\Z_t$. Define a Riemannian metric on $\X_t$
\begin{align}
{\tilde{h}_t} = 
\begin{cases}
\epsilon^2\cdot H_{\epsilon^{-1}}^*\big( g_{Euc}+\epsilon^{-2}\xi(\epsilon^2) \big) 
& r\leq b \\
g_t  &r\geq 4 b\\
\epsilon^2\cdot H_{\epsilon^{-1}}^*\big( g_{Euc}+(1-\chi(\frac{\epsilon}{b}R-1))\epsilon^{-2}\xi(\epsilon^2) \big)
& b\leq r\leq 2b.\\
\end{cases}
\end{align}
Define the Hermitian metric $h_t = \frac{1}{2}(\tilde{h}_t+\tilde{h}_t(J_t\cdot, J_t\cdot))$.
Note that as $\epsilon\to 0$, the limit of $g_{Euc}+\epsilon^{-2}\xi(\epsilon^2)$ is called the tangent graviton to the deformation
in \cite{BiquardRollin}.
The weighted H\"older norm $C^{k,\alpha}_{-\mu}(\X_t,h_t)$ can be defined 
by using $\chi$ to separate a function on $\X_t$ into functions supported separately 
on $A_t$ and $\Z_t$, and adding the corresponding norms together.
See more details in \cite[Section~3.3.3]{BiquardRollin}.
Denote $\overline{\omega}_t$ as the $(1,1)$-form corresponding to the Hermitian metric $h_t$.
By the same calculation as done in \cite[Section 3.4]{BiquardRollin}, when $\beta$ is close to $\frac{1}{2}$,
using \eqref{J}, it follows that  
\begin{align}
\|d \overline{\omega}_t\|_{C^{k,\alpha}_{-\mu}(\X_t,h_t)} &\leq C_k \cdot \epsilon^2\\
\|\nabla^{LC}J_t\|_{C^{k,\alpha}_{-\mu-1}(\X_t,h_t)} &\leq C_k\cdot \epsilon^2.
\end{align}
We next employ these estimates to perturb $h_t$ into to a K\"ahler metric.
As in  \cite[Section~3.5]{BiquardRollin}, there is a map of spaces of harmonic $(1,1)$-forms
\begin{align*}
H^{1,1}_{A_t}\oplus H^{1,1}_{\Z_0}\rightarrow K^{1,1}_t,
\end{align*}
where elements in  $K^{1,1}_t$ are very close to harmonic elements in 
$H^{1,1}(\X_t)$.
This implies an $L^2$-``almost orthogonal'' decomposition for
$2$-forms on $\X_t$.
The $\bar\partial_t$-Laplacian $\square_t = \bar\partial_t\bar\partial^*_t+\bar\partial_t^*\bar\partial_t$ is defined by using the background hermitian
form $\overline{\omega}_t$, which is a Fredholm operator with respect to the
$C^{k,\alpha}_{-\mu}$-norm.
Then $H^{1,1}(\X_t)$ is represented by $\bar\partial_t$-harmonic forms in $\H_{-\mu}(\X_t,\Lambda^{1,1})$.
Since $\X_t$ is K\"ahler outside of a compact subset, by a similar proof as in \cite[Proposition~3.5]{HanViaclovsky},
$\H_{-\mu}(\X_t,\Lambda^{1,1})\simeq \H_{-3}(\X_t,\Lambda^{1,1})$, so that the $L^2$-orthogonal decomposition still makes sense
under the ALE setting.

By the perturbation argument in \cite[Section 3.5]{BiquardRollin},  
there exists a $(1,1)$-form $\gamma_t$ which is ``almost orthogonal'' to $K^{1,1}_t$, 
such that $\overline{\omega}_t-\gamma_t$ is $\square_t$-closed, and 
\begin{align}
\|\gamma_t\|_{C^{k+2,\alpha}_{-\mu}(\X_t)}\leq C_k\cdot \|\square_t \overline{\omega}_t\|_{C^{k,\alpha}_{-\mu-2}(\X_t)}.
\end{align}
Exactly as in \cite[Lemma 26]{BiquardRollin}, $\overline{\omega}_t-\gamma_t$ can then 
be perturbed to a $d$-closed $(1,1)$-form, whose real part $\omega_{t'}$,
is a K\"ahler form. The adaptation of Biquard-Rollin's argument to the ALE 
case is entirely analogous to \cite[Section 7]{HanViaclovsky}.

By an implicit function type argument as in \cite[Section~4]{BiquardRollin} 
adapted to the ALE case in \cite[Section~8]{HanViaclovsky},
we can solve the equation $R(\omega_t) = 0$ $(t> 0)$ where each $\omega_t$ is a small perturbation of $\omega_{t}'$. It should be emphasized here that, in the compact case, there is an obstruction to the smoothing of
a CscK orbifold which is given by holomorphic vector fields on $\X_t$ for $t>0$ small. 
However, under the ALE setting, the scalar curvature defines a $4$th-order nonlinear PDE
\begin{align}
R: C^{k,\alpha}_a(\X_t) &\rightarrow  C^{k-4,\alpha}_{a-4}(\X_t)\\
\varphi &\rightarrow R(\omega_{b,t}+\sqrt{-1}\partial\bar\partial\varphi)
\end{align}
where $0<a,\alpha<1$, $k\geq 4$, $t>0$ is sufficiently small.
The cokernel of the linearization of $R$ corresponds to the space of decaying holomorphic vector fields on $\X_t$,
which is trivial as proved in \cite[Proposition~3.3]{HanViaclovsky}. As a result, there is no obstruction in the ALE case. 
We have therefore obtained a family of ALE SFK metrics $\omega_t$, which, by construction, degenerate to the original ALE SFK orbifold metric 
on the $M$-resolution as $t\to 0$.

\end{proof}
\begin{remark}\rm
In case of a $P$-resolution, for Theorem~\ref{SFK_non_Artin}, we require the direction of the deformation $\Z\rightarrow \Delta_\RR$ 
to be away from the discriminant locus (the subset of $\JJ^P_k$ where the Weyl group does not act freely.) See more details in
\cite{BiquardRollin}.
\end{remark}

\subsection{Completion of proof of Theorem \ref{classification}}
\label{completion}
For the proof of (a),
over the Artin component $\JJ_0$,
an initial ALE SFK metric $(X,J_0,g_0)$ on the minimal resolution of $\CC^2/\Gamma$
can be constructed by using \cite{CalderbankSinger} in the cyclic case, 
and \cite{LV14} in the general case. 
By \cite[Theorem~1.4]{HanViaclovsky}, there exists an open neighborhood of $J_0$ in $\JJ_0$, such that
for any complex structure $J$ in this open neighborhood, there exists an ALE SFK metric on $(X,J)$. 
We then apply the $\CC^*$-action on $\JJ_0$. As in~\eqref{pullback_and_rescale}, by the pull-back under the $\CC^*$-action,
and a rescaling of metrics such that the ALE coordinate is fixed, 
we can construct an ALE SFK metric in $\KK(J)$ for any $J$ in $\JJ_0$.

For the proof of (b), 
take $J\in \JJ^M_k$. By the assumption of (b), 
there exists an ALE SFK orbifold metric on the 
associated $M$-resolution $Z_k^M$. Then there exists an open
neighborhood $U\subset \JJ^M_k$ of $\Z_0$, such that for any complex structure 
$J\in U \setminus \{0\}$,
there exists a ALE SFK metric on $(X,J)$, by applying Theorem~\ref{SFK_non_Artin}.
By the pull-back of the $\CC^*$-action, and a rescaling of metrics to fix the ALE coordinate, 
we can also construct a ALE SFK metric for some K\"ahler class in $\KK(J)$,
for all $J\in{\JJ^M_k} \setminus \{0\}$.

For the proof of (c), 
denote ${\JJ^P_k}'\subset \JJ^P_k$ as the subset away from the discriminant locus, with ${\JJ^P_k}'$ is open and dense in $\JJ^P_k$.
Following exactly Case (b), we can construct an ALE SFK metric for some K\"ahler class in $\KK(J)$, for all $J\in{\JJ^P_k}'$.

\subsection{Proof of Corollary~\ref{cyclicthm1}}
The Artin component follows from Case (a) in Theorem~\ref{classification}. 
Next, assume $J \in \JJ^M_k$ with $k > 0$. We can obtain an ALE SFK orbifold metric
on the corresponding $M$-resolution $X_0$ using the Calderbank-Singer construction. 
To see this, notice that the $M$-resolution of $\CC^2/\Gamma$ is toric.
Let $\pi:\widetilde{X_0}\rightarrow X_0$ be its minimal resolution.
In the corresponding moment polygon of $\widetilde{X_0}$, each segment in the boundary represents an exceptional divisor in $\widetilde{X_0}$.
By using Joyce's construction as done in \cite{CalderbankSinger}, there exists a family of ALE SFK metrics on $\widetilde{X_0}$, which is 
parameterized by lengths of boundary segments. 
By decreasing the lengths of segments that correspond to the exceptional divisors contracted by $\pi$
to $0$, the Gromov-Hausdorff limit will be the desired ALE SFK orbifold metric on $X_0$.
Equivalently, these orbifolds can be directly constructed by choosing the lengths of the corresponding boundary 
segments to be exactly zero, in which case the Calderbank-Singer metrics are ALE SFK metrics with orbifold singularities. 
Corollary~\ref{cyclicthm1} is then a consequence of this observation and Case (b) in Theorem \ref{classification}. 

\section{Examples} 
\label{existsec}
In this section, we give the details of the examples in Subsection~\ref{introex} from the Introduction. Namely, we prove Theorems~\ref{theorem1} and~\ref{exthm2}. 
First we recall some important details of cyclic quotient singularities. 

\subsection{Cyclic quotient singularities}

Let $1 \leq q < p$ be relatively prime integers. For a type $\frac{1}{p}(1,q)$-action, let $\widetilde{X}$ be the minimal resolution 
of $\CC^2/ \Gamma(q,p)$.  
 Integers $k$ and $e_i, i = 1 \dots k,$ are defined by the following Hirzebruch-Jung modified Euclidean algorithm:
\begin{align}
\label{modified_EA}
p =e_1q-a_1, \ q =e_2a_1-a_2, \dots, a_{k-3} =e_{k-1}a_{k-2}-1, a_{k-2} = e_k a_{k-1} = e_k,
\end{align}
where the numbers $e_i \geq 2$ and $0 \leq a_i < a_{i-1}, \ i = 1 \dots k$, see \cite{Hirzebruch1953}.  The integer $k$ is called the {\textit{length}} of the modified Euclidean algorithm. 
 This can also be written as the continued fraction expansion
\begin{align}
\frac{q}{p} = \cfrac{1}{e_1 - \cfrac{1}{e_2 - \cdots \cfrac{1}{e_k}}} \equiv[e_1, e_2, \dots, e_k]. 
\end{align}

Recall that exceptional divsor in $\widetilde{X}$ is a string of rational curves, $E_i$ for $i = 1 \dots k$ with $E_i \cdot E_i = - e_i$, and each curve has intersection $+1$ with the adjacent curve, where it has a simple normal crossing singularity. This is represented by the following graph.

\vspace{2mm}
\setlength{\unitlength}{2cm}
\begin{picture}(0,0)(-1.5,0)
\linethickness{.3mm}
\put(1,0){\line(1,0){1}}
\put(1,0){\circle*{.1}}
\put(.75,-.25){$-e_1$}
\put(2,0){\circle*{.1}}
\put(1.75, -.25){$-e_2$}
\multiput(2,0)(0.1,0){10}
{\line(1,0){0.05}}
\put(3,0){\circle*{.1}}
\put(2.75, -.25){$-e_{k-1}$}
\put(3,0){\line(1,0){1}}
\put(4,0){\circle*{.1}}
\put(3.75, -.25){$-e_k$}
\end{picture}

\vspace{7mm}

\noindent
which we will also denote as $(e_1, \dots, e_k)$. For details on cyclic quotient 
singularities see \cite{R1974}. 

For $\Gamma = \frac{1}{p}(1,q)$, the following formula is proved in \cite{AshiIshi,LockViaclovskyJEMS}
\begin{align}
\label{etacyclic}
\eta(S^3/\Gamma)=\frac{1}{3}\Big(\sum_{i=1}^ke_i+\frac{q^{-1;p}+q}{p}\Big)-k,
\end{align}
where the $e_i$ and $k$ are as defined in \eqref{modified_EA}, and $q^{-1;p}$ denotes the inverse of $q\text{ mod } p$.

\subsection{Artin component examples}
\label{ACE}
In these cases, we will next discuss the topological condition $\mathcal{C}(X) > 0$. 
First, we consider the case that $\Gamma \subset {\rm{SU}}(2)$, 
and $X$ is diffeomorphic to the minimal resolution of $\CC^2/ \Gamma$. 
In this case, we have equality in Nakajima's Hitchin-Thorpe inequality \cite{Nakajima}, 
so we have 
\begin{align}
2 \chi(X) + 3 \tau(X) = \frac{2}{|\Gamma|} + 3 \eta( S^3/\Gamma). 
\end{align}
The left hand side is equal to $2 - b_2(X)$, so we obtain
\begin{align}
\label{csu2}
\mathcal{C}(X) = \frac{4}{|\Gamma|}  > 0.
\end{align}

Next,  consider the cases in Theorem \ref{theorem1}. 
For the Artin component, if $\Gamma$ is cyclic it follows from \eqref{etacyclic}
that 
\begin{align}
\label{cxcyclic}
\begin{split}
\mathcal{C}(X) 
&= 2 - b_2(X) + \frac{2}{p} - 3  \eta\Big(\frac{1}{p}(1,q) \Big)
 = 2 - \sum_{i=1}^k(e_i-2) + \frac{2 - q^{-1;p}-q}{p}.
\end{split}
\end{align}

For $\Gamma = \frac{1}{3}(1,1)$, we have $p =3, q = 1, e_1 = 3, k = 1, 1^{-1;3} = 1$.
If $X$ is in the Artin component of $\Gamma$, then \eqref{cxcyclic} yields
$\mathcal{C}(X) = 1 > 0$. 

For $\Gamma = \frac{1}{5}(1,2)$, the dual graph is $(3,2)$, 
and we have $p =5, q =2, k = 2, 2^{-1;5} = 3$. If $X$ is in the Artin component of $\Gamma$, then \eqref{cxcyclic} yields $\mathcal{C}(X) = \frac{2}{5} > 0$. 

For $\Gamma = \frac{1}{7}(1,3)$, the dual graph is $(3,2,2)$, 
and we have $p =7, q =3, k = 3, 3^{-1;7} = 5$. If $X$ is in the Artin component of $\Gamma$, then \eqref{cxcyclic} yields
$\mathcal{C}(X) = \frac{1}{7} > 0$. 

Below, we will consider various non-Artin components of cyclic quotient 
singularities. For these, we will have $b_2(X) < k$. The 
modification to the formula for $\mathcal{C}(X)$ is simply the following
\begin{align}
\label{csnac}
\mathcal{C}(X) & = 2 + (k - b_2(X)) -  \sum_{i=1}^k(e_i-2) + \frac{2 - q^{-1;p}-q}{p}.
\end{align}

\subsection{Type T cyclic quotient singularities}
We recall the main definition from \cite{KollarShepherd}. 
\begin{definition}
If $\Gamma = \frac{1}{r^2 s} (1, r s d -1)$ where $r \geq 2, s \geq 1, (r,d) = 1$, 
then $\Gamma$ is said to be of type $T_{s-1}$. 
\end{definition}
We will also denote this action by $T(r,s,d)$. 
For type $T$ singularities, there exists non-Artin component such that the corresponding 
space $X$ satisfies $b_2(X) = s -1$. 
Note that this group is covered by the group $\tilde{\Gamma} = \frac{1}{r s} (1, r s -1)$, quotiented by a $\ZZ_r$-action. The spaces $X$ in the non-Artin component admit Ricci-flat metrics which are isometric quotients of an $A_{rs-1}$ hyperk\"ahler metric
\cite{Suvaina_ALE, Wright_ALE}.
We also note that the embedding dimension is $r +3$, and the base of the non-Artin component has dimension $s$ \cite{KollarShepherd, BehnkeChristophersen}. 
The following Proposition gives a useful description of the type T singularity in terms of their dual graphs.

\begin{proposition} \label{TTprop}
If $(e_1, \dots, e_k)$ 
is of type $T_{s-1}$,  then the graphs
$(2, e_1, e_2, \dots, e_{k-1}, e_k + 1)$ 
and 
$(e_1 +1, e_2, \dots, e_{k-1}, e_{k}, 2)$. are also of Type $T_{s-1}$. 
Type $T_0$ are those obtained starting from $(-4)$. 
Type $T_1$ are those obtained starting from $(3,3)$. 
In general, for $s > 2$, type $T_{s-1}$ are those obtained starting from 
$(3, \underbrace{2, \dots, 2}_{s-2}, 3)$ 
and iterating the above procedure $(r-2)$ times. 
\end{proposition}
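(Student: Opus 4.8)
The statement is purely combinatorial, so the plan is to push everything through the Hirzebruch--Jung dictionary and then run an induction. Recall that a string $(e_1,\dots,e_k)$ with all $e_i\ge 2$ is the dual graph of the cyclic quotient singularity $\frac1p(1,q)$ with $\frac qp=[e_1,\dots,e_k]$, and that $(p,q)$ is recovered from the string by running the modified Euclidean algorithm \eqref{modified_EA} in reverse --- equivalently, $(p,q)$ is the first column of the matrix product $\left(\begin{smallmatrix} e_1 & -1\\ 1 & 0\end{smallmatrix}\right)\cdots\left(\begin{smallmatrix} e_k & -1\\ 1 & 0\end{smallmatrix}\right)$. By definition, $\frac1p(1,q)$ is of type $T_{s-1}$ exactly when $p=r^2s$ for some $r\ge 2$ and $q\equiv rsd-1\pmod{r^2s}$ with $(r,d)=1$; thus ``type $T_{s-1}$'' is a condition on $(p,q)$ alone. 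Two elementary remarks will be used repeatedly: reversing a string replaces $q$ by $q^{-1}\bmod p$, hence gives an isomorphic singularity; and since $q=rsd-1\equiv-1\pmod{rs}$ we have $q^{-1}\equiv-1\pmod{rs}$, so $q^{-1}=rsd'-1$ for some $d'$, and expanding $qq^{-1}\equiv1\pmod{r^2s}$ gives $d'\equiv-d\pmod r$, hence $(r,d')=1$. In particular type $T_{s-1}$ is stable under reversal, and the two operations in the statement are interchanged by reversal.

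\textbf{Base cases and the forward implication.} For $r=2$ one has $p=4s$, $q=2sd-1$ with $d$ odd; reducing modulo $4s$ yields, for every admissible $d$, the normal form $\frac1{4s}(1,2s-1)$, and a one-line induction on the number of interior entries shows $[\,3,\underbrace{2,\dots,2}_{s-2},3\,]=\frac{2s-1}{4s}$, which degenerates to $[4]=\frac14$ for $s=1$ and to $[3,3]=\frac38$ for $s=2$. So the $r=2$ members of $T_{s-1}$ are precisely the graphs $(4)$, $(3,3)$, and $(3,\underbrace{2,\dots,2}_{s-2},3)$. For the forward implication, a direct computation with the matrix product above (or with the continued fraction itself) shows that if $(e_1,\dots,e_k)$ has data $(p,q)=(r^2s,\,rsd-1)$, then the strings $(2,e_1,\dots,e_{k-1},e_k+1)$ and $(e_1+1,e_2,\dots,e_k,2)$ again have data of the form $(p',q')=({r'}^2s,\,r's d'-1)$ with the \emph{same} $s$ and $(r',d')=1$; in particular both outputs are of type $T_{s-1}$, which is the first assertion. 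One records here how $(r',d')$ depends on $(r,d)$, since that is what controls the number of iterations needed to reach a given graph.

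\textbf{The converse and the main obstacle.} For the converse --- that every type $T_{s-1}$ graph is obtained from a base graph by iterating the two operations, and after exactly $r-2$ steps --- I would argue by descending induction on the length $k$. The structural fact driving this is the dichotomy: a type $T_{s-1}$ string that is not a base graph begins with a $2$ or ends with a $2$, but not both; undoing the corresponding operation (delete the boundary $2$ and subtract one from the opposite endpoint) produces a strictly shorter string whose $(p,q)$-data, by the same matrix computation read backwards, is again of type $T_{s-1}$ with the same $s$. Since the length decreases and the shortest type $T_{s-1}$ graph is the base graph, the process terminates there, and reading the steps forward gives the stated description. The main obstacle is exactly this bookkeeping: establishing the ``begins or ends with a $2$'' dichotomy, checking that each reverse move lands in type $T_{s-1}$ with unchanged $s$, matching the step count to $r-2$, and handling the self-reverse graphs (those with $q^2\equiv1\pmod p$) together with the small degeneracies $s=1$ (base $(4)$) and $s=2$ (base $(3,3)$) separately all require the full Hirzebruch--Jung arithmetic tying the string to the Koll\'ar--Shepherd-Barron parameters $(r,s,d)$. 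This is the standard combinatorial induction of \cite{KollarShepherd}; I would carry out the explicit $(p,q)$-computations where they are needed and cite that reference for the remaining routine verifications.
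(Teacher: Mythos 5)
The paper gives no proof of Proposition~\ref{TTprop}: it is stated as a recall of the combinatorial classification of type~$T$ strings due to Koll\'ar--Shepherd-Barron, and the paper implicitly defers to \cite{KollarShepherd} for it. There is therefore no ``paper proof'' to compare against line by line.

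That said, your sketch follows exactly the line of argument one would expect and that \cite{KollarShepherd} carries out: the Hirzebruch--Jung correspondence written multiplicatively as a product of $2\times 2$ matrices, the observation that string reversal corresponds to $q\mapsto q^{-1}\bmod p$ and preserves the type-$T$ condition (with $d'\equiv -d\bmod r$), the identification of the $r=2$ base graphs $(4)$, $(3,3)$, $(3,2,\dots,2,3)$ via $[3,\underbrace{2,\dots,2}_{s-2},3]=\tfrac{2s-1}{4s}$, the forward implication by a matrix/continued-fraction computation showing the two moves take $(r^2s, rsd-1)$ to $({r'}^2s, r'sd'-1)$ with the same $s$ and $(r',d')=1$, and the converse by descending induction on length using the dichotomy that every non-base type-$T$ string begins with a $2$ or ends with a $2$ but not both. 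Your arithmetic spot-checks (the mod-$rs$ computation showing $q^{-1}\equiv -1$, the base-case continued fractions, the $d+d'\equiv 0\bmod r$ relation) are all correct. This is not a self-contained proof --- you explicitly punt the dichotomy and the step-count bookkeeping to \cite{KollarShepherd} --- but since the paper itself treats the proposition as a recalled classification result and offers no argument, your level of detail and deference to the reference is appropriate and your approach is the standard one.
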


Using this characterization, we can prove the following. 
\begin{proposition} 
\label{importprop}
Let $\Gamma$ be of type $T(r,s,d)$,  $\ell$ denote the total number of exceptional 
curves in the minimal resolution of $\CC^2/\Gamma$, and $-e_i$ denote the self-intersection number of the $i$th curve, $i = 1 \dots \ell$. Then 
\begin{align}
\ell &= r + s -2\\
\sum_{i =1}^{\ell} e_i &= 3r + 2 s - 4
\end{align}
Furthermore, we have
\begin{align}
\eta(\Gamma) &= \frac{1}{3} \Big( 3 - s - \frac{2}{r^2 s} \Big)\\
\mathcal{C}(X) & = \frac{4}{r^2 s}
\end{align}
\end{proposition}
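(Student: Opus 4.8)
The plan is to prove Proposition~\ref{importprop} by induction on $r$, using the recursive description of type $T$ dual graphs given in Proposition~\ref{TTprop}, and then to deduce the $\eta$-invariant and $\mathcal{C}(X)$ formulas from the already-established formula \eqref{etacyclic} together with \eqref{csnac}.

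First I would establish the two combinatorial identities $\ell = r+s-2$ and $\sum_{i=1}^\ell e_i = 3r+2s-4$. The base case is $r=2$: by Proposition~\ref{TTprop}, the type $T_{s-1}$ singularity with $r=2$ has dual graph $(3,\underbrace{2,\dots,2}_{s-2},3)$ for $s>2$ (and $(-4)$ for $s=1$, i.e.\ $(4)$, and $(3,3)$ for $s=2$). In each case one checks directly that $\ell = s$ when $s \geq 2$ with the string $(3,2,\dots,2,3)$ of length $s$, but note that we want $\ell = r+s-2 = s$ — wait, for $r=2$ this gives $\ell = s$, which matches the length of $(3,2,\dots,2,3)$; and $\sum e_i = 3 + 2(s-2) + 3 = 2s+2 = 3\cdot 2 + 2s - 4$, as required. (The cases $s=1,2$ are checked separately against $(4)$ and $(3,3)$.) For the inductive step, Proposition~\ref{TTprop} says that passing from $r$ to $r+1$ replaces $(e_1,\dots,e_k)$ by either $(2,e_1,\dots,e_{k-1},e_k+1)$ or $(e_1+1,e_2,\dots,e_k,2)$; in either case $\ell$ increases by $1$ and $\sum e_i$ increases by $3$ (one new curve of self-intersection $-2$, and one existing $e_i$ increased by $1$). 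Since the claimed formulas also increase by $1$ and $3$ respectively when $r \mapsto r+1$ with $s$ fixed, the induction closes.

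Next I would compute $\eta(\Gamma)$. Since $\Gamma = \frac{1}{r^2 s}(1, rsd-1)$ is cyclic with $p = r^2 s$, $q = rsd - 1$, and length $k = \ell = r+s-2$, formula \eqref{etacyclic} gives
\begin{align}
\eta(S^3/\Gamma) = \frac{1}{3}\Big( \sum_{i=1}^\ell e_i + \frac{q^{-1;p} + q}{p}\Big) - \ell.
\end{align}
Substituting $\sum e_i = 3r+2s-4$ and $\ell = r+s-2$, the terms combine to $\frac{1}{3}(3r+2s-4) - (r+s-2) + \frac{1}{3}\cdot\frac{q^{-1;p}+q}{p} = \frac{1}{3}(2 - s) + \frac{1}{3}\cdot\frac{q^{-1;p}+q}{p}$. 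So it remains to show $\frac{q^{-1;p}+q}{p} = 1 - \frac{2}{r^2 s}$, i.e.\ $q^{-1;p} + q \equiv$ the integer $r^2 s - 2 = p - 2$, equivalently $q^{-1} + q \equiv -2 \pmod{p}$ together with the size bound $0 < q^{-1;p}+q < 2p$ forcing equality to $p-2$. Multiplying $q^{-1} + q \equiv -2$ by $q$ gives the equivalent congruence $1 + q^2 \equiv -2q \pmod{p}$, i.e.\ $(q+1)^2 \equiv 0 \pmod{p}$. Since $q + 1 = rsd$ and $p = r^2 s$, we have $(q+1)^2 = r^2 s^2 d^2 = p \cdot s d^2$, which is indeed $\equiv 0 \pmod p$. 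The size bound follows since $q = rsd-1 < r^2 s = p$ (as $d < r$ is forced by $(r,d)=1$ and $d$ ranging appropriately — more carefully, one uses $1 \le d$, $rsd - 1 < r^2 s$, which holds; and $q^{-1;p} < p$), so $0 < q^{-1;p} + q < 2p$, hence $q^{-1;p} + q = p - 2$. This yields $\eta(\Gamma) = \frac{1}{3}(3 - s - \frac{2}{r^2 s})$ after adding the $+1$: actually $\frac{1}{3}(2-s) + \frac{1}{3}(1 - \frac{2}{r^2 s}) = \frac{1}{3}(3 - s - \frac{2}{r^2 s})$, as claimed.

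Finally, $\mathcal{C}(X)$ follows from \eqref{csnac}: with $b_2(X) = s-1$ (the dimension of $H^2$ of the non-Artin-component smoothing stated in the text), $k = \ell = r+s-2$, and $\sum(e_i - 2) = (3r+2s-4) - 2(r+s-2) = r$, we get
\begin{align}
\mathcal{C}(X) = 2 + \big((r+s-2) - (s-1)\big) - r + \Big(1 - \frac{2}{r^2 s}\Big) - \Big(1 - \frac{2}{r^2 s}\Big),
\end{align}
wait — more directly, $\mathcal{C}(X) = 2 + (k - b_2(X)) - \sum(e_i-2) - 3\eta(\Gamma) + \eta(\Gamma)$-type bookkeeping; cleanly, using $\mathcal{C}(X) = 2 + (k - b_2(X)) - \sum(e_i - 2) + \frac{2 - q^{-1;p} - q}{p}$ and $q^{-1;p}+q = p-2$ so $\frac{2-q^{-1;p}-q}{p} = \frac{4-p}{p} = \frac{4}{p} - 1 = \frac{4}{r^2 s} - 1$, we obtain $\mathcal{C}(X) = 2 + (r-1) - r + \frac{4}{r^2 s} - 1 = \frac{4}{r^2 s}$. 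The main obstacle I anticipate is not any single hard step but rather pinning down the base cases of the induction precisely (the graphs $(4)$, $(3,3)$, $(3,2,\dots,2,3)$) and, more substantively, justifying the number-theoretic identity $q^{-1;p} + q = p-2$ — in particular verifying the inequalities that upgrade the congruence $(q+1)^2 \equiv 0 \pmod p$ to an actual equality of integers; the arithmetic of $d$ modulo $r$ needs slight care when $d$ is not reduced, but replacing $d$ by its residue does not change the singularity type.
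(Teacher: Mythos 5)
Your proof is correct in broad outline and matches the paper's approach for the combinatorial identities (the paper just says these ``follow easily from Proposition~\ref{TTprop}''; your induction makes that precise). You do take a genuinely different route for the number-theoretic input: the paper exhibits the explicit inverse $q^{-1;p} = rs(r-d)-1$ and verifies it directly, whence $q + q^{-1;p} = r^2s - 2$ by inspection, whereas you bypass the explicit inverse and argue via the congruence $(q+1)^2 = r^2s^2d^2 \equiv 0 \pmod{r^2 s}$, which is cleaner.

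However, there is a small but genuine gap in how you upgrade that congruence to an equality of integers. Knowing $q + q^{-1;p} \equiv -2 \pmod p$ together with the bounds $0 < q < p$ and $0 < q^{-1;p} < p$ only pins the sum down to $\{p-2,\ 2p-2\}$, not to $p-2$ alone. You need to rule out $q + q^{-1;p} = 2p-2$: since $q^{-1;p}<p$, this would force $q > p-2$, hence $q = p-1$, i.e.\ $rsd - 1 = r^2s - 1$, i.e.\ $d = r$, which contradicts $(r,d)=1$ (as $r\geq 2$). Once this case is excluded your argument closes; the paper sidesteps the issue entirely because exhibiting the explicit inverse $rs(r-d)-1$ with $1\leq d\leq r-1$ already makes the size constraint automatic. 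The rest of your computation of $\eta(\Gamma)$ and $\mathcal{C}(X)$ is correct and identical to the paper's.
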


\begin{proof}The first two formulas follow easily from the description in Proposition \ref{TTprop}. 
Without loss of generality, assume that $1 \leq d \leq r -1$. 
Then the inverse of $r s d - 1$ modulo $r^2s$ is given by $rs ( r -d) -1$.
To see this, 
\begin{align}
(rsd -1) ( rs (r-d) -1) -1 = - r^2 s (1 + d^2 s - rsd) \equiv 0 \mod r^2 s.
\end{align}
Therefore, letting $p = r^2 s$, and $q = r s d -1$, and using \eqref{etacyclic}, we have
\begin{align}
\begin{split}
\eta( \Gamma) &= \frac{1}{3}\Big(\sum_{i=1}^{\ell}e_i+\frac{q^{-1;p}+q}{p}\Big)-k
 =\frac{1}{3} \Big( 3 - s - \frac{2}{r^2 s} \Big).
\end{split}
\end{align}
Finally, by \eqref{csnac}, we have
\begin{align}
\mathcal{C}(X) &= 2 - (s - 1) + \frac{2}{r^2 s} - \Big( 3 - s - \frac{2}{r^2 s} \Big)  = \frac{4}{r^2 s}.
\end{align}
\end{proof}
\begin{remark} Note that $\mathcal{C}(X) = \frac{4}{|\Gamma|}$, something that we already knew 
had to be true from the Nakajima-Hitchin-Thorpe inequality, similarly to \eqref{csu2}. 
\end{remark}

\begin{remark} 
\label{revrem}
Without loss of generality, we can assume that $1 \leq d \leq r -1$. We showed above that 
\begin{align}
\frac{1}{r^2 s} (1, r s d -1) \sim \frac{1}{r^2s} (1, rs(r-d) -1).
\end{align}
This means that $T(r,s,d) \sim T(r,s, r-d)$ are equivalent singularities, but note that the ordering of the
self-intersection numbers $e_i$ is reversed in each case. 
\end{remark}
\subsection{Add a single $(-2)$-curve to a type $T$}
Given  $(e_1, \dots, e_k)$ of Type $T_{s-1}$, 
we consider the graph $(2, e_1, \dots, e_k)$.
Note that, we could also put the $(-2)$ curve on the right hand side. However, this would give an equivalent singularity taking the conjugate Type $T$ singularity (from Remark \ref{revrem}), which  
reverses the order of the self-intersection numbers, and still putting the $(-2)$ curve on the left. 
So let us write the type $T$ string as $T(r,s, r-d)$, and attach the $(-2)$ curve on the left. For this type $T$ singularity, we have 
\begin{align}
\frac{rs(r-d) -1 }{r^2 s} = [e_1, \dots, e_k].
\end{align}
So to determine what the new cyclic singularity is, we have
\begin{align}
\begin{split}
\frac{q}{p}  = [2, e_1, \dots, e_k]
=   \frac{1}{ 2 - \frac{rs(r-d) -1 }{r^2 s}} = \frac{r^2 s}{1 + d r s + r^2 s}.
\end{split}
\end{align}
So this singularity is of type $\frac{1}{1 + d r s + r^2 s} (1, r^2 s)$.
\begin{proposition}
\label{q1p1}
We have 
\begin{align}
q^{-1;p} &= d s r + d^2 s - 1\\
\eta\Big( \frac{1}{p}(1,q) \Big) &=\frac{1}{3}  \frac{ s ( -1 + d^2 + 2 d r + 2 r^2 - r(d+r)s}{1 + d r s + r^2 s}.
\end{align}
\end{proposition}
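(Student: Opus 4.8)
The plan is to verify the two claimed formulas by direct computation, using the continued fraction description established just above and the $\eta$-invariant formula \eqref{etacyclic}. Recall we have written the type $T$ string as $T(r,s,r-d)$, so that
\begin{align}
\frac{rs(r-d)-1}{r^2 s} = [e_1,\dots,e_k],
\end{align}
and the new singularity obtained by prepending a $(-2)$-curve is $\frac{1}{p}(1,q)$ with $p = 1 + drs + r^2 s$ and $q = r^2 s$, as derived in the paragraph preceding the statement. The length of the modified Euclidean algorithm for the new string is $k+1$, and its self-intersection numbers are $(2, e_1, \dots, e_k)$.

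First I would compute $q^{-1;p}$, i.e., the inverse of $r^2 s$ modulo $1 + drs + r^2 s$. The cleanest route is to check directly that $(r^2 s)(drs + d^2 s - 1) \equiv 1 \pmod{1 + drs + r^2 s}$. Expanding the product gives $dr^3 s^2 + d^2 r^2 s^2 - r^2 s$; modulo $p = 1 + drs + r^2 s$ one replaces $r^2 s \equiv -1 - drs$ repeatedly. A short calculation: $dr^3 s^2 = drs \cdot r^2 s \equiv drs(-1 - drs) = -drs - d^2 r^2 s^2$, and $d^2 r^2 s^2 = d^2 s \cdot r^2 s \equiv d^2 s(-1-drs) = -d^2 s - d^3 r s^2$; also $-r^2 s \equiv 1 + drs$. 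Summing these three contributions and simplifying should collapse to $1$ modulo $p$ (with the higher-order terms cancelling against further reductions); since $0 \le drs + d^2 s - 1 < p$ under the standing assumption $1 \le d \le r-1$, this identifies $q^{-1;p}$ uniquely. I would double-check the range bound, since that is what pins down the representative.

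Next I would compute $\eta\big(\frac{1}{p}(1,q)\big)$ using \eqref{etacyclic} with $k$ replaced by $k+1$ and $\sum e_i$ replaced by $2 + \sum_{i=1}^k e_i$. By Proposition \ref{importprop} applied to the type $T(r,s,r-d)$ singularity (whose dual graph is the reverse of that of $T(r,s,d)$, hence has the same $\ell$ and the same sum of self-intersection numbers), we have $k = \ell = r + s - 2$ and $\sum_{i=1}^k e_i = 3r + 2s - 4$. Substituting,
\begin{align}
\eta\Big(\tfrac{1}{p}(1,q)\Big) = \frac{1}{3}\Big( (3r + 2s - 2) + \frac{q^{-1;p} + q}{p} \Big) - (r + s - 1),
\end{align}
and then plugging in $q = r^2 s$, $q^{-1;p} = drs + d^2 s - 1$, and $p = 1 + drs + r^2 s$, one combines over the common denominator $p$ and simplifies. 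The expected main obstacle is purely algebraic bookkeeping: the numerator $3(3r+2s-2)p - 3(r+s-1)\cdot 3p + (q^{-1;p}+q)$ must be massaged into $s(-1 + d^2 + 2dr + 2r^2 - r(d+r)s)$, and keeping track of the many monomials in $r$, $s$, $d$ without sign errors is the delicate part; I would organize the computation by collecting coefficients of powers of $s$. Everything else — the continued fraction step, the congruence check, the application of Proposition \ref{importprop} — is routine.
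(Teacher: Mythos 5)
Your proposal follows essentially the same approach as the paper: for the inverse you verify the congruence $(r^2s)(dsr+d^2s-1)\equiv 1\pmod p$ (the paper does this as the single identity $r^2s(dsr+d^2s-1)-1 = (-1+drs)(1+drs+r^2s)$, which is cleaner than your iterative reduction — and note your displayed sum $1 - d^2 r^2 s^2 - d^2 s - d^3 rs^2$ still needs one more step, namely recognizing $d^2 r^2 s^2 + d^2 s + d^3 rs^2 = d^2 s \cdot p$, to actually collapse to $1$), check the range of the representative, and then compute $\eta$ from \eqref{etacyclic} with $k\mapsto k+1$ and $\sum e_i \mapsto 2+\sum e_i$ via Proposition \ref{importprop}. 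The only other blemish is a coefficient slip in your expanded numerator ($3p$ vs.\ $9p$ in the two terms), but the strategy and all the key inputs coincide with the paper's proof.
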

\begin{proof}
A simple computation shows that 
\begin{align}
r^2 s( d s r + d^2 s -1) - 1 = (-1 + d r s ) (1 + d r s + r^2 s).
\end{align}
Note also that $1 \leq d s r + d^2 s -1 < r^2s + d r s  + 1$. 

Next, using Proposition \ref{importprop} we have 
\begin{align}
\begin{split}
\eta( \Gamma) 
& =  \frac{1}{3}\Big( 3r + 2 s - 4 + \tilde{2}  +\frac{r^2 s  + d s r + d^2 s -1}{1 + d r s + r^2 s }\Big)- (r + s -2 + \tilde{1})\\
& =\frac{1}{3}  \frac{ s ( -1 + d^2 + 2 d r + 2 r^2 - r(d+r)s}{1 + d r s +  r^2 s}.
\end{split}
\end{align}
Note that the $\tilde{2}$ and $\tilde{1}$ terms are there because we added a single $(-2)$ curve.
\end{proof}

Next, we will blow-down the Type $T$ singularity, and let $X$ denote the corresponding $\QQ$-Gorenstein smoothing,
which exists by \cite{BehnkeChristophersen, KollarShepherd}.
\begin{proposition}
We have
\begin{align}
\mathcal{C}(X) = \frac{4 - d^2 s}{1 + d r s + r^2 s}.
\end{align}
\end{proposition}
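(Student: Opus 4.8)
The plan is to read off $\mathcal{C}(X)$ directly from formula \eqref{csnac}, so all that is needed is the length $k$ of the Hirzebruch--Jung algorithm for the relevant cyclic singularity, the sum $\sum_{i=1}^{k}e_i$ of self-intersection numbers, the second Betti number $b_2(X)$ of the $\QQ$-Gorenstein smoothing, and the arithmetic data $p,q,q^{-1;p}$. The first three come from the dual-graph description of the chain $(2,e_1,\dots,e_k)$, and the last is Proposition~\ref{q1p1}.

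First I would record the combinatorics. Writing the type $T$ string as $T(r,s,r-d)$, Proposition~\ref{importprop} gives that its minimal resolution has $r+s-2$ curves with $\sum e_i = 3r+2s-4$; prepending one $(-2)$-curve produces a chain of $k=r+s-1$ curves with $\sum_{i=1}^{k}e_i = 3r+2s-2$, hence $\sum_{i=1}^{k}(e_i-2) = (3r+2s-2)-2(r+s-1) = r$. As computed in the paragraph preceding the statement, this chain resolves the cyclic singularity $\frac1p(1,q)$ with $p = 1+drs+r^2s$ and $q = r^2s$, and Proposition~\ref{q1p1} gives $q^{-1;p} = dsr+d^2s-1$.

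Next I would pin down $b_2(X)$. In forming $X$ we contract only the type $T$ singularity and then smooth it $\QQ$-Gorenstein: its smoothing contributes $s-1$ to the second Betti number (the non-Artin component has associated space with $b_2 = s-1$, as recorded in the discussion of type $T$ cyclic quotient singularities), while the extra $(-2)$-curve persists in $X$. By Mayer--Vietoris, $b_2(X) = 1 + (s-1) = s$, so $k - b_2(X) = (r+s-1)-s = r-1$. Substituting into \eqref{csnac},
\begin{align}
\mathcal{C}(X) &= 2 + (k-b_2(X)) - \sum_{i=1}^{k}(e_i-2) + \frac{2 - q^{-1;p} - q}{p}\\
&= 2 + (r-1) - r + \frac{2 - (dsr + d^2s - 1) - r^2 s}{1+drs+r^2s}\\
&= 1 + \frac{3 - dsr - d^2s - r^2 s}{1+drs+r^2s} = \frac{4 - d^2 s}{1+drs+r^2s}.
\end{align}

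There is no genuine obstacle here; the only point that requires care is the bookkeeping for $b_2(X)$, namely recognizing that the appended $(-2)$-curve is not part of the singularity being smoothed and so raises the second Betti number by exactly $1$ relative to the bare type $T$ case, which is precisely what makes the correction $k-b_2(X)$ equal $r-1$ rather than $r$. As a consistency check one may instead combine $\mathcal{C}(X) = 2 - b_2(X) + 2/p - 3\,\eta(S^3/\Gamma)$ with the expression for $\eta$ in Proposition~\ref{q1p1}; clearing denominators yields the same value $\frac{4-d^2s}{1+drs+r^2s}$.
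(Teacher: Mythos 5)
Your proof is correct and reaches the paper's formula, but by a mildly different route. The paper simply substitutes the closed-form $\eta$-invariant from Proposition~\ref{q1p1} into $\mathcal{C}(X) = 2 - b_2(X) + \tfrac{2}{|\Gamma|} - 3\eta(S^3/\Gamma)$ (the route you relegate to a consistency check), whereas you work with the raw Hirzebruch--Jung combinatorics through formula \eqref{csnac}, using Proposition~\ref{q1p1} only for $q^{-1;p}$ and not for the simplified $\eta$-formula. The advantage of your version is that the combinatorial bookkeeping collapses to the clean identity $\sum_{i=1}^{k}(e_i-2) = r$ and $k - b_2(X) = r-1$, which sidesteps the somewhat more involved rational-function simplification carried out in the paper's computation of $\eta$; the paper's version is shorter given that $\eta$ has already been computed. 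Both are correct, and your bookkeeping of $b_2(X) = s$ (type $T$ smoothing contributing $s-1$, plus one for the appended $(-2)$-curve) matches the paper's.
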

\begin{proof} 
The $\eta$-invariant was determined in the previous proposition, since the group at infinity is the same. 
Note also that $b_2(X) = s -1 + 1 = s$, since the smoothing of the type $T$ singularity contributes $s-1$ and
the $(-2)$ curves donates another $1$ to this. 
We then have 
\begin{align}
\begin{split}
\mathcal{C}(X) &= 2 - s + \frac{2}{1 + d r s + r^2 s} - \frac{ s ( -1 + d^2 + 2 d r + 2 r^2 - r(d+r)s}{1 + d r s + r^2 s}
=  \frac{4 - d^2 s}{1 + d r s + r^2 s}.
\end{split}
\end{align}
\end{proof}
Clearly, for this to be positive, we require $d = 1$, in which case we have
\begin{align}
\mathcal{C}(X) = \frac{4 - s}{1 +  r s + r^2 s},
\end{align}
which is positive for $s = 1, 2, 3$. 
Note that from Proposition~\ref{q1p1}, the group at infinity is equivalent to 
\begin{align}
\Gamma = \frac{1}{1 + rs + r^2 s} (1, s(r+1) -1),
\end{align}
which yields the following.
\begin{theorem}
\label{xt1}
Let $\Gamma \subset {\rm{U}}(2)$ be any of the following groups for $r\geq 2$
\begin{align}
\tag{1} \Gamma &= \frac{1}{r^2+r+1}(1,r)\\
\tag{2} \Gamma &= \frac{1}{2 r^2+2r+1}(1,2r+1)\\
\tag{3} \Gamma &= \frac{1}{3r^2 + 3r + 1} (1, 3 r +2).
\end{align}
There is a non-Artin component $\JJ_i$ of the versal deformation space $\CC^2/\Gamma$ with $b_2 = i$ in Case $(i)$, $i = 1 ,2, 3$ which has $\mathcal{C}(\JJ_i) > 0$. 
\end{theorem}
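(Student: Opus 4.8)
The plan is to obtain Theorem~\ref{xt1} as the $d=1$, $s=i$ specialization of the construction carried out in the preceding subsection (``Add a single $(-2)$-curve to a type $T$''), so no new analytic input is required. First I would recall the output of that construction: starting from the type $T$ string written as $T(r,s,r-d)$ and attaching a single $(-2)$-curve on the left produces the cyclic quotient singularity $\frac{1}{1+drs+r^2s}(1,r^2s)$, together with the distinguished $P$-resolution obtained by blowing the type $T$ string down to its type $T_{s-1}$ point while retaining the $(-2)$-curve. By \cite{KollarShepherd} and \cite{BehnkeChristophersen}, this $P$-resolution admits a $\QQ$-Gorenstein smoothing, which via versality corresponds to a component of the versal deformation space of $\CC^2/\Gamma$; I will call it $\JJ_i$ once $s=i$ is fixed.

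Next I would do the bookkeeping. Setting $d=1$, the discussion just preceding the statement of Theorem~\ref{xt1} (using Proposition~\ref{q1p1}) identifies the group at infinity as $\Gamma=\frac{1}{1+rs+r^2s}(1,s(r+1)-1)$; substituting $s=1,2,3$ yields exactly the three families $\frac{1}{r^2+r+1}(1,r)$, $\frac{1}{2r^2+2r+1}(1,2r+1)$, $\frac{1}{3r^2+3r+1}(1,3r+2)$ of Cases $(1),(2),(3)$. For the second Betti number, the $(-2)$-curve survives in the smoothing and contributes $1$, while the $\QQ$-Gorenstein smoothing of a type $T_{s-1}$ point contributes $s-1$, so a smooth fiber $X$ has $b_2(X)=s=i$. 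Finally the formula derived in that subsection (evaluated at $d=1$) gives
\begin{align}
\mathcal{C}(\JJ_i)=\mathcal{C}(X)=\frac{4-s}{1+rs+r^2s},
\end{align}
which is strictly positive precisely for $s\in\{1,2,3\}$, i.e.\ in each of the three cases.

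The only step that needs genuine care, rather than arithmetic, is verifying that $\JJ_i$ is honestly non-Artin, i.e.\ distinct from the Artin component of $\CC^2/\Gamma$. By Artin--Wahl, the Artin component is the unique component admitting a simultaneous resolution, so its smooth fibers are diffeomorphic to the minimal resolution $\widetilde{X}$; but the minimal resolution of $\frac{1}{1+rs+r^2s}(1,s(r+1)-1)$ has the string $(2,e_1,\dots,e_k)$ with $k=r+s-2$ type $T$ curves (Proposition~\ref{importprop}), hence $b_2(\widetilde X)=r+s-1>s=b_2(X)$ since $r\geq 2$. Therefore $\JJ_i$ cannot be the Artin component, and together with $b_2(X)=i$ this pins it down as the asserted non-Artin component with $\mathcal{C}(\JJ_i)>0$, completing the proof.
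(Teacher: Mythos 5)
Your proposal is correct and follows essentially the same route the paper takes: the theorem is read off by evaluating the preceding subsection's calculations at $d=1$, $s=i$ (the group identification via $q^{-1;p}=s(r+1)-1$, the formula $\mathcal{C}(X)=\frac{4-s}{1+rs+r^2s}$, and $b_2(X)=s$), and the paper states the theorem immediately after these computations. Your explicit non-Artin check — comparing $b_2(X)=s$ with $b_2(\widetilde X)=r+s-1$ of the minimal resolution and invoking the Artin--Wahl characterization of the Artin component by simultaneous resolution — is a small but worthwhile addition that the paper leaves implicit in its reliance on the Koll\'ar--Shepherd-Barron correspondence between components and $P$-resolutions.
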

Note the first case is a $M$-resolution, but the second and 
third cases are $P$-resolutions, but not $M$-resolutions. 
The dual graphs of the minimal resolutions in these cases look like the following.

For $s = 1, r \geq 2 : ( \overbrace{2, \dots, 2}^{r-1}, r+2)$.

For $s = 2, r \geq 2: ( \overbrace{2, \dots, 2}^{r-1}, 3, r+1)$.

For $s = 3, r \geq 2: ( \overbrace{2, \dots, 2}^{r-1}, 3, 2,  r+1)$.

\subsection{Add two $(-2)$-curves to a type $T$}
We will 
write the type $T$ string as $T(r,s, r-d)$, with dual graph $(e_1, \dots, e_k)$,  
and attach the two $(-2)$ curves on the left. 
To determine $p$ and $q$, we have
\begin{align}
\begin{split}
\frac{q}{p}  &= [2, 2, e_1, \dots, e_k]
=  \cfrac{1}{ 2 - \cfrac{1}{ 2 - \cfrac{rs(r-d) -1 }{r^2 s}}} = \frac{1 + d r s + r^2 s}{2 + 2 d r s + r^2 s}.
\end{split}
\end{align}
So this singularity is of type $\frac{1}{2 + 2d  r s + r^2 s} (1, 1 + d r s + r^2 s)$.
\begin{proposition}
\label{q1p2} 
We have 
\begin{align}
q^{-1;p} &= d s r + 2 d^2 s -1 \\
\eta\Big( \frac{1}{p}(1,q) \Big) & =\frac{1}{3}  \frac{ s ( -2 + 2 d^2 + 2 d r + r^2 - r(2 d+r)s)}{2 + 2 d r s +  r^2 s}.
\end{align}
\end{proposition}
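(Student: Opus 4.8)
The plan is to mimic exactly the computations already carried out in the proofs of Propositions \ref{importprop}, \ref{q1p1}, and \ref{q1p2}'s predecessor. We have already established that attaching two $(-2)$-curves on the left to the type $T$ dual graph $(e_1,\dots,e_k)$ for the singularity $T(r,s,r-d)$ produces the cyclic singularity of type $\frac{1}{p}(1,q)$ with $p = 2 + 2drs + r^2 s$ and $q = 1 + drs + r^2 s$. So the two things left to verify are the formula for $q^{-1;p}$ and the formula for $\eta\big(\frac{1}{p}(1,q)\big)$.

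First I would verify the inverse. The claim is $q^{-1;p} = dsr + 2d^2 s - 1$, so I would check directly that $q \cdot (dsr + 2d^2 s - 1) \equiv 1 \pmod{p}$. Concretely, I would expand the product $(1 + drs + r^2 s)(dsr + 2d^2 s - 1)$ and reduce modulo $2 + 2drs + r^2 s$; the expected identity is of the form $(1 + drs + r^2 s)(dsr + 2 d^2 s - 1) - 1 = (\text{something}) \cdot (2 + 2 d r s + r^2 s)$, analogous to the identity $r^2 s(dsr + d^2 s - 1) - 1 = (-1 + drs)(1 + drs + r^2 s)$ used in Proposition \ref{q1p1}. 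I would also record the range bound $1 \le dsr + 2d^2 s - 1 < p$ (using $1 \le d \le r-1$, $r \ge 2$, $s \ge 1$) so that this is genuinely the representative of the inverse mod $p$ in the standard range. This step is pure arithmetic with no conceptual content.

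Then I would compute the $\eta$-invariant. Here I would use formula \eqref{etacyclic} together with the data from Proposition \ref{importprop}: for the type $T(r,s,r-d)$ part, the minimal resolution has $\ell = r + s - 2$ curves with $\sum e_i = 3r + 2s - 4$, and the two appended $(-2)$-curves contribute an extra $\tilde 2 = 4$ to the sum of self-intersection numbers and an extra $\tilde 2$ to the count of curves (so $k = r + s - 2 + 2$). Plugging into $\eta(S^3/\Gamma) = \frac{1}{3}\big(\sum_{i=1}^k e_i + \frac{q^{-1;p} + q}{p}\big) - k$ with $q^{-1;p} + q = (dsr + 2d^2 s - 1) + (1 + drs + r^2 s) = 2drs + 2d^2 s + r^2 s$ and $p = 2 + 2drs + r^2 s$, I would simplify the resulting rational expression and check it equals $\frac{1}{3}\cdot\frac{s(-2 + 2d^2 + 2dr + r^2 - r(2d+r)s)}{2 + 2drs + r^2 s}$. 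The main (only) obstacle is bookkeeping: making sure the contribution of the appended $(-2)$-curves to both $\sum e_i$ and to $k$ is counted consistently, exactly as the $\tilde 2$, $\tilde 1$ notation in Proposition \ref{q1p1} handled the one-curve case; once that is pinned down the algebra collapses as in the earlier propositions.
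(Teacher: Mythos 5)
Your proposal matches the paper's proof essentially verbatim: the paper also verifies $q^{-1;p}$ by checking the explicit identity $(1 + drs + r^2 s)(dsr + 2d^2 s - 1) - 1 = (-1 + d^2 s + drs)(2 + 2drs + r^2 s)$ together with the range bound, and then plugs $\sum e_i = 3r + 2s - 4 + \tilde{4}$, $k = r + s - 2 + \tilde{2}$, and $q^{-1;p} + q = 2drs + 2d^2 s + r^2 s$ into \eqref{etacyclic} and simplifies. Your bookkeeping of the $(-2)$-curve contributions ($+4$ to $\sum e_i$, $+2$ to $k$) is exactly the paper's $\tilde{4}$, $\tilde{2}$ notation.
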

\begin{proof}
A simple computation shows that 
\begin{align}
(1 + d r s + r^2 s)( d s r + 2 d^2 s -1) - 1 = (-1 + d^2 s + d r s ) (2 + 2 d r s + r^2 s).
\end{align}
Note also that $1 \leq d s r + 2 d^2 s -1 < r^2s + 2 d r s  + 2$. 

Next, using Proposition \ref{importprop}, we have 
\begin{align}
\begin{split}
\eta( \Gamma) 
& =  \frac{1}{3}\Big( 3r + 2 s - 4 + \tilde{4}  +\frac{r^2 s  + d r s + 1 + drs + 2 d^2 s -1 }{2 + 2 d r s + r^2 s }\Big)- (r + s -2 + \tilde{2})\\
& =\frac{1}{3}  \frac{ s ( -2 + 2 d^2 + 2 d r + r^2 - r(2 d+r)s)}{2 + 2 d r s +  r^2 s}.
\end{split}
\end{align}
Note that the $\tilde{4}$ and $\tilde{2}$ terms are there because we added a two $(-2)$ curves.
\end{proof}

Next, we will blow-down the Type $T$ singularity, and let $X$ denote the corresponding $\QQ$-Gorenstein smoothing,
which exists by \cite{BehnkeChristophersen, KollarShepherd}.
\begin{proposition}
We have
\begin{align}
\mathcal{C}(X) = \frac{4 - 2 d^2 s}{2 + 2 d r s + r^2 s}.
\end{align}
\end{proposition}
\begin{proof} 
The $\eta$-invariant was determined in Proposition \ref{q1p2}, since the group at infinity is the same.
Also, $b_2(X) = s -1 + 2 = s +1 $, since the smoothing of the type $T$ singularity contributes $s-1$ and
the $(-2)$ curves donate another $2$ to this. Then 
\begin{align}
\begin{split}
\mathcal{C}(X) &= 2 - (s + 1) + \frac{2}{2 + 2 d r s + r^2 s} 
-  \frac{ s ( -2 + 2 d^2 + 2 d r + r^2 - r(2 d+r)s)}{2 + 2 d r s +  r^2 s}\\
& =  \frac{4 - 2 d^2 s}{2 + 2 d r s + r^2 s}.
\end{split}
\end{align}
\end{proof}
Clearly, for this to be positive, we require $d = 1$, in which case we have
\begin{align}
\mathcal{C}(X) = \frac{4 - 2 s}{2 + 2 r s + r^2 s},
\end{align}
which is only positive for $s = 1$. 
Also, by Proposition \ref{q1p2}, the group at infinity is 
\begin{align}
\Gamma = \frac{1}{2  + 2 r + r^2 s} (1, r+1  ),
\end{align}
which yields the following.
\begin{theorem}
\label{xt2}
Let $\Gamma \subset {\rm{U}}(2)$ be any of the following groups for $r\geq 2$
\begin{align}
 \Gamma = \frac{1}{r^2+2r+2}(1,r+1).
\end{align}
Then there is a non-Artin component $\JJ_k$ of the versal deformation space $\CC^2/\Gamma$ with $b_2 = 2$ 
which has $\mathcal{C}(\JJ_k) > 0$. 
\end{theorem}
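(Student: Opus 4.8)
The proof is entirely parallel to that of Theorem~\ref{xt1}, now using the ``add two $(-2)$-curves to a type $T$'' construction of the preceding subsection with the parameter choices $d = s = 1$ and $r \geq 2$ arbitrary. First I would start from the type $T_0$ cyclic quotient singularity $T(r,1,r-1)$, whose dual graph $(e_1,\dots,e_k)$ is the string produced by the iterative procedure of Proposition~\ref{TTprop} beginning from $(-4)$, and attach two $(-2)$-curves on the left to obtain the string $(2,2,e_1,\dots,e_k)$. By the continued-fraction computation already carried out above, this string is the dual graph of the minimal resolution of $\CC^2/\Gamma$ with $\Gamma = \frac{1}{2 + 2r + r^2}(1, 1 + r + r^2)$; since $q^{-1;p} = dsr + 2d^2 s - 1 = r+1$ by Proposition~\ref{q1p2}, this group is equivalent to $\frac{1}{r^2 + 2r + 2}(1, r+1)$, which is precisely the group in the statement.

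Next, recalling the deformation theory from Section~\ref{versal}: contracting the type $T_0$ singularity in this configuration yields a normal surface germ which admits a one-parameter $\QQ$-Gorenstein smoothing by \cite{BehnkeChristophersen, KollarShepherd}, and this smoothing corresponds to an irreducible component $\JJ_k$ of the versal deformation space of $\CC^2/\Gamma$. The component $\JJ_k$ is non-Artin: the Artin component is the unique component admitting a simultaneous resolution, whereas the germ being smoothed here contains a genuine type $T$ singularity. For a smooth fiber $X$ of $\JJ_k$ one has $b_2(X) = (s-1) + 2 = 2$, since smoothing the $T_0$ singularity contributes $s - 1 = 0$ to the second Betti number while the two $(-2)$-curves persist.

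Finally, to check the positivity $\mathcal{C}(\JJ_k) > 0$, I would specialize the formula $\mathcal{C}(X) = \frac{4 - 2 d^2 s}{2 + 2 d r s + r^2 s}$ derived just above to $d = s = 1$, giving
\begin{align}
\mathcal{C}(\JJ_k) = \mathcal{C}(X) = \frac{2}{r^2 + 2r + 2} > 0
\end{align}
for every $r \geq 2$. (Equivalently, with $d = s = 1$ the $\eta$-invariant of Proposition~\ref{q1p2} vanishes, and then $\mathcal{C}(X) = 2 - b_2(X) + \frac{2}{|\Gamma|} - 3\eta(S^3/\Gamma) = \frac{2}{r^2+2r+2}$.) Since all of the required arithmetic — the identification of the group at infinity, the value of $q^{-1;p}$, and the $\eta$-invariant — is supplied by the propositions immediately preceding, there is no substantial obstacle; the only point requiring a moment's care is verifying that the two added $(-2)$-curves survive the $\QQ$-Gorenstein smoothing and hence genuinely contribute $2$ to $b_2(X)$, which is exactly what forces $s = 1$ (for $s \geq 2$ one would have $\mathcal{C}(X) = \frac{4 - 2s}{2 + 2rs + r^2 s} \leq 0$).
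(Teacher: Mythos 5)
Your proposal is correct and follows the same route as the paper: attach two $(-2)$-curves to the type $T_0$ string $T(r,1,r-1)$, apply the continued-fraction identification $\frac{1}{2+2r+r^2}(1,1+r+r^2) \sim \frac{1}{r^2+2r+2}(1,r+1)$ via Proposition~\ref{q1p2}, observe that the corresponding $\QQ$-Gorenstein smoothing of the $M$-resolution gives a non-Artin component with $b_2 = 2$, and specialize the formula $\mathcal{C}(X) = \frac{4-2d^2s}{2+2drs+r^2s}$ to $d=s=1$. All steps and computations agree with what the paper does in the ``add two $(-2)$-curves'' subsection.
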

The dual graph of the minimal resolution of the $M$-resolution in these cases looks like the following. 

For $r \geq 2 : ( \overbrace{2, \dots, 2}^{r}, r+2)$.

\subsection{Add three $(-2)$-curves to a type $T$}

We write the type $T$ string as $T(r,s, r-d)$, with dual graph $(e_1, \dots, e_k)$, 
and attach the three $(-2)$ curves on the left. 
To determine $p$ and $q$ we have
\begin{align}
\begin{split}
\frac{q}{p}  &= [2,2,2,e_1, \dots, e_k]
=   \cfrac{1}{ 2 - \cfrac{1}{ 2 - \cfrac{1}{2 - \frac{rs(r-d) -1 }{r^2 s}}}} = \frac{2 + 2 d r s + r^2 s}{3 + 3 d r s + r^2 s}.
\end{split}
\end{align}
So this singularity is of type $\frac{1}{3 + 3 d  r s + r^2 s} (1, 2 + 2 d r s + r^2 s)$.
\begin{proposition}
\label{q1p3} 
We have 
\begin{align}
q^{-1;p} &= d s r + 3 d^2 s -1\\
\eta\Big( \frac{1}{p}(1,q) \Big) & =\frac{1}{3}  \frac{ - 2 - s( 3 -3 d^2  + r(3 d+r)s)}{3 + 3 d r s +  r^2 s}.
\end{align}
\end{proposition}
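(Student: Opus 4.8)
The plan is to follow verbatim the template of Propositions~\ref{q1p1} and~\ref{q1p2}, since the new cyclic singularity here, of type $\frac{1}{p}(1,q)$ with $p = 3 + 3drs + r^2s$ and $q = 2 + 2drs + r^2 s$, is obtained from the type~$T$ string $T(r,s,r-d)$ by prepending three $(-2)$-curves, exactly parallel to the one- and two-curve cases.

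First I would establish the formula for $q^{-1;p}$ by a direct algebraic check. With $p$ and $q$ as in the display preceding the statement, one expands the product
\begin{align*}
q\,(dsr + 3d^2 s - 1) - 1 = (drs + 2d^2 s - 1)(3 + 3drs + r^2 s),
\end{align*}
which shows $q\cdot(dsr + 3d^2 s - 1) \equiv 1 \pmod p$. It then remains to verify the normalization $1 \le dsr + 3d^2 s - 1 < p$: since by Remark~\ref{revrem} we may assume $1 \le d \le r-1$, the lower bound is immediate from $r \ge 2$, $s \ge 1$, and the upper bound follows from $3d^2 s \le 3drs < 4 + 2drs + r^2 s$ using $d < r$. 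This identifies $dsr + 3d^2 s - 1$ as the representative of $q^{-1}$ in $\{1,\dots,p-1\}$.

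Next I would compute $\eta$ using formula~\eqref{etacyclic}. The relevant dual graph is $(2,2,2,e_1,\dots,e_\ell)$, where $(e_1,\dots,e_\ell)$ is the dual graph of $T(r,s,r-d)$. By Proposition~\ref{importprop}, the type~$T$ part contributes $\ell = r+s-2$ curves with $\sum_{i=1}^{\ell}e_i = 3r+2s-4$; prepending three $(-2)$-curves gives total length $k = r+s+1$ and $\sum e_i = 3r+2s+2$. Plugging $q^{-1;p} + q = 1 + 3drs + 3d^2 s + r^2 s$ and these values into~\eqref{etacyclic}, the non-fractional part $\tfrac13(3r+2s+2) - (r+s+1)$ combines with $\tfrac13\cdot\tfrac{q^{-1;p}+q}{p}$; clearing the denominator $3 + 3drs + r^2 s$, the numerator collapses to $-2 - s(3 - 3d^2 + rs(3d+r))$, which is the claimed expression.

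I expect no genuine obstacle: the argument is entirely analogous to Propositions~\ref{q1p1} and~\ref{q1p2}, and the only points requiring care are (i) the correct evaluation of the continued fraction $[2,2,2,e_1,\dots,e_\ell]$, already carried out in the display preceding the statement, and (ii) accurately tracking that prepending three $(-2)$-curves adds $3$ to $k$ and $6$ to $\sum e_i$ — the $+6$ entering inside~\eqref{etacyclic} against the $+3$ in the subtracted $k$. The final collapse of the numerator is a routine polynomial identity, best checked by expansion.
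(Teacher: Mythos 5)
Your proposal is correct and follows essentially the same route as the paper: the same algebraic identity verifying $q \cdot (dsr+3d^2s-1) \equiv 1 \pmod p$ together with the range check $1 \le dsr+3d^2s-1 < p$, and then substitution into \eqref{etacyclic} using Proposition~\ref{importprop} with the $+6$ correction to $\sum e_i$ and $+3$ to $k$ from the three prepended $(-2)$-curves. The only cosmetic difference is that you supply an explicit chain of inequalities for the upper bound on $q^{-1;p}$, whereas the paper states the bound without elaboration.
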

\begin{proof}
A simple computation shows that 
\begin{align}
(2 + 2 d r s + r^2 s)( d s r + 3 d^2 s -1) - 1 = (-1 + 2 d^2 s + d r s ) (3 + 3 d r s + r^2 s).
\end{align}
Note also that $1 \leq d s r + 3 d^2 s -1 < r^2s + 3 d r s  + 3$. 

Next, using Proposition \ref{importprop}, we have
\begin{align}
\begin{split}
\eta( \Gamma) 
& =  \frac{1}{3}\Big( 3r + 2 s - 4 + \tilde{6}  +\frac{ 2 + 2 d r s + r^2 s + dsr + 3d^2s -1}{3 + 3 d r s + r^2 s }\Big)- (r + s -2 + \tilde{3})\\
& =\frac{1}{3}  \frac{ - 2 - s( 3 -3 d^2  + r(3 d+r)s)}{3 + 3 d r s +  r^2 s}.
\end{split}
\end{align}
Note that the $\tilde{6}$ and $\tilde{3}$ terms are there because we added a three $(-2)$ curves.
\end{proof}

Next, we will blow-down the Type $T$ singularity, and let $X$ denote the corresponding $\QQ$-Gorenstein smoothing,
which exists by  \cite{BehnkeChristophersen, KollarShepherd}.
\begin{proposition}
We have
\begin{align}
\mathcal{C}(X) = \frac{4 - 2 d^2 s}{2 + 2 d r s + r^2 s}.
\end{align}
\end{proposition}
\begin{proof} 
The $\eta$-invariant term was determined in Proposition \ref{q1p3}, since the group at infinity is the same.
Also, $b_2(X) = s -1 + 3 = s +2 $, since the smoothing of the type $T$ singularity contributes $s-1$ and
the $(-2)$ curves donate another $3$ to this. Then 
\begin{align}
\begin{split}
\mathcal{C}(X) &= 2 - (s + 2) + \frac{2}{3 + 3 d r s + r^2 s} 
-   \frac{ - 2 - s( 3 -3 d^2  + r(3 d+r)s)}{3 + 3 d r s +  r^2 s}\\
& =  \frac{4 - 3 d^2 s}{3 + 3 d r s + r^2 s}.
\end{split}
\end{align}
\end{proof}
Clearly, for this to be positive, we require $d = 1$, in which case we have
\begin{align}
\mathcal{C}(X) = \frac{4 - 3 s}{3 + 3 r s + r^2 s},
\end{align}
which is only positive for $s = 1$. 
By Proposition \ref{q1p3}, the group at infinity is equivalent to  
\begin{align}
\Gamma = \frac{1}{3  + 3 r + r^2 } (1, r+2  ),
\end{align}
which yields the following.
\begin{theorem}
\label{xt3}
Let $\Gamma \subset {\rm{U}}(2)$ be any of the following groups for $r\geq 2$
\begin{align}
 \Gamma = \frac{1}{r^2+3r+3}(1,r+2).
\end{align}
Then there is a non-Artin component $\JJ_k$ of the versal deformation space $\CC^2/\Gamma$ with $b_2 = 3$ 
which has $\mathcal{C}(\JJ_k) > 0$. 
\end{theorem}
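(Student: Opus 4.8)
The statement is the $s=3$ instance of the "add three $(-2)$-curves to a type $T$" construction, so the plan is to specialize the computations already carried out in Propositions~\ref{q1p3} and the subsequent proposition, exactly mirroring Theorems~\ref{xt1} and~\ref{xt2}. First I would start from the type $T_0$ singularity, i.e.\ take $s=3$ but with the type $T$ part having $s$-parameter equal to the relevant value; more precisely, following the pattern of the previous two subsections, one begins with a type $T$ string $T(r,s',r-d)$ and prepends three $(-2)$-curves, so that by the continued-fraction computation in the previous subsection the resulting cyclic quotient singularity is of type $\frac{1}{3+3drs'+r^2s'}(1,2+2drs'+r^2s')$, and $b_2(X)=s'-1+3=s'+2$. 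To land on $b_2=3$ we take $s'=1$; then the smoothing $X$ of the blown-down type $T$ singularity has $b_2(X)=3$, and by Proposition~\ref{q1p3} together with the $\mathcal{C}(X)$ computation we get $\mathcal{C}(X)=\frac{4-3d^2s'}{3+3drs'+r^2s'}=\frac{4-3d^2}{3+3dr+r^2}$, which is positive precisely when $d=1$.

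Next I would set $d=1$, $s'=1$ and simplify: $\mathcal{C}(X)=\frac{1}{3+3r+r^2}>0$ for all $r\geq 2$, and by Proposition~\ref{q1p3} the group at infinity is equivalent to $\Gamma=\frac{1}{r^2+3r+3}(1,r+2)$, which is exactly the group in the statement. The existence of the non-Artin component $\JJ_k$ with the stated $b_2$ is guaranteed by \cite{BehnkeChristophersen, KollarShepherd} applied to the type $T$ singularity we blew down (the type $T$ part contributes a non-Artin component to the versal deformation of $\CC^2/\Gamma$ whose $\QQ$-Gorenstein smoothings have $b_2 = s'-1+3 = 3$), and the value $\mathcal{C}(\JJ_k)=\mathcal{C}(X)$ is by definition (see Section~\ref{volume}). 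Since any two smooth fibers of $\JJ_k$ are diffeomorphic, $\mathcal{C}(\JJ_k)$ is well-defined and equals the value just computed.

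The only genuine content beyond bookkeeping is verifying the two identities in Proposition~\ref{q1p3} in this specialization — namely that $q^{-1;p}=dsr+3d^2s-1$ and the resulting $\eta$-formula — but these are already proved there in full generality, so here there is nothing new: I would just quote Proposition~\ref{q1p3} and the subsequent $\mathcal{C}(X)$ proposition with $s=1$, $d=1$. Thus the proof is essentially one line of substitution. I do not anticipate any real obstacle; the "hard part," such as it is, is simply making sure the claimed equivalence $\frac{1}{3+3r+r^2}(1,r+2)\sim\frac{1}{3+3r+r^2}(1,2+2r+r^2)$ holds, which follows from $(2+2r+r^2)(r+2)\equiv 1\pmod{3+3r+r^2}$ — a routine check of the same flavor as the computations in Propositions~\ref{q1p1} and~\ref{q1p2}. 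Hence:

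\begin{proof}[Proof of Theorem~\ref{xt3}]
This is the case $s=1$, $d=1$ of the construction in the preceding subsection. Prepending three $(-2)$-curves to the type $T$ string $T(r,1,r-1)$ produces, by the continued-fraction computation above, a cyclic quotient singularity of type $\frac{1}{3+3r+r^2}(1,2+2r+r^2)$, and $r^2(2+2r+r^2)-1 = (1+r)(3+3r+r^2)$ shows this is equivalent to $\frac{1}{r^2+3r+3}(1,r+2)$. The type $T$ singularity being blown down contributes, by \cite{BehnkeChristophersen, KollarShepherd}, a non-Artin component $\JJ_k$ of the versal deformation of $\CC^2/\Gamma$ whose $\QQ$-Gorenstein smoothing $X$ has $b_2(X) = (s-1)+3 = 3$. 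By Proposition~\ref{q1p3} with $s=d=1$ and the subsequent computation,
\begin{align}
\mathcal{C}(X) = \frac{4 - 3 d^2 s}{3 + 3 d r s + r^2 s} = \frac{1}{3 + 3r + r^2} > 0
\end{align}
for all $r \geq 2$. Since any two smooth fibers of $\JJ_k$ are diffeomorphic, $\mathcal{C}(\JJ_k) = \mathcal{C}(X) > 0$, as claimed.
\end{proof}
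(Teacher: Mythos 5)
Your approach is exactly the paper's: Theorem~\ref{xt3} is the conclusion of the computations in the ``add three $(-2)$-curves to a type $T$'' subsection, and the proof is just the specialization $s=1$, $d=1$ of Proposition~\ref{q1p3} and the subsequent $\mathcal{C}(X)$ proposition, combined with the existence of the non-Artin component via \cite{BehnkeChristophersen, KollarShepherd}. The only issue is a typo in your verification identity: you wrote $r^2(2+2r+r^2)-1 = (1+r)(3+3r+r^2)$, which is false (the left side is degree $4$ in $r$, the right side degree $3$; check $r=3$: $153-1=152 \neq 84$). The correct identity, which is the $s=d=1$ case of the displayed identity inside the proof of Proposition~\ref{q1p3} and also matches the modular statement $(2+2r+r^2)(r+2)\equiv 1\pmod{3+3r+r^2}$ in your plan, is $(r+2)(2+2r+r^2)-1 = (1+r)(3+3r+r^2)$. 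With that replacement the proof is correct and matches the paper.
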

The dual graph of the minimal resolution of the $M$-resolution in these cases is the following.  

For $r \geq 2 : ( \overbrace{2,\dots,2}^{r+1}, r+2)$. 

\subsection{Completion of proof of Theorems \ref{theorem1} and \ref{exthm2}}
\label{cop7}
All of the groups in Theorems \ref{theorem1} and \ref{exthm2} are cyclic 
groups. By Corollary \ref{cyclicthm1}, there exists an ALE SFK 
metric in some K\"ahler class, for any $J \in \JJ^M(i)$ away from the central fiber. 
By Subsection \ref{ACE}, and Theorems \ref{xt1}, \ref{xt2}, and \ref{xt3}, 
all cases in Theorems~\ref{theorem1} and~\ref{exthm2} satisfy
$\mathcal{C}(\JJ^M(i))>0$. By Subsection~\ref{volume}, assumption \ref{Vassump} is 
satisfied. By Corollary~\ref{corexist}, it follows that there exists an
ALE SFK metric in any K\"ahler class. 

\section{Conclusion}
\label{conclusion}
In this section, we give a family of examples which shows that smoothings of 
{\textit{non-minimal}} orbifolds can occur as limits of {\textit{minimal}} ALE scalar-flat K\"ahler surfaces. 
In particular, the moduli space of SFK ALE metrics exhibits new phenomena which do not occur in the hyperk\"ahler case $\Gamma \subset {\rm{SU}(2)}$. 

\begin{theorem} 
\label{non_minimal_examples}
There exists sequences $g_i$ of SFK ALE metrics on $\oo_{\CC P^1}(-n)$
with respect to complex structures $J_i$ in the Artin component 
of $\CC^2/ \Gamma$, where $\Gamma = \frac{1}{n}(1,1)$,  
such that 
\begin{align}
(\oo_{\CC P^1}(-n), g_i, J_i,x_i) \rightarrow (X_{\infty}, g_{\infty}, J_{\infty}, x_{\infty})
\end{align}
in the pointed Cheeger-Gromov sense to a limiting SFK ALE orbifold 
$(X_{\infty}, g_{\infty}, J_{\infty})$ such that the limit  $(X_{\infty}, J_{\infty})$ is 
birational to $(\CC^2/ \ZZ_n, J_{euc})$, but is not dominated by the minimal resolution. 
\end{theorem}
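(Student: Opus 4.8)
The plan is to construct the family of metrics by starting from an ALE scalar-flat Kähler orbifold which is a smoothing of a \emph{non-minimal} resolution, and then degenerating a family of genuinely smooth ALE SFK metrics on $\oo_{\CC P^1}(-n)$ toward it. Concretely, consider $\Gamma = \frac{1}{n}(1,1)$. Its versal deformation has only the Artin component, and the minimal resolution is $\oo_{\CC P^1}(-n)$, which carries the LeBrun negative-mass metrics (or, via \cite{CalderbankSinger}, a whole family of ALE SFK metrics parametrized by the Kähler class). First I would exhibit an ALE SFK \emph{orbifold} whose underlying space $X_\infty$ is birational to $\CC^2/\ZZ_n$ but requires a non-minimal resolution: the natural candidate is the orbifold obtained by partially resolving $\CC^2/\ZZ_n$ in a way that introduces an orbifold point together with an exceptional $\CC P^1$ of self-intersection other than $-n$ (for $n\geq 3$ one can, for instance, contract $\oo_{\CC P^1}(-1)$-type curves after first blowing up), so that $X_\infty$ itself contains a $(-1)$-curve after minimal resolution but not in the sense that makes it a blow-down of $\oo_{\CC P^1}(-n)$. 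The existence of an ALE SFK orbifold metric on such an $X_\infty$ follows from the Calderbank–Singer toric construction together with the observation in the proof of Corollary~\ref{cyclicthm1} that one may set certain boundary-segment lengths to zero to collapse chosen exceptional curves.

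Next I would run the smoothing/gluing machinery of Theorem~\ref{SFK_non_Artin} (the ALE Biquard–Rollin argument) in reverse direction: given the ALE SFK orbifold $(X_\infty, g_\infty, J_\infty)$, glue in a scaled Burns metric on $\oo_{\CC P^1}(-1)$ (equivalently, a small resolution bubble) at the orbifold point to produce, for each small $t>0$, a smooth ALE Kähler manifold $\X_t$ diffeomorphic to $\oo_{\CC P^1}(-n)$, equipped with a background Kähler form $\overline\omega_t$ with small scalar curvature. The implicit-function-theorem step of Theorem~\ref{SFK_non_Artin}, using the vanishing of decaying holomorphic vector fields on $\X_t$ from \cite[Proposition 3.3]{HanViaclovsky}, then perturbs $\overline\omega_t$ to a genuine ALE SFK metric $\omega_t$ on $\X_t$. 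By construction, as $t\to 0$, $(\X_t, g_t, J_t, x_t)$ converges in the pointed Cheeger–Gromov sense (with uniform ALE asymptotic rate) to $(X_\infty, g_\infty, J_\infty, x_\infty)$. I would then verify that the complex structure $J_t$ lies in the Artin component of $\frac{1}{n}(1,1)$ — this is immediate since $\X_t$ is a smoothing of a blow-down of $\CC^2/\ZZ_n$, hence birational to the Artin-component family, and $\oo_{\CC P^1}(-n)$ is Stein-minus-a-curve with $b_2 = 1$ matching the Artin component with $b_2(X)=1$ (noting here that $\Gamma=\frac1n(1,1)$ has dual graph $(-n)$ with $k=1$, so the minimal resolution already is $\oo_{\CC P^1}(-n)$).

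The key point to check — and the main obstacle — is that the limit $X_\infty$ is genuinely \emph{not} dominated by the minimal resolution $\oo_{\CC P^1}(-n)$, i.e. that there is no surjective bimeromorphism $\oo_{\CC P^1}(-n) \to X_\infty$. This is exactly the failure mode that Section~\ref{remove} (Lemma~\ref{no}) rules out \emph{under the minimality hypothesis on $X$}; here the subtlety is that $\oo_{\CC P^1}(-n)$ \emph{is} minimal, so one must check that the limit orbifold, while birational to $\CC^2/\ZZ_n$, has a minimal resolution $\widetilde{X_\infty}$ that is \emph{not} a blow-up of $\oo_{\CC P^1}(-n)$ — equivalently, $\widetilde{X_\infty}$ must itself contain a $(-1)$-curve whose contraction does \emph{not} land inside the birational class in a way compatible with domination, contradicting the conclusion of Theorem~\ref{CGCB}. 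The resolution of the apparent paradox is that Theorem~\ref{compactness} requires the non-collapsing bound $\mathcal V(g_i) > v$; for the family $g_t$ constructed above one must verify that $\mathcal V(g_t) \to 0$ as $t \to 0$, so that Theorem~\ref{compactness} does not apply and the limit is allowed to be singular with a non-dominating birational type. I would establish the volume collapse by a direct computation on the glued metric: the Burns bubble attached at scale $\epsilon(t) = t^{d/2}$ contributes a region of volume ratio $\to 0$ near the gluing neck, precisely because the exceptional $(-1)$-curve there is being crushed to a point at a scale where the ambient injectivity radius does not shrink proportionally. Concretely, for the example $\oo_{\CC P^1}(-n)$ with $n\geq 3$ one computes $\mathcal{C}(X_\infty) \le 0$ for the relevant non-minimal orbifold, which by the Proposition in Section~\ref{volume} is consistent with $\mathcal V(g_t)\to 0$; pinning down this inequality for the specific $X_\infty$ chosen, and confirming the non-domination at the level of dual graphs, is where the real work lies and is the step I expect to be most delicate.
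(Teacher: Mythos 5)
Your overall plan --- exhibit a toric ALE SFK orbifold birational to $\CC^2/\ZZ_n$ but not dominated by $\oo_{\CC P^1}(-n)$, then desingularize it by the ALE Biquard--Rollin gluing and pass to the limit --- matches the paper's strategy in broad outline. But the central technical step is incorrect as stated, and there are two further issues.

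\textbf{The bubble you propose is the wrong one.}
You suggest smoothing the orbifold singularity by ``gluing a scaled Burns metric on $\oo_{\CC P^1}(-1)$, equivalently a small resolution bubble.'' The orbifold point of the correct $X_\infty$ is not a smooth point: in the paper's construction one blows up $\oo_{\CC P^1}(-n)$ a total of $n-2$ times and contracts the chain $(2,\dots,2,n+1)$, producing a type $T_0$ cyclic quotient singularity of the form $\frac{1}{r^2}(1,rd-1)$ with $r = n-1$. Its link is $S^3/\Gamma'$ with $\Gamma'$ nontrivial, whereas the Burns metric has trivial group at infinity, so the two boundaries cannot be matched at all. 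More importantly, a Burns bubble is a \emph{resolution} bubble and contributes $b_2 = 1$; what is required is a $\QQ$-Gorenstein \emph{smoothing} bubble with $b_2 = 0$ (a rational homology ball) so that the glued manifold is diffeomorphic to $\oo_{\CC P^1}(-n)$ and, after perturbing to SFK, carries a Stein complex structure lying in the Artin component. These bubbles are the Ricci-flat ALE metrics on $\QQ$-Gorenstein smoothings of type $T_0$ singularities, i.e.\ the quotients of $A_k$-hyperk\"ahler metrics of \cite{Suvaina_ALE, Wright_ALE}; they are exactly what Theorem~\ref{SFK_non_Artin} is built around.

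\textbf{You omit the obstruction check.}
The orbifold $Z$ above is neither an $M$-resolution nor a $P$-resolution --- indeed it contains a $(-1)$-curve, which is precisely what makes it not dominated by $\oo_{\CC P^1}(-n)$ --- so Theorem~\ref{SFK_non_Artin} cannot simply be invoked. One must first prove that a one-parameter $\QQ$-Gorenstein deformation of $Z$ actually exists, and the paper does this via a local-to-global obstruction computation: $H^2(\hat X, T_{\hat X}(-\log D)) = 0$ for the analytic compactification $\hat X = Z \cup D$, established by Serre duality and an explicit formula for $K_{\widetilde{X}}$ in terms of exceptional curves and the fibers of the ruling. Without this vanishing there is no family $\Z \to \Delta$ to feed into the gluing machine.

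\textbf{The volume-collapse discussion is misguided, and non-domination is not delicate.}
You argue that one must show $\mathcal V(g_t)\to 0$ to ``resolve the paradox'' with Theorem~\ref{compactness}, and you claim $\mathcal C(X_\infty)\le 0$. Neither is needed nor correct in general: Theorem~\ref{compactness} is stated for a sequence with a \emph{fixed} complex structure $J$, while here the $J_t$ vary within the Artin component, so that theorem simply does not apply and there is no tension to resolve. In fact, for $n=3$ formula~\eqref{cxcyclic} gives $\mathcal C(\oo_{\CC P^1}(-3)) = 1 > 0$, so the uniform lower volume bound $\mathcal V(g_t) > v > 0$ \emph{does} hold, and yet the limit is a singular orbifold --- the degeneration is driven by the complex structures, not by volume collapse. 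Finally, you flag non-domination as the most delicate step; it is actually automatic from the construction. The minimal resolution $\widetilde{X_\infty}$ is $\oo_{\CC P^1}(-n)$ blown up $n-2 \geq 1$ times, hence contains a $(-1)$-curve; since any proper bimeromorphism from a smooth surface to $X_\infty$ factors through the minimal resolution, a domination $\oo_{\CC P^1}(-n)\to X_\infty$ would force a birational morphism $\oo_{\CC P^1}(-n)\to \widetilde{X_\infty}$, impossible as $\oo_{\CC P^1}(-n)$ is minimal. The genuinely hard part of the proof is the deformation-theory input, not the birational geometry.
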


\begin{proof}
For $n \geq 3$, take $\mathcal{O}(-n)$, perform the interated blowup which obtained from $(n-2)$ blow-ups starting on the $(-n)$-curve then blow-down all curves except for the $(-1)$-curve on the end, which yields a type $T_0$ singularity. The dual graphs are as follows. 

\vspace{2mm}
For $n =3 : (-1,-4)$.

\vspace{1mm}

For $n = 4: (-1,-2,-5)$. 

\vspace{1mm}

For $n \geq 5: (-1, \overbrace{2, \dots, 2}^{n-3}, n+1)$. 

\vspace{2mm}

For each $n\geq 3$, denote the blow-down space with a type $T_0$ singularity as $Z$. 
Notice that $Z$ is not an $M$-resolution. However, we will show next that the smoothing of the type $T_0$ singularity is unobstructed. 
The smoothing has $b_2 = 1$, and must lie in the Artin component. This is because
there are no non-Artin components for $n \neq 4$, and for $n=4$, the non-Artin component has $b_2 = 0$. 

Note that $Z$ is obtained by blow-ups of $\oo_{\CC P^1}(-n)$, and then blow-downs.  
Since $\oo_{\CC P^1}(-n)$ is toric, and each blow-up 
is at a point fixed by the torus action, it follows that $Z$ is toric. 
As in Subsection \ref{cop7}, by Calderbank-Singer's construction,
there exists a SFK ALE orbifold metric $g_0$ on $Z$. 
We will to apply the smoothing construction as we did in Section \ref{smoothing_method} to find the desired smooth SFK ALE metrics near this orbifold metric.

First we want to show that there is no local-to-global obstruction for the deformation of the quotient singularity.
Let $X = Z\cup D$ be the analytic compactification of $Z$, where $D$ is a $(+n)$-curve.
We want to smooth out the type $T_0$ singularity in $X$ while fixing the divisor $D$.
Denote $T_X = \sheafhom_{\oo_X}(\Omega^1,\oo_X)$ as the dual sheaf of the $(1,0)$-form sheaf on $X$, and denote
$T_X(-log(D))$ as the subsheaf of $TX$ where near each point of $D$, $T_X(-log(D))$ is generated by $(1,0)$-vectors tangent
to $D$. 
We have the following exact sequence
\begin{align}
\begin{split}
H^1(X,T_X(-log(D))) &\rightarrow Ext(\Omega^1(log(D)),\oo_X)\rightarrow \\
 & H^0(X,\sheafext^1_{\oo_X}(\Omega^1(log(D)),\oo_X))
\rightarrow H^2(X, T_X(-log(D)))
\end{split}
\end{align}
Following the proof of \cite[Theorem~2]{LeePark}, 
the obstruction to the deformation we want lies in $H^2(X,T_X(-log(D)))\simeq H^2(\widetilde{X},T_{\widetilde{X}}(-log(D+E)))
\simeq H^0(\widetilde{X},K_{\widetilde{X}}\otimes \Omega^1_{\widetilde{X}}(log(D+E)))$, 
where $\widetilde{X}$ is the minimal resolution of $X$,
$E = \cup_{j=0}^{n-3} E_j$ is union of the exception divisors resolved from the $T_0$-singularity, 
and the last isomorphism is due to Serre duality. The $E_j$ is ordered from the right to the left in the graph above, with 
$E_0\cdot E_0 = -(n+1), E_j\cdot E_j = -2$ for $1\leq j\leq n-3$. 
Note that $\widetilde{X}$ is obtained by blow-ups of the Hirzebruch surface $F_n$. Denote $F$ as the generic fiber, and $E'$ as the $(-1)$-curve
in the dual graph above. 
The canonical divisor can be represented as 
$K_{\widetilde{X}} = (n-2)F -2D + \sum_{j=1}^{n-3}(j E_j) + (n-2)E'$, and the divisor
$D = nF + \sum_{j=0}^{n-3}E_j + E'$.
By the definition of $\Omega^1_{\widetilde{X}}(log(D+E))$, it is a subsheaf of $\Omega^1_{\widetilde{X}}(D+E)$.
Then 
\begin{align}
\begin{split}
&h^0(\widetilde{X}, K_{\widetilde{X}}\otimes\Omega_{\widetilde{X}}(log(D+E))) \leq h^0(\widetilde{X},K_{\widetilde{X}},K_{\widetilde{X}}\otimes 
\Omega^1_{\widetilde{X}}(D+E))=\\
&h^0(\widetilde{X},((n-2)F-2D+\sum_{j=1}^{n-3}(jE_j)+(n-2)E')\otimes\Omega^1_{\widetilde{X}}(D+\sum_{j=0}^{n-3}E_j)) \\
&= h^0(\widetilde{X},\Omega_{\widetilde{X}}^1(-2F+\sum_{j=1}^{n-3}(jE_j)+(n-3)E'))
\leq h^0(\widetilde{X}\setminus (E\cup E'),\Omega_{\widetilde{X}}^1(-2F)) =0.
\end{split}
\end{align}
The last equality holds because $F$ can be a generic fiber, so the holomorphic section vanishes generically and thus vanishes everywhere.
This implies that there is no local-to-global obstruction for deformations of $X$ which preserve the divisor $D$.
The fixed divisor $D$ can be used to construct the deformation to the normal cone.
As a result,
there exists a deformation $\Z\rightarrow \Delta$, where $\Z_0 \simeq Z$ and $\Delta \subset \CC$, 
and each smooth fiber $\Z_t$ is a Stein manifold diffeomorphic to $\oo_{\CC P^1}(-n)$.
Then by using the argument as in Section \ref{non-Artin}, we can construct a family of SFK ALE metrics which degenerates to the orbifold metric
on $Z$.
\end{proof}

\bibliography{Han_Viaclovsky}
\end{document}